\setlist[enumerate]{label=\normalfont{(\roman*)}}
\newcommand{\sred}[1]{\textcolor{red}{\st{#1}}}
\newtheorem{theorem}{Theorem}[section]
\newtheorem{lemma}[theorem]{Lemma}
\newtheorem{proposition}[theorem]{Proposition}
\newtheorem{corollary}[theorem]{Corollary}
\newtheorem{claim}[theorem]{Claim}
\theoremstyle{definition}
\newtheorem{definition}[theorem]{Definition}
\newtheorem{question}[theorem]{Question}
\newtheorem{remark}[theorem]{Remark}
\newtheorem{exercise}[theorem]{Exercise}
\newtheorem*{notation}{Notation}
\newcommand{\pt}{\mathrm{pt}}
\newcommand{\Z}{\mathbb{Z}}
\newcommand{\CP}{\mathbb{C}P}
\newcommand{\RP}{\mathbb{R}P}
\let\int\relax
\newcommand{\int}{\mathring}
\newcommand{\boundary}{\partial}
\newcommand{\id}{\text{id}}
\newcommand{\into}{\hookrightarrow}
\newcommand{\D}{\mathcal{D}}
\newcommand{\K}{\mathcal{K}}
\newcommand{\CS}{\text{CS}}
\newcommand{\R}{\mathbb{R}}
\let\S\relax
\newcommand{\S}{\mathcal{S}}
\newcommand{\tri}{\mathcal{T}}
\title[Isotopies of surfaces in 4-manifolds via banded unlink diagrams]{Isotopies of surfaces in 4-manifolds via banded unlink diagrams}
\author[Mark Hughes, Seungwon Kim, Maggie Miller]{Mark Hughes, Seungwon Kim, and Maggie Miller}
\begin{document}
\maketitle

\begin{abstract}
In this paper, we study surfaces embedded in $4$-manifolds. We give a complete set of moves relating banded unlink diagrams of isotopic surfaces in an arbitrary $4$-manifold. This extends work of Swenton and Kearton-Kurlin in $S^4$. 
As an application, we show that bridge trisections of isotopic surfaces in a trisected $4$-manifold are related by a sequence of perturbations and deperturbations, affirmatively proving a conjecture of Meier and Zupan. We also exhibit several isotopies of unit surfaces in $\CP^2$ (i.e. spheres in the generating homology class), proving that many explicit unit surfaces are isotopic to the standard $\CP^1$. This strengthens some previously known results about the Gluck twist in $S^4$, related to Kirby problem 4.23.
\end{abstract}

\section{Introduction}\label{sec:introduction}

Knotted surfaces in 4-manifolds play an important role in smooth 4-dimensional topology, analogous to the part played by classical knots in 3-dimensional topology.  Much like in the 3-dimensional case, there are a number of surgery operations and invariants for a smooth 4-manifold $X^4$ which are defined in terms of embedded surfaces inside $X^4$.

Because of their importance in 4-manifold topology, it is useful to have concrete ways of describing these embedded surfaces and their isotopies.   When $X^4 = S^4$, there are several ways to describe embedded surfaces and isotopies between them.  These include broken surface diagrams with Roseman moves~\cite{roseman1998reidemeister}, motion picture presentations with movie moves~\cite{carter1993reidemeister,carter1998knotted}, and braid charts with chart moves~\cite{kamada19932,kamada2002braid}.

In this paper we consider two additional methods for describing surfaces in a $4$-manifold.  When the underlying 4-manifold is $S^4$,  a complete set of moves to describe isotopies of these surfaces has already been established.  We focus here on establishing complete sets of moves to describe surface isotopies in an arbitrary 4-manifold.

The first method we consider to describe a surface $\Sigma$ in a $4$-manifold $X^4$ is via \emph{banded unlink diagrams}. When $X^4=S^4$, this construction involves putting $\Sigma$ into Morse position with respect to a standard height function $h$ on $S^4$, and then encoding the index-0 and index-1 critical points of $h\vert_{\Sigma}$ as a classical unlink in $S^3$ with a collection of embedded bands attached (see Section~\ref{sec:banddiagrams} for a more detailed description).  In~\cite{yoshikawa1994enumeration}, Yoshikawa presents a set of moves on banded unlink diagrams for surfaces in $S^4$ which are realizable by isotopies of the underlying surface, and asks if these moves are sufficient to relate banded unlink diagrams of any pair of isotopic surfaces.  This question was affirmatively answered by Swenton~\cite{swenton2001calculus}, with an alternate proof being given by Kearton and Kurlin~\cite{kearton2008all}.

In this paper we study a generalization of banded unlink diagrams to embedded surfaces in an arbitrary 4-manifold $X^4$ equipped with a Morse function, where we encode the Morse function by a Kirby diagram $\mathcal{K}$.  We describe a set of moves on banded unlinks, called \emph{band moves}, which can be realized by isotopies of the underlying surface $\Sigma$.  These consist of Yoshikawa's original moves, as well as additional moves which describe the interaction of the surface $\Sigma$ with the handle structure on $X^4$.  The main theorem we prove is the following.
\theoremstyle{theorem}
\newtheorem*{thm:generalswenton}{Theorem~\ref{thm:generalswenton}}
\begin{thm:generalswenton}
Let $X^4$ be a smooth 4-manifold with Kirby diagram $\mathcal{K}$, and suppose that $\Sigma$ and $\Sigma'$ are embedded surfaces in $X^4$.  Let $(\mathcal{K},L,v)$ and $(\mathcal{K},L',v')$ be banded unlink diagrams for $\Sigma$ and $\Sigma'$ respectively.  Then $\Sigma$ and $\Sigma'$ are isotopic if and only if $(\K,L,v)$ can be transformed into $(\K,L',v')$ by a finite sequence of band moves.
\end{thm:generalswenton}

The second method we consider to represent an embedded surface $\Sigma$ is by using a \emph{bridge trisection} of $\Sigma$, which allows one to present $\Sigma$ in terms of intersections with a given trisection of the ambient manifold $X^4$.  Bridge trisections for surfaces in $S^4$ were introduced by Meier and Zupan in~\cite{meier2017bridge}, where they provide a stabilization/destabilization move which they prove is sufficient to relate any two bridge trisections of isotopic surfaces.  In~\cite{meier2018bridge} the same authors generalize this notion of bridge trisections to surfaces in an arbitrary 4-manifold $X^4$, and prove that every surface $\Sigma \subset X^4$ can be put into bridge trisected position with respect to any given trisection on $X^4$.  They similarly define a stabilization/destabilization move 
and conjecture that these moves are sufficient to relate any two bridge trisections of  isotopic surfaces in $X^4$. 
Using Theorem~\ref{thm:generalswenton}, we affirmatively answer this conjecture.  We give the relevant definitions and some exposition on trisections and bridge trisections in Section~\ref{sec:bridge}.


\newtheorem*{bridgetheorem}{Theorem~\ref{bridgethm}}
\begin{bridgetheorem}
Let $S$ and $S'$ be surfaces in bridge position with respect to a trisection $\tri$ of a closed $4$-manifold $X^4$. Suppose that $S$ is isotopic to $S'$. Then $S$ can be taken to $S'$ by a sequence of perturbations and deperturbations, followed by a $\tri$-regular isotopy.
\end{bridgetheorem}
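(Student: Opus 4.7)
The plan is to reduce the theorem to Theorem~\ref{thm:generalswenton} by passing from bridge trisections to banded unlink diagrams, applying the band-move calculus there, and then lifting each band move back to a sequence of perturbations, deperturbations, and $\tri$-regular isotopies on the bridge trisections. Given bridge trisections of $S$ and $S'$, I would extract corresponding banded unlink diagrams $(\K,L,v)$ and $(\K,L',v')$, invoke Theorem~\ref{thm:generalswenton} to obtain a finite sequence of band moves between them, and then verify that every band move on the list lifts.

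For the extraction step, a bridge trisection of $S$ splits it into a union of trivial disk systems, one in each piece of $\tri$. The trisection itself naturally gives rise to a Kirby diagram $\K$ for $X^4$. Taking the boundary of one disk system as an unlink $L$, and pushing another disk system down into the adjacent handlebody to produce a system of bands $v$ attached to $L$, yields a banded unlink diagram $(\K,L,v)$ for $S$. Because Theorem~\ref{thm:generalswenton} allows the Kirby diagram to be modified (by handle slides and diffeomorphisms) as part of the band moves, we may arrange both diagrams to be built over the same $\K$.

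The main content lies in the lifting step. A perturbation of a bridge trisection is a local stabilization introducing a canceling pair of intersections of $S$ with the central Heegaard surface, giving local flexibility at a chosen point. Yoshikawa's original band moves --- band slides, band swims, and local isotopies of bands --- are local on the banded-unlink level, so after enough perturbations each can be realized as a $\tri$-regular isotopy in a small neighborhood of the perturbed region, followed by deperturbation. The new handle-interaction moves, which slide bands across $1$- or $2$-handles of $\K$, are non-local on the banded unlink, but can be decomposed into sequences of perturbations that create arcs threading the relevant handle, followed by a $\tri$-regular isotopy that crosses the handle, followed by deperturbations.

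The main obstacle is the case-by-case verification required in the lifting step. Each band move --- Yoshikawa's moves, $1$-handle slides, $2$-handle slides, and moves arising from modifications of $\K$ --- must be realized as a concrete sequence of perturbations, deperturbations, and $\tri$-regular isotopies. The handle-interaction moves are the most delicate, since their decomposition into local steps requires careful perturbation to introduce enough bridges near the handle while maintaining the bridge-trisection structure throughout. A further subtlety is that the extraction step involves choices (how to project the bands, which disk system to dualize), so one must also verify that any two banded unlink diagrams arising from the same bridge trisection are related by band moves whose lifts reduce to perturbations and deperturbations --- a consistency check ensuring the reduction to Theorem~\ref{thm:generalswenton} is well-defined.
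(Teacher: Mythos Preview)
Your overall strategy matches the paper's: pass to banded unlink diagrams, apply Theorem~\ref{thm:generalswenton}, and lift each band move. But there are two substantive issues.

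First, a misreading: Theorem~\ref{thm:generalswenton} does \emph{not} allow modifications of the Kirby diagram $\K$; the band moves (cup/cap, band slide/swim, $2$-handle-band slide/swim, dotted circle slide) all leave $\K$ fixed. There is no issue of reconciling two different $\K$'s, because both banded unlink diagrams are built over the same $\K$ from the start --- $\K$ is induced by the trisection $\tri$ itself via Lemma~\ref{lemma44}. So your remark about ``moves arising from modifications of $\K$'' refers to moves that are not in the calculus and need not be handled.

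Second, and more seriously, you are missing the final alignment step. After all band moves have been realized by perturbations/deperturbations, you have arranged that the banded unlink $(\K,L,v)$ agrees with $(\K,L',v')$ up to isotopy in $E(\K)$ (and dotted circle slides). But such an isotopy need not respect the Heegaard splitting $H\cup_F H'$ coming from $\tri$, so the two bridge trisections are not yet related by a $\tri$-regular isotopy. The paper closes this gap by invoking a bridge-splitting uniqueness result (\cite{zupan2013bridge}, as in the proof of \cite[Theorem~1.6]{meier2017bridge}): any two bridge positions of the same banded link with respect to $H\cup_F H'$ are related by further perturbations/deperturbations and an isotopy preserving $F$ setwise. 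Without this step your argument stops at an isotopy of banded unlinks, not an equality of bridge trisections modulo the allowed moves.

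A smaller point of comparison: your treatment of the $2$-handle-band moves is more elaborate than needed. The paper observes that once the arc $z$ along which the slide or swim occurs has been isotoped into $H$ (possibly after perturbing $S$), the move fixes $S\cap(X_1\cap X_2)$ and $S\cap(X_1\cap X_3)$ and only moves $S\cap(X_2\cap X_3)$ within $X_2\cap X_3$ --- so it is already a $\tri$-regular isotopy, not a complicated perturbation sequence. The dotted circle slides are handled separately by a short argument showing they can all be postponed to the end of the band-move sequence, where they are absorbed into the final isotopy step.
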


As a separate application of Theorem~\ref{thm:generalswenton}, we focus on the case of unit surfaces in $\CP^2$. By Melvin~\cite{melvin1977thesis}, the study of unit surfaces is relevant to understanding the Gluck twist surgery of~\cite{gluck1961embedding}.  Melvin showed that the Gluck twist on a sphere $S\subset S^4$ yields $S^4$ again if and only if there is a diffeomorphism from the pair $(\CP^2,S\#\CP^1)$ to the pair $(\CP^2,\CP^1)$. See Section~\ref{sec:cp2}, where we give the relevant definitions and exposition, for more detail.

\newtheorem*{cp2theorem}{Theorem~\ref{cp2theorem}}
\begin{cp2theorem}
Let $F=S\#\CP^1\subset\CP^2$ be a genus-$g$ unit surface knot, where $S\subset S^4$ is an orientable surface that is $0$-concordant to a band-sum of twist-spun knots and unknotted surfaces. Then $F$ is isotopic to $\CP^1\#gT$, where $\CP^1\# gT$ indicates the standard $\CP^1$ trivially stabilized $g$ times.
\end{cp2theorem}

\pagebreak

\subsection*{Outline}\begin{itemize}
\item[] {\bf{Section~\ref{sec:levelsetsandisotopies} (Pages~\pageref{sec:levelsetsandisotopies} --~\pageref{endsec1}):}} We define horizontal-vertical position and some nice families of isotopies for surfaces in $4$-manifolds.
\item[] {\bf{Section~\ref{sec:banddiagrams} (Pages~\pageref{sec:banddiagrams} --~\pageref{endsec2}):}} We define banded unlink diagrams.
\item[] {\bf{Section~\ref{sec:swenton} (Pages~\pageref{sec:swenton} --~\pageref{endsec3}):}} We show that any surface in a $4$-manifold is described by a banded unlink diagram which is well-defined up to a certain set of moves.
\item[] {\bf{Section~\ref{sec:bridge} (Pages~\pageref{sec:bridge} --~\pageref{endsec4}):}} We show that a bridge trisection of a surface in a trisected $4$-manifold is unique up to perturbation.
\item[] {\bf{Section~\ref{sec:cp2} (Pages~\pageref{sec:cp2} --~\pageref{endsec5}):}} We consider many examples of surfaces in the generating homology class of $\CP^2$, and show explicitly that these examples are isotopic to $\CP^1$ (perhaps with trivial tubes attached if the original surface is of positive genus).
\end{itemize}

\subsection*{Remark} In an earlier version of this paper we claimed to have shown that all unit knots in $\CP^2$ were in fact smoothly isotopic to $\CP^1$, and hence that all Gluck twists on $S^4$ were standard (Theorems~1.4 and 1.2 respectively), though we discovered a gap in our proof of Proposition~3.1. In this earlier version, we used diagrammatic moves of banded unlinks in $\CP^2$ and applied  techniques from a recent draft by Kawauchi. In this present version we focus on these diagrammatic moves, fleshing out a complete set of moves relating diagrams of isotopic surfaces in a general $4$-manifold. The previous work in $\CP^2$ is now contained in Section~\ref{sec:cp2}.

\subsection*{Acknowledgements}
The authors would like to thank Alexander Zupan for interesting conversations on possible unknotting strategies in $\CP^2$.
The second author is supported by the National Institute for Mathematical Sciences (NIMS).
The third author is a fellow in the National Science Foundation Graduate Research Fellowship program,
under Grant No. DGE-1656466. She would like to thank her graduate advisor, David Gabai.

\section{Level sets and isotopies}\label{sec:levelsetsandisotopies}


\subsection{Banded links}  Our primary technique for studying embedded surfaces in a $4$-manifold will be to arrange them so that their intersections with the level sets of a given Morse function are composed of disjoint unions of embedded disks and \emph{banded links}.

More precisely, let $M^3$ be an oriented 3-manifold, and let $L \subset M$ be a link.  A \emph{band} $b$ for the link $L$ is the image of an embedding $\phi : I \times I \hookrightarrow M$, where $I = [-1,1]$, and $b \cap L = \phi (\{ -1,1\} \times I)$.  We call $\phi(I \times \{\tfrac{1}{2}\})$ the \emph{core} of the band $b$.  Let $L_b$  be the link defined by 
\[
L_b = (L \backslash \phi (\{-1,1\} \times I)) \cup \phi (I \times \{-1,1\}).
\]
Then we say that $L_b$ is the result of performing \emph{band surgery} to $L$ along $b$.  If $v$ is a finite family of pairwise disjoint bands for $L$, then we will let $L_v$ denote the link we obtain by performing band surgery along each of the bands in $v$.  We say that $L_v$ is the result of \emph{resolving} the bands in $v$.  The union of a link $L$ and a family of disjoint bands for $L$ is called a \emph{banded link}.   If $L$ is an unlink, we call the union of $L$ and a family of disjoint bands a \emph{banded unlink}.

\subsection{Horizontal and vertical sets} 
Now let $X^4$ denote a closed, oriented, $4$-manifold equipped with a self-indexing Morse function $h:X^4\to[0,4]$, where $h$ has exactly one index-$0$ critical point and one index-$4$ critical point. We will write $\K$ to denote the Kirby diagram of $X^4$ induced by $h$ (we explain this more precisely in Subsection~\ref{sec:diagrams}). 

In order to study $\Sigma \subset X^4$ via the level sets of $h$, it will be convenient to have a way of identifying subsets of distinct level sets $h^{-1}(t_1)$ and $h^{-1}(t_2)$.  Suppose then that $t_1\leq t_2$, and let $x_1, \ldots, x_p$ denote the critical points of $h$ which satisfy $t_1\leq h(x_j) \leq t_2$.  Let $X_{t_1,t_2}$ denote the complement in $X^4$ of the ascending and descending manifolds of the critical points $x_1, \ldots, x_p$.  Then the gradient flow of $h$ defines a diffeomorphism $\rho_{t_1,t_2}:h^{-1}(t_1) \cap X_{t_1,t_2} \rightarrow h^{-1}(t_2) \cap X_{t_1,t_2}$.

\begin{definition}
We call $\rho_{t_1,t_2}$ the \emph{projection of $h^{-1}(t_1)$ to $h^{-1}(t_2)$}.  Similarly, we call $\rho^{-1}_{t_1,t_2}$ the \emph{projection of $h^{-1}(t_2)$ to $h^{-1}(t_1)$}, which we likewise denote by $\rho_{t_2,t_1}$.  
 \end{definition}
 
Note that despite calling $\rho_{t_1,t_2}$ the projection from $h^{-1}(t_1)$ to $h^{-1}(t_2)$, it is only defined on the complement of the ascending and descending manifolds of the critical points that sit between $t_1$ and $t_2$.  These projection maps allow us to define local product structures away from the ascending and descending manifolds of the critical points of $h$.

\begin{definition}Let $W$ be a subset of $X^4$, and let $J$ either be the closed interval $[t_1,t_2]$ or the open interval $(t_1,t_2)$.  Then we say that $W$ is \emph{vertical on the interval $J$} if $W \subset X_{t_1,t_2}$ and if $\rho_{t,t'} (h^{-1}(t) \cap W)=h^{-1}(t') \cap W$ for all $t,t' \in J$.  
 \end{definition}

In Sections~\ref{sec:banddiagrams} and~\ref{sec:swenton}, we will construct isotopies of surfaces in $X^4$. In this paper, every isotopy of a surface will always extend to ambient isotopy. 
We generally write ``$f:F\times I\to X^4\times I$ is an isotopy'' with the understanding that there is in fact a smooth family of diffeomorphisms $g_s:X^4\to X^4$ with $g_0=\id$ so that $g_s\circ \operatorname{pr}_{X^4} \circ f(F\times 0)=f(F\times s)$ for $s\in[0,1]$.  Here we use $\operatorname{pr}_X:X^4 \times I \rightarrow X^4$ to denote projection to the first factor.

We consider a few special types of isotopy which behave well with respect to $h$.

\begin{definition}
Let $\Sigma\subset X^4$ be a smoothly embedded surface. Let $f: F\times I\to X^4\times I$ be a smooth isotopy of $\Sigma$, so that $\Sigma=f( F\times \left\{0\right\})$.   If the image of $\operatorname{pr}_X \circ f$ is disjoint from the critical points of $h$, then we say that $f$ is {\emph{$h$-disjoint}}.  

We say that $f$ is {\emph{horizontal}} with respect to $h$ if $h(\operatorname{pr}_X(f(x,s)))$ is independent of $s$ for all $x\in F$. We say that $f$ is {\emph{vertical}} with respect to $h$ if for each $x \in F$ the image of $\left\{x\right\} \times I$ under $\operatorname{pr}_X \circ f$ is contained in a single orbit of the flow of $\nabla h$.  Finally, we say that $f$ is \emph{$h$-regular} if for each $s\in I$, $h\vert_{\operatorname{pr}_X(F\times \{s\})}$ is Morse.  
(See Figure~\ref{fig:hzisotopies} for schematics of horizontal and vertical isotopies.)
\end{definition}

\begin{figure}
\includegraphics[width=\textwidth]{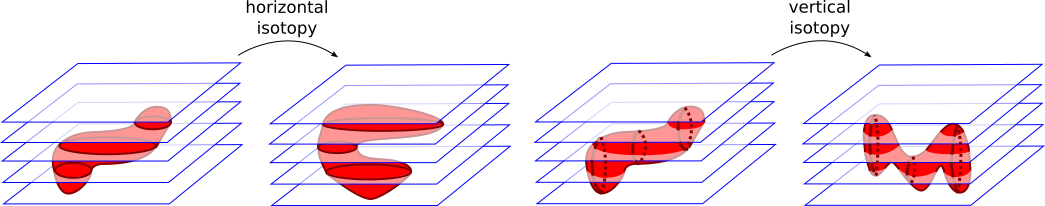}
\caption{The horizontal planes represent $h^{-1}(t)$ for various values of $t$. {\bf{Left:}} A horizontal isotopy of $\Sigma$ preserves $h\vert_\Sigma$ pointwise. {\bf{Right:}} A vertical isotopy of $\Sigma$ moves each $x\in\Sigma$ within a single orbit flow of $\nabla h$.}\label{fig:hzisotopies}
\end{figure}

Intuitively, one should think of $h$ as a height function whose level sets are horizontal. A horizontal isotopy of $\Sigma$ moves $\Sigma\cap h^{-1}(t)$ within $h^{-1}(t)$, preserving $h\vert_\Sigma$. A vertical isotopy of $\Sigma$ changes $h\vert_\Sigma$, but preserves the projection of $\Sigma$ onto each level set $h^{-1}(t)$.  We will usually just say that $f$ is horizontal or vertical, omitting ``with respect to $h$".

Note that a surface isotopy is generically $h$-disjoint. We will often explicitly require isotopies to be $h$-disjoint because we will be interested in how isotopy affects the projection of surfaces to $h^{-1}(3/2)$, which can be complicated when the isotopy takes the surface through a critical point. The definitions of horizontal and vertical isotopy naturally motivate a ``nice'' position of a surface embedded in $X^4$, if we are willing to allow the surface embedding to have corners.

\begin{definition}\label{hzdef}
Let $\Sigma\subset X^4$ be a PL embedded surface. We say that $\Sigma$ is in {\emph{horizontal-vertical position}} with respect to $h$ if there exists a set $T=\{t_1,\ldots,t_n\}$ disjoint from $\{0,1,2,3,4\}$, with $t_1<t_2<\cdots< t_n$, so that the following are true: 
\begin{itemize}
\item For each $1\leq i \leq n-1$ the surface $\Sigma$ is vertical on the interval $(t_i,t_{i+1})$.
\item For each $1\leq j \leq n$ the intersection $\Sigma\cap h^{-1}(t_j)$ consists of the disjoint union of a (possibly empty) banded link and a  (possibly empty) union of disjoint embedded disks.
\end{itemize}
In other words, $\Sigma$ is vertical away from a finite number of nonsingular level sets $h^{-1}(t_i)$, while it intersects the $h^{-1}(t_i)$ in a collection of horizontal disks and a banded link.  When $h$ is clear, we may simply say that $\Sigma$ is \emph{horizontal-vertical}.
\end{definition}

Note that from the definition of vertical, a horizontal-vertical surface must be disjoint from critical points of $h$. Note furthermore, that by an arbitrarily small perturbation in a neighborhood of the level sets $h^{-1}(t_1), \ldots , h^{-1}(t_n)$, a horizontal-vertical surface $\Sigma$ may be isotoped to a surface $\Sigma'$ with $h|_{\Sigma'}$ Morse.  This perturbation can be chosen so that each horizontal band of $\Sigma$ gives rise to a nondegenerate saddle point in $\Sigma'$, and each horizontal disk in $\Sigma$ gives rise to a nondegenerate maximum or minimum point in $\Sigma'$.  We will thus work largely with nonsmooth surfaces that are horizontal-vertical when constructing isotopies, with the understanding that they may be isotoped into smooth surfaces in Morse position as described above.


The following  classical theorem states that arbitrary surfaces can be put into horizontal-vertical position. This is critical to the study of surfaces embedded in $4$-manifolds. A proof for orientable surfaces is essentially contained in Section 2 of~\cite{kawauchi1982normalform}; the nonorientable case is covered in~\cite{kamada1989nonorientable}.

\begin{theorem}[\cite{kamada1989nonorientable,kawauchi1982normalform}]\label{normalform}
Let $\Sigma\subset X^4$ be a smoothly embedded surface so that $h\vert_\Sigma$ is Morse and $\Sigma$ is disjoint from the critical points of $h$. Then there is an $h$-disjoint and $h$-regular isotopy $f: F\times I\to X^4\times I$ with $f( F\times \{0\})=\Sigma$ so that $f$ is a concatenation of horizontal and vertical isotopies, $f( F\times \{1\})$ is horizontal-vertical, and $h\vert_{f( F\times \{s\})}$ is Morse for all $s\in[0,1)$.
\end{theorem}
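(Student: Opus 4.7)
I would prove this in three stages --- normalization, straightening, and flattening --- each realized by a concatenation of horizontal and vertical isotopies.

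In the first stage, I would normalize the critical values of $h|_\Sigma$. Since $h|_\Sigma$ is Morse with finitely many critical points $p_1,\ldots,p_m$ having critical values $s_1,\ldots,s_m$, I would apply an arbitrarily small vertical isotopy supported in pairwise disjoint neighborhoods of the $p_i$, moving each $p_i$ slightly along its gradient flow orbit, so that the $s_i$ become distinct and lie in $(0,4)\setminus\{1,2,3\}$. This perturbation preserves Morse-ness and keeps $\Sigma$ disjoint from the critical points of $h$. I would then fix regular values $\tau_0<s_1<\tau_1<\cdots<s_m<\tau_m$, also disjoint from $\{0,1,2,3,4\}$, so that each slab $h^{-1}([\tau_{i-1},\tau_i])$ contains exactly one critical point $p_i$ of $h|_\Sigma$.

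In the second stage, I would use horizontal isotopies to make $\Sigma$ vertical between critical levels. Pick $\epsilon>0$ small enough that $[s_i-\epsilon,s_i+\epsilon]\subset(\tau_{i-1},\tau_i)$ for each $i$. On each sub-slab of the form $[\tau_{i-1},s_i-\epsilon]$ or $[s_i+\epsilon,\tau_i]$, the surface $\Sigma$ is transverse to every level set and meets the sub-slab in a disjoint union of embedded annuli. Using the gradient flow of $h$ --- well-defined on a neighborhood of $\Sigma$ because $\Sigma$ avoids the critical points of $h$ --- I would construct a horizontal isotopy carrying each annulus onto the vertical cylinder $\bigcup_t \rho_{\tau_{i-1},t}(\Sigma\cap h^{-1}(\tau_{i-1}))$ by interpolating level-by-level through a smooth family of isotopies within the level sets. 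After this stage, $\Sigma$ is vertical on each interval $(\tau_{i-1},s_i-\epsilon)$ and $(s_i+\epsilon,\tau_i)$.

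In the third stage, I would put $\Sigma$ into standard horizontal-vertical form on each thin slab $[s_i-\epsilon,s_i+\epsilon]$. By the Morse lemma applied to $h|_\Sigma$, there are coordinates $(u,v)$ on $\Sigma$ near $p_i$ in which $h|_\Sigma=s_i+Q(u,v)$ for a nondegenerate quadratic form $Q$, which I would extend to coordinates on $X^4$ in which $h$ itself is a coordinate function. For a minimum or maximum, I would use a vertical isotopy to collapse the local paraboloidal graph onto a horizontal disk at a chosen regular value $t_i$ close to $s_i$, joined to the already-vertical exterior from stage two by a short vertical cylinder. For a saddle, I would combine a horizontal isotopy --- remodeling the level-set curves to conform to a PL band pattern --- with a vertical isotopy that flattens the smooth saddle into a horizontal band at level $t_i$ joined by vertical cylinders on both sides. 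A small adjustment at the interface levels $s_i\pm\epsilon$ reconciles the local models with the outside vertical surface.

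The principal obstacle is ensuring $h$-regularity throughout: the horizontal isotopies of stages two and three automatically preserve transversality of $\Sigma$ to every level set, so $h|_\Sigma$ remains Morse, but the vertical flattening isotopies of stage three can pass through surfaces with degenerate behavior with respect to $h$. I would handle this by choosing the vertical isotopies generically, using the $C^2$-density of Morse functions on $F$ to arrange that each intermediate embedding has only nondegenerate $h$-critical points --- allowing $h|_\Sigma$ to become non-Morse only at $s=1$ when the horizontal disks and bands appear, exactly as the theorem permits.
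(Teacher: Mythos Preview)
Your three-stage outline is essentially correct and close in spirit to the paper's argument (which it gives implicitly in the proof of Lemma~\ref{generictoband} rather than as a standalone proof of this theorem), but the organization differs in a way worth noting. The paper does not separate straightening and flattening into global stages; instead it first flattens all extrema, then processes the critical values $t_0<t_1<\cdots<t_n$ strictly from bottom to top: at each step it horizontally isotopes $\Sigma\cap h^{-1}(t_i,4]$ (fixing everything below) to make $\Sigma$ vertical on $(t_i,t_{i+1}-\epsilon)$, and only then handles the slab around $t_{i+1}$. This sequential scheme automatically avoids the interface-matching issue you defer to ``a small adjustment at the interface levels $s_i\pm\epsilon$'': in your scheme, since stage~2 moves the cross-sections at $s_i\pm\epsilon$ independently on the two sides, your stage~3 local models must be built to match those new, already-moved boundaries, which is doable but is exactly the bookkeeping the paper's ordering eliminates.

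The saddle case is also handled differently. Rather than invoking the Morse lemma on $\Sigma$ and extending $(u,v)$ to coordinates on $X^4$ in which $h$ is a coordinate, the paper projects $\Sigma\cap h^{-1}[t_i-\epsilon,t_i+\epsilon]$ to the single level $M_{t_i}$ (a pair of pants), contracts this uniquely up to isotopy to a $1$-complex $(\Sigma\cap M_{t_i-\epsilon})\cup E$, and then vertically isotopes so that $\Sigma\cap M_{t_i}$ is the banded link with a band along $E$ carrying the surface framing. This is more intrinsic: your proposed extension of Morse-lemma coordinates to $X^4$ with $h$ a coordinate need not be compatible with the given gradient flow of $h$, so the ``vertical'' isotopies you build from it may not be vertical in the paper's sense (moving each point along an orbit of $\nabla h$). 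The projection-and-contract description produces the band directly in $M_{t_i}$ and makes the required vertical isotopy manifest.
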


Both~\cite{kamada1989nonorientable} and~\cite{kawauchi1982normalform} consider only surfaces embedded in $S^4$ with the standard height function, but by applying the argument locally the theorem can be extended to surfaces in an arbitrary $4$-manifold $X^4$ with self-indexing Morse function $h$. We will not cite this theorem directly, but will implicitly prove this result in Lemma~\ref{generictoband}.
\label{endsec1}

\section{Banded unlink diagrams for surfaces in $4$-manifolds}\label{sec:banddiagrams}

\subsection{Banded unlink position}
For ease of notation we let $M_t = h^{-1}(t)$ denote the (possibly singular) 3-dimensional level set at height $t$ for each $t \in \mathbb{R}$.  By the projection maps $\rho_{t_1,t_2}$ we may identify subsets of distinct level sets $M_t$, provided they avoid the appropriate ascending and descending manifolds.  We will often make these identifications implicitly, so for example, we may think of a link $L$ as living in both $M_{t_1}$ and $M_{t_2}$ when there is no risk of confusion. 

\begin{definition}
We say that an embedded surface $\Sigma \subset X^4$ is in \emph{banded unlink position} if 
\begin{itemize}
\item $h(\Sigma) = [1/2,5/2]$,
\item $\Sigma$ is vertical on the intervals $(1/2,3/2)$ and $(3/2,5/2)$,
\item $\Sigma \cap M_{3/2}$ is a banded unlink disjoint from the descending manifolds of index-$2$ critical points of $h$, and 
\item $\Sigma \cap M_{1/2}$ and $\Sigma \cap M_{5/2}$ are finite collections of disjoint embedded disks.
\end{itemize}
\end{definition}

Letting $t$ denote the height coordinate on $X^4$ induced by $h$, we can describe a surface in banded unlink position by a movie in $t$ as follows.  Starting at $t=0$ and increasing, we first encounter a collection of minimal disks of $\Sigma$ at height $t=1/2$.   For $t\in(1/2,3/2)$, the intersection $\Sigma\cap M_t$ is an unlink in $M_t$, which we denote by $L$.  
The next feature we encounter are the index 1 critical points of $X^4$ at height $t=1$, which completes the 1-skeleton of $X^4$. 
As we continue upwards, at height $t=3/2$ a family $v$ of bands appear, attached to the link $L$.  Passing $t=3/2$, the resulting level set of $\Sigma$ becomes $L_v$, which is obtained from $L$ by resolving the bands $v$.  We then pass the index 2 critical points of $X$ at height $t=2$, before finally capping off the components of $L_v$ with maximal disks at height $t=5/2$.

Note that the link $L$ is necessarily an unlink in $M_t$ for $t \in (1/2,3/2)$ (i.e.\ it bounds a collection of disjoint embedded disks), and $L_v$ will be an unlink in $M_t$ for $t \in (2,5/2)$.

\subsection{Banded unlink diagrams}\label{sec:diagrams}

Surfaces in banded unlink position can be represented in terms of the associated Kirby diagram of $X^4$ via \emph{banded unlink diagrams}.  Suppose that the handle decomposition induced by $h$ on $X^4$ is represented by the Kirby diagram $\mathcal{K} \subset S^3$.  More precisely, $\mathcal{K}$ is a link $L_1 \sqcup L_2 \subset S^3$, where $L_1$ is an unlink, and each component of $L_2$ is labeled with an integer framing.  The components of $L_1$ are each decorated with a dot to distinguish them from the components of $L_2$, and each indicates a 1-handle attached to the 0-handle $B^4$ along $\partial B^4 = S^3$ as usual (the meridians of $L_1$ are cores of the $1$-handles in the handle decomposition of $X^4$).  The labeled components of $L_2$ each represent the framed attaching circle of a 2-handle attached to $X_1$.

Given such a Kirby diagram $\mathcal{K} \subset S^3$ for $X^4$, the sphere $S^3$ can be identified with $M_{1/2}$, while the 3-manifold obtained by performing 0-surgery to $S^3$ along $L_1$ can be identified with the level set $M_{3/2}$.  
After performing this surgery $L_2$ can again be thought of as a framed link in $M_{3/2}$, and we identify the result of performing Dehn surgery to $M_{3/2}$ along the components of $L_2$ (where the surgery coefficient of each component is specified by its framing) with $M_{5/2}$.  
Let $E(\K)$ denote the complement $S^3 \backslash (\nu(L_1) \cup \nu(L_2))$ of a small tubular neighborhood of $\mathcal{K} = L_1 \sqcup L_2$ in $S^3$.  Then given a link $L \subset E(\K)$ we may think of $L$ as describing links in $M_{1/2}$,  $M_{3/2}$ and $M_{5/2}$ in the obvious way. 

A \emph{banded unlink diagram} in the Kirby diagram $\mathcal{K}$ is a triple $(\mathcal{K}, L, v)$, where $L \subset E(\K)$ is a link and $v$ is a finite family of {\emph{disjoint}} bands for $L$ in $E(\K)$, such that $L$ bounds a family of pairwise disjoint embedded disks in $M_{1/2}$, and $L_v$ bounds a family of pairwise disjoint embedded disks in $M_{5/2}$. See Figure~\ref{fig:bandedunlink} for an example of a banded unlink diagram.

\begin{figure}
\includegraphics[width=.3\textwidth]{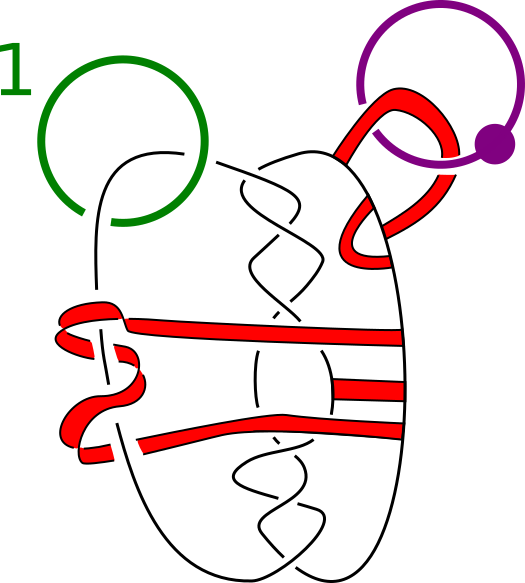}
\caption{A banded unlink in Kirby diagram $\K$ describing a torus $\Sigma$  smoothly embedded in $\CP^2\#(S^1\times S^3)$. The 2-component unlink in $E(\K)$ bounds two minima of $\Sigma$. Resolving the unlink along the four bands yields the boundary of two maxima of $\Sigma$. Then $\chi(\Sigma)=2-4+2=0$. One can check also that $\Sigma$ is orientable.}\label{fig:bandedunlink}
\end{figure}


A banded unlink diagram describes an embedded surface $\Sigma$ in banded unlink position as follows.  We first note that we can identify $E(\K)$ with a subset of $M_{3/2}$ in a natural way.  When fixing this identification, note that the intersection of $M_{3/2}$ with the descending manifolds (cores) of the 2-handles of $X^4$ can be thought of as the attaching circles of the 2-handles.  Hence, as our banded link $L\cup v$ sits in the complement of a tubular neighborhood of the attaching circles $L_2 \subset S^3$, it can be identified with a subset of $M_{3/2}$, which we denote by $L'\cup v'$, that misses the descending manifolds of the 2-handles of $X^4$.  

Now, as $L'$ is an unlink, we can apply a horizontal isotopy in $M_{3/2}$ if necessary so that $L'$ also avoids the ascending manifolds of the 1-handles of $X^4$.  We can thus extend $L'$ vertically downwards from $M_{3/2}$ to $M_{1/2}$, where it can be capped off by a family of disjoint embedded disks in $M_{1/2}$.  Similarly, we can extend the surgered link $L'_{v'}$ vertically upwards from $M_{3/2}$ to $M_{5/2}$, where it can be capped off. 
As these families of disks are unique up to isotopy rel boundary, the surface we obtain in this way from the banded unlink diagram $(\mathcal{K}, L, v)$ is well-defined up to isotopy.  We denote this surface by $\Sigma(\mathcal{K},L,v)$. 
We say that $(\K,L,v)$ {\emph{describes}} $\Sigma(\K,L,v)$, or that $(\K,L,v)$ is a {\emph{banded unlink diagram for}} $\Sigma(\K,L,v)$.


\subsection{Band moves}\label{sec:diagramforbandposition} We now proceed to describe a collection of moves which will allow us in Section~\ref{sec:swenton} to define banded unlink diagrams of arbitrary surfaces in $X^4$, and relate the banded unlink diagrams of any isotopic surfaces.  These moves are described in Figures~\ref{fig:oldbandmoves} and~\ref{fig:newbandmoves}.  They consist of \emph{cup} and \emph{cap} moves (top of Fig.~\ref{fig:oldbandmoves}), \emph{band slides} (middle of Fig.~\ref{fig:oldbandmoves}), \emph{band swims} (bottom of Fig.~\ref{fig:oldbandmoves}), \emph{2-handle-band slides} (top of Fig.~\ref{fig:newbandmoves}), \emph{dotted circle slides} (middle two rows of Fig.~\ref{fig:newbandmoves}) and \emph{2-handle-band swims} (bottom of Fig.~\ref{fig:newbandmoves}). 
These operations, together with isotopy in $E(\K)$, form a collection of moves which we refer to as \emph{band moves}. (Note that the dotted circle slide may actually move $L$ rather than a band, but we still refer to this as a band move for convenience.)  Band moves may transform a banded unlink diagram $(\mathcal{K},L,v)$ into a banded unlink diagram $(\K,L',v')$, though it is not difficult to verify that the surfaces $\Sigma(\K,L,v)$ and $\Sigma(\K,L',v')$ are isotopic.

\begin{figure}\includegraphics[width=.4\textwidth]{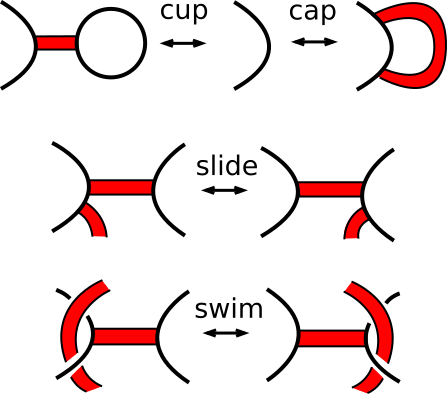}
\caption{The cup/cap, slide, and swim band moves. These band moves do not involve the $2$-handle attaching circles of $\K$. The cup/cap moves correspond to $0$-,$1$- or $2$-,$3$- stabilization/destabilization of a surface $\Sigma$ with respect to $h$. The slide move passes an end of one band along the length of a distinct band. The swim move passes a band lengthwise through the interior of a distinct band.}\label{fig:oldbandmoves}
\end{figure}

\begin{figure}\includegraphics[width=.7\textwidth]{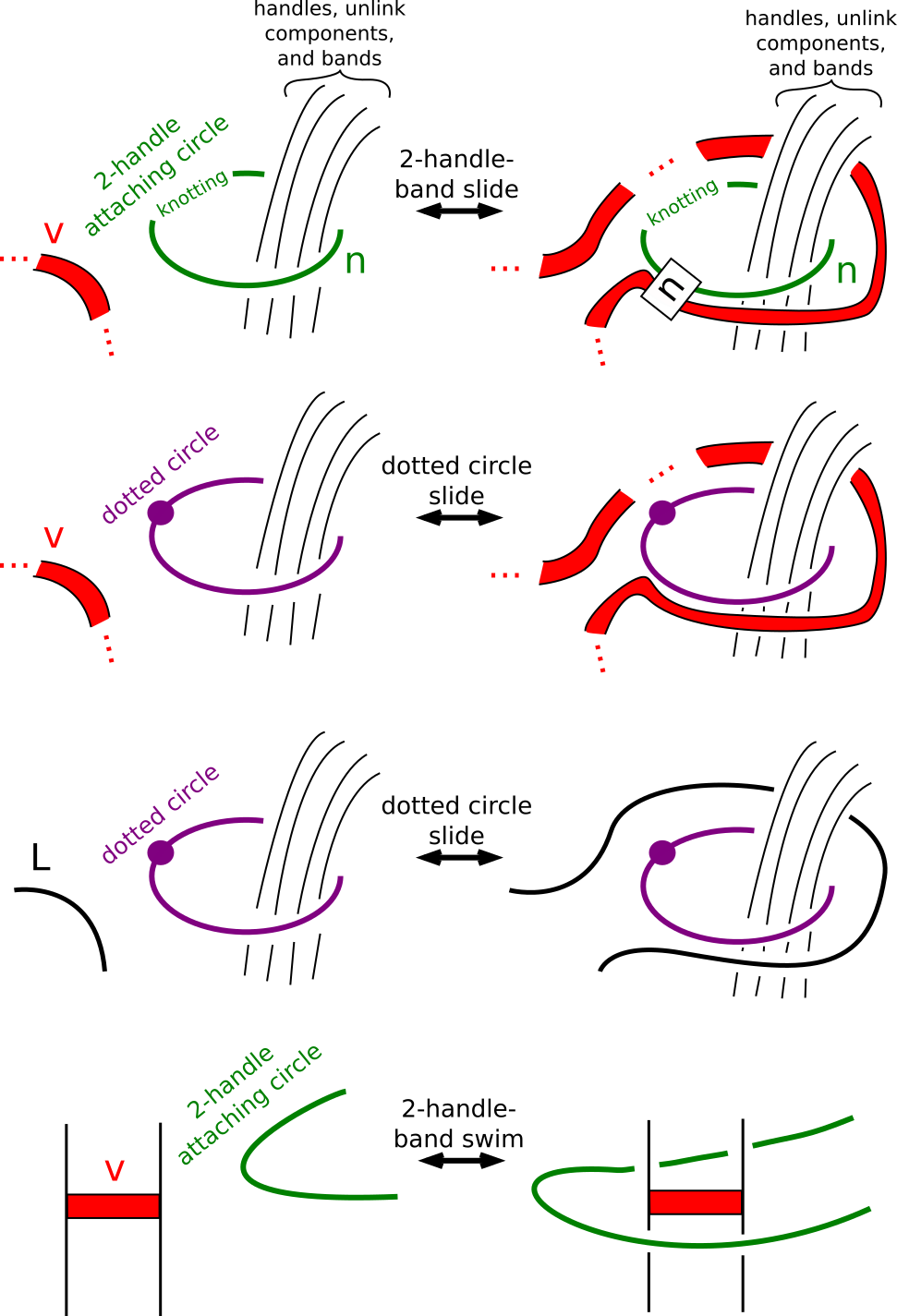}
\caption{The $2$-handle-band slide and $2$-handle-band swim moves. These band moves involve the $2$-handles attaching circles of $\K$. {\bf{Top:}} The $2$-handle-band slide move slides a band over a $2$-handle, following the usual rules of Kirby calculus. This schematic is meant to indicate that the $2$-handle attaching circle may be knotted and link arbitrarily with other circles in $\K$ or unlink or band components (including the band that slides). {\bf{Second row:}} A dotted circle slide may pass a band over a dotted circle, following the usual rules of Kirby calculus. {\bf{Third row:}} A dotted circle slide may pass the unlink $L$ over a dotted circle, following the usual rules of Kirby calculus. {\bf{Bottom:}} The $2$-handle-band swim move passes a $2$-handle attaching circle lengthwise through the interior of a band.}\label{fig:newbandmoves}
\end{figure}

\begin{notation}
We refer to the  slide, swim, 2-handle-band slide, dotted circle slide, 2-handle-band swim moves and isotopy in $E(\K)$ as {\emph{Morse-preserving band moves}}. The cup and cap moves are not Morse-preserving.
\end{notation}

\begin{lemma}\label{diagramfromband}
Let $\Sigma\subset X^4$ be a surface in banded unlink position. There is a procedure to obtain a banded unlink diagram $(\K,L_\Sigma,v_\Sigma)$ so that $L_\Sigma$ and $v_\Sigma$ are completely determined by the embedding $\Sigma\into X^4$. Moreover, $\Sigma$ is isotopic to $\Sigma':=\Sigma(\K,L_\Sigma,v_\Sigma)$.
\end{lemma}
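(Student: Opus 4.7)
The plan is to extract the banded unlink diagram directly from the middle level set and then verify the resulting diagram describes $\Sigma$ up to isotopy. Concretely, I set $L_\Sigma\cup v_\Sigma := \Sigma\cap M_{3/2}$ and argue this banded link lives canonically in $E(\K)$. Because $\Sigma$ is vertical on $(1/2,3/2)$, it avoids the ascending manifolds of all index-$1$ critical points, so $\Sigma\cap M_{3/2}$ sits in the region of $M_{3/2}$ identified with $S^3\setminus\nu(L_1)$ (the complement in $M_{3/2}$ of the meridional solid tori that replace $\nu(L_1)$ after the surgery). Combined with the banded-unlink-position hypothesis that $\Sigma\cap M_{3/2}$ is disjoint from the descending manifolds of index-$2$ critical points, which meet $M_{3/2}$ exactly along $L_2$, this gives $\Sigma\cap M_{3/2}\subset S^3\setminus(\nu(L_1)\cup\nu(L_2))=E(\K)$. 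Thus $L_\Sigma\cup v_\Sigma$ is a banded link in $E(\K)$ entirely determined by the embedding $\Sigma\into X^4$ (with $h$ and $\K$ fixed).

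Next I would check the two unlink conditions required of a banded unlink diagram. By the definition of banded unlink position, $\Sigma\cap M_{1/2}$ is a disjoint union of embedded disks in $M_{1/2}=S^3$. Verticality of $\Sigma$ on $(1/2,3/2)$ implies that the boundaries of these disks are exactly $\rho_{3/2,1/2}(L_\Sigma)$, so $L_\Sigma$ viewed in $S^3$ bounds pairwise disjoint disks and is therefore an unlink. The same argument applied to the upper interval $(3/2,5/2)$ shows $L_{\Sigma,v_\Sigma}$ bounds disjoint embedded disks in $M_{5/2}$, completing the verification that $(\K,L_\Sigma,v_\Sigma)$ is a banded unlink diagram.

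Finally I would show $\Sigma$ is isotopic to $\Sigma':=\Sigma(\K,L_\Sigma,v_\Sigma)$. By construction both surfaces have the same intersection with $M_{3/2}$, and both are vertical on $(1/2,3/2)$ and on $(3/2,5/2)$; since verticality forces the surface on each such interval to be determined by its middle slice via the gradient flow of $h$, $\Sigma$ and $\Sigma'$ coincide on $h^{-1}((1/2,5/2))$. They can therefore differ only in their cap disks at $M_{1/2}$ and $M_{5/2}$. In each case both cap systems bound the same unlink ($L_\Sigma$ in $S^3$, and $L_{\Sigma,v_\Sigma}$ in $M_{5/2}$), and a system of disjoint embedded disks bounding an unlink is unique up to ambient isotopy rel boundary in either $S^3$ or $M_{5/2}\isom\#_k(S^2\times S^1)$. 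These rel-boundary isotopies of the caps extend to an ambient isotopy of $X^4$ taking $\Sigma$ to $\Sigma'$.

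The only real subtlety is the inclusion $\Sigma\cap M_{3/2}\subset E(\K)$, which is where both verticality conditions do their work; once that identification is in place, the rest follows from classical uniqueness of disk fillings for unlinks in $3$-manifolds and the product structure supplied by the vertical regions.
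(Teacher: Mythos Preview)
Your overall strategy matches the paper's: read off $L_\Sigma\cup v_\Sigma$ from $\Sigma\cap M_{3/2}$, verify the two unlink conditions using the given cap disks, and then argue that $\Sigma$ and $\Sigma'$ agree on $h^{-1}((1/2,5/2))$ and differ only in their cap disks. The paper is terser about the inclusion $\Sigma\cap M_{3/2}\subset E(\K)$ than you are, so that part of your write-up is fine.

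The genuine gap is in your final step. You assert that a system of disjoint embedded disks bounding a fixed unlink is unique up to isotopy rel boundary in $M_{5/2}\cong\#_k(S^2\times S^1)$. This is false. Take $k=1$ and let the unknot be an equator of $S^2\times\{0\}$; it bounds the two hemispheres $D_+$ and $D_-$, and any isotopy rel boundary from $D_+$ to $D_-$ would produce a $3$--ball in $S^2\times S^1$ with boundary $D_+\cup D_-=S^2\times\{0\}$, which is impossible since that sphere is nonzero in $H_2(S^2\times S^1)$. So spanning-disk uniqueness fails already in the $3$--manifold.

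The paper sidesteps this by comparing the caps not in the $3$--manifold $M_{5/2}$ but in the $4$--manifold $h^{-1}([5/2,4])\cong\natural_k(S^1\times B^3)$ (and analogously $h^{-1}([0,1/2])\cong B^4$ on the bottom). There the two disk systems are boundary-parallel with equal boundary, and the spheres $D\cup D'$ that obstruct a $3$--dimensional isotopy now bound embedded $3$--balls in the $4$--manifold (since $\pi_2(\natural_k(S^1\times B^3))=0$), so the rel-boundary isotopy exists. Rewriting your last paragraph to invoke uniqueness of boundary-parallel disk systems in $h^{-1}([5/2,4])$ rather than in $M_{5/2}$ repairs the argument and brings it in line with the paper's proof.
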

\begin{proof}


Let $L_\Sigma=\Sigma\cap M_{3/2-\epsilon}$ and let $v_\Sigma$ be the bands of $\overline{(\Sigma\cap M_{3/2})\setminus L_{\Sigma}}$.
By definition of banded unlink position, $L_\Sigma$ bounds a system of disks in $M_{1/2}$ (e.g. $\Sigma\cap M_{1/2}$) and ${L_{\Sigma}}_{v_\Sigma}$ bounds a system of disks in $M_{5/2}$ (e.g. $\Sigma\cap M_{5/2}$). Since $\Sigma$ is in banded unlink position, $L_{\Sigma}\cup v_\Sigma$ is contained in $E(\K)$. Therefore, $(\K,L_\Sigma,v_\Sigma)$ is a banded unlink diagram.


Let $\Sigma':=\Sigma(\K,L_\Sigma,v_\Sigma)$. Then $\Sigma'$ is also in banded unlink position, with $\Sigma'\cap(1/2,5/2)=\Sigma\cap(1/2,5/2)$. 
Both $\Sigma\cap h^{-1}([0,1/2])$ and $\Sigma'\cap h^{-1}([0,1/2])$ are boundary-parallel disk systems with equal boundary in $h^{-1}([0,1/2])\cong B^4$, so are isotopic rel boundary. Similarly, $\Sigma\cap h^{-1}([5/2,4])$ and $\Sigma'\cap h^{-1}([5/2,4])$ are isotopic rel boundary in $ h^{-1}([5/2,4])\cong\natural(S^1\times B^3)$. Therefore, $\Sigma$ is isotopic to $\Sigma'$.
\end{proof}

\begin{remark}
In Lemma~\ref{diagramfromband}, we showed that if $\Sigma$ and $\Sigma'$ are surfaces  in banded unlink position in $X^4$, then the banded unlink diagrams $\D:=(\K,L_\Sigma, v_\Sigma)$ and $\D':=(\K,L_{\Sigma'},v_{\Sigma'})$ are well-defined. 
However, even if $\Sigma$ and $\Sigma'$ are isotopic, we have not shown that $\D$ and $\D'$ are related in any way.
\end{remark}
\label{endsec2}

\section{A calculus of moves on banded unlink diagrams}\label{sec:swenton}

%
%

\subsection{Overview}\label{sec:overview}

In what follows, let $\K_0$ denote the standard (empty) Kirby diagram induced by the standard height function on $S^4$. (The handle decomposition described by $\K_0$ has one $0$-handle, one $4$-handle, and no other handles).  When $\K=\K_0$, Swenton~\cite{swenton2001calculus} and Kearton-Kurlin~\cite{kearton2008all} show that the cup, cap, band slide, and band swim moves relate any two banded unlink diagrams of isotopic surfaces.

\begin{theorem}[\cite{swenton2001calculus,kearton2008all}\label{thm:swenton}]
Let $\Sigma$ and $\Sigma'\subset S^4$ be isotopic surfaces described by banded unlink diagrams $\D:=(\K_0,L,v)$ and $\D':=(\K_0,L',v')$ respectively. Then $\D'$ can be obtained from $\D$ by a finite sequence of cap/cup, band slides, band swims, and isotopies in $S^3$.
\end{theorem}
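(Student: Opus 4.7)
The plan is to analyze a generic one-parameter family of surfaces realizing the isotopy between $\Sigma$ and $\Sigma'$, and to match each codimension-one event in the family to one of the listed band moves. First I would apply Theorem~\ref{normalform} to place both $\Sigma$ and $\Sigma'$ in horizontal-vertical position, and then Lemma~\ref{diagramfromband} to recover banded unlink diagrams that agree with $\D$ and $\D'$ up to isotopy in $S^3$ (which is among the allowed moves). Given an ambient isotopy $g_s \colon S^4 \to S^4$ with $g_0=\id$ and $g_1(\Sigma)=\Sigma'$, set $\Sigma_s = g_s(\Sigma)$, and study $h|_{\Sigma_s}$ as a one-parameter family of functions on the abstract surface $F$.

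After a small generic perturbation (keeping the endpoints fixed), this family is $h$-disjoint and $h$-regular outside a finite set $s_1<\cdots<s_N$, and away from these times a slight vertical/horizontal adjustment as in the proof of Lemma~\ref{normalform} keeps $\Sigma_s$ in banded unlink position with its banded unlink diagram $\D_s$ changing only by ambient isotopy of the banded unlink in $M_{3/2}$, i.e.\ by isotopy in $S^3$. At each $s_i$ standard Cerf theory for functions on a surface asserts that generically exactly one of the following occurs: (a) a cancelling pair of critical points of adjacent Morse index is born or dies; (b) two critical points of $h|_{\Sigma_s}$ cross in height; or (c) the descending arc of one saddle transversally meets the ascending arc of another along a gradient trajectory. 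Each type of event is local, taking place inside a standard ball in $M_{3/2}$.

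The heart of the argument is the one-to-one match between these local events and the allowed moves. A birth of a $(0,1)$-cancelling pair is a cup move: it introduces a small unknot bounding a new minimum disk together with a band joining it to the existing link $L$. Symmetrically, a $(1,2)$-birth is a cap move, and deaths invert these. A Cerf crossing in case (b) between two saddles, after horizontal isotopy to a standard model, is exactly a band slide if one endpoint of a band slides along $L$ past another band's endpoint, and a band swim if the core of one band passes through the interior of another, which is the unique codimension-one interaction of two disjoint bands in a $3$-manifold. A saddle-extremum crossing from case (b) is absorbed by vertical isotopy (it merely re-orders critical values without changing the diagram). Case (c) is codimension one in the same sense: after a local change of flow, it reduces to an endpoint-past-endpoint slide. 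Throughout, unlinkness of $L$ and of $L_v$ is preserved, so the result remains a valid banded unlink diagram.

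The main obstacle is the case (b)/(c) analysis for two bands. The crossing is dictated by abstract Morse theory on $\Sigma$, but the band move lives in $M_{3/2}\cong S^3$, where the two bands and the unlink $L$ may be knotted and linked with each other. I would handle this by isolating a ball $B\subset M_{3/2}$ containing the changing flow line of $h|_{\Sigma_s}$ at the singular time, and showing that, up to isotopy in $S^3$ and previously-established moves, the configuration inside $B$ can be brought into a standard local model matching the schematic in Figure~\ref{fig:oldbandmoves}. A secondary subtlety is that the minimum and maximum disk systems in $M_{1/2}$ and $M_{5/2}$ are carried along by the isotopy but their global isotopy class is determined by the boundary link, so no extra moves are needed on them; this justifies the reduction to events in $M_{3/2}$. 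With these local replacements in place, stringing together the finitely many events produces the desired finite sequence of cup, cap, slide, swim, and $S^3$-isotopy moves carrying $\D$ to $\D'$.
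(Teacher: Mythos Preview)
Your overall framework---analyze a generic one-parameter family and match codimension-one events to band moves---is the right one, and is essentially what Kearton--Kurlin do (the paper cites this result rather than proving it, but its proof of Theorem~\ref{thm:generalswenton} specializes to reprove it). However, your identification of which events produce which moves is wrong, and this is a genuine gap.

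The central error is the claim that away from the singular times $s_i$ the diagram $\D_s$ changes only by isotopy in $S^3$. A generic surface has its saddles at \emph{distinct} heights, so the diagram is obtained by projecting all bands to the common level $M_{3/2}$; since the bands live at different heights, their projections can overlap, and producing an honest banded unlink requires choices of how to separate them (see Proposition~\ref{horizontaltoband} and Figure~\ref{fig:getdiagram}). As the isotopy runs through generic surfaces these projections move, and the instants when one band's projection crosses another are exactly where band slides and swims appear. This is an extrinsic event about the embedding in $S^4$, not an intrinsic Cerf event of $h|_{\Sigma_s}$; compare Lemma~\ref{lemma:simpleiso}(i), where isotopy through generic surfaces already produces all the Morse-preserving moves.

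Conversely, your cases (b) and (c) do not produce slides or swims. An $A_1^-A_1^-$ crossing means two saddles momentarily share a level; since the surface is embedded the two bands are disjoint there, hence their projections to $M_{3/2}$ are disjoint and the diagram is unchanged (Lemma~\ref{lemma:simpleiso}(ii)). Your case (c), a gradient trajectory on $\Sigma$ joining two saddles, is purely intrinsic and does not belong to the relevant stratification: two bands can swim through each other in $M_{3/2}$ while lying on different components of the abstract surface, and an intrinsic handle slide can occur between saddles whose projections never meet. In short, the only singularities of $h|_{\Sigma_s}$ that matter are $A_2$ (cup/cap) and the harmless $A_1^\pm A_1^\pm$ crossings; slides and swims arise from the well-definedness analysis of the projection, not from Cerf theory on $\Sigma$.
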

Note that we have not defined what it means for an arbitrary surface in $S^4$ to be described by a banded unlink diagram (even with $h$ the standard height function). Part of the content of Theorem~\ref{thm:swenton} is that such a diagram is well-defined.
\begin{definition}
Let $\Sigma$ be a surface in $S^4$. Let $\Sigma'$ be a surface in banded unlink position (with respect to the standard height function) which is isotopic to $\Sigma$. We say that $(\K_0,L_{\Sigma'},v_{\Sigma'})$ is a banded unlink diagram for $\Sigma$. This diagram is well-defined up to cap/cup, band slides, band swims, and isotopy in $S^3$~\cite{swenton2001calculus,kearton2008all}.
\end{definition}

By including 2-handle-band swims, 2-handle-band slides, and dotted circle slides along with the moves in Theorem~\ref{thm:swenton}, we can generalize Theorem~\ref{thm:swenton} to surfaces in arbitrary closed 4-manifolds. We state the theorem now, even though we have not defined what it means for an arbitrary surface in $X^4$ to be described by a banded unlink diagram.

 \begin{theorem}\label{thm:generalswenton}
Let $\Sigma$ and $\Sigma'$ be surfaces $X^4$, with banded unlink diagrams $\D:=(\mathcal{K},L,v)$ and $\D':=(\mathcal{K},L',v')$ respectively.  Then $\Sigma$ and $\Sigma'$ are isotopic if and only if $\D$ can be transformed into $\D'$ by a finite sequence of band moves. 
\end{theorem}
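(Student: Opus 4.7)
The forward (``if'') direction is essentially verified by inspection of Figures~\ref{fig:oldbandmoves} and~\ref{fig:newbandmoves}: for each band move I would write down an explicit local ambient isotopy realizing it, using the relevant handle model. Cup and cap moves are births/deaths of pairs of cancelling critical points of $h\vert_\Sigma$; the slide and swim moves are local Reidemeister-style moves on the banded link realized by horizontal isotopies in $M_{3/2}$; and the handle-band moves are realized by vertical isotopies that push a band or an unlink component over the attaching disk of a 2-handle or the cocore of a 1-handle. In every case the isotopy class of $\Sigma(\K,L,v)$ is unchanged, so band-move-equivalent diagrams describe isotopic surfaces.

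For the converse, let $\{g_s\}_{s\in[0,1]}$ be an ambient isotopy with $g_0=\id$ and $g_1(\Sigma)=\Sigma'$, and set $\Sigma_s=g_s(\Sigma)$. My plan is to perturb this one-parameter family so that its banded unlink diagrams change only by single band moves, and then read off the sequence of moves. First I would normalize: using a one-parameter version of Theorem~\ref{normalform}, applied locally near each handle of $X^4$, arrange that $\Sigma_s\subset h^{-1}([1/2,5/2])$ for every $s$, that $\Sigma_s$ is in banded unlink position for all but finitely many exceptional values $s_1<\cdots<s_n$ in $(0,1)$, and that on each complementary subinterval the diagram $(\K,L_{\Sigma_s},v_{\Sigma_s})$ supplied by Lemma~\ref{diagramfromband} is constant up to an ambient isotopy of $L_{\Sigma_s}\cup v_{\Sigma_s}$ inside $E(\K)$ (which is already a band move).

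The heart of the argument is to catalogue the possible codimension-one failures at each $s_i$ and match each with one of the band moves. By a transversality argument on the space of horizontal-vertical banded surfaces, I would show that at a generic critical time exactly one of the following occurs: a pair of cancelling critical points of $h\vert_\Sigma$ is born or dies (cup or cap); an endpoint of one band passes along the length of another (band slide); one band crosses transversally through another in $M_{3/2}$ (band swim); the unlink $L$ or a band is pushed vertically past the attaching disk of a 2-handle, either disjoint from or threading it (2-handle-band slide or 2-handle-band swim); or the unlink $L$ or a band is pushed past the cocore of a 1-handle (dotted circle slide). When $X^4=S^4$ only the first three can occur, and the calculus collapses to the theorem of Swenton and Kearton--Kurlin (Theorem~\ref{thm:swenton}), which I would invoke to certify that no additional Morse-preserving events are missing away from the 1- and 2-handle regions.

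The principal obstacle is the transversality statement itself: I must rule out simultaneous or higher-codimension events, confirm that the local picture at each $s_i$ is really one of the six types above, and verify that the resulting change of diagram is literally a single band move rather than a hybrid requiring further decomposition. The handle moves are the genuinely new ingredients beyond the $S^4$ case. For these I would use the product neighborhood $S^1\times D^3$ of a 1-handle's cocore and $D^2\times S^2$ of a 2-handle's cocore sphere to identify each codimension-one crossing event with one of the local models pictured in Figure~\ref{fig:newbandmoves}, and then read the effect on the diagram directly off the model. Concatenating the band moves obtained at $s_1,\ldots,s_n$ with the intervening isotopies in $E(\K)$ then transforms $\D$ into $\D'$.
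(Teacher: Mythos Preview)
Your forward direction is fine and matches the paper's treatment.

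For the converse, there is a genuine conceptual gap in how you set up the one-parameter analysis. You propose to keep $\Sigma_s$ in \emph{banded unlink position} for all but finitely many $s$, and then list band slide and band swim among the codimension-one events. But in banded unlink position every band lies in the single $3$-manifold $M_{3/2}$ and the bands are pairwise \emph{disjoint} there. Disjoint embedded $2$-disks in a $3$-manifold stay disjoint under isotopy; there is no codimension-one event in such a family at which ``an endpoint of one band passes along the length of another'' or ``one band crosses transversally through another.'' Those phenomena only make sense when bands live at \emph{different} heights and are then projected to a common level, where their images may overlap. So your catalogue of events does not match the family you are analyzing, and the transversality statement you flag as ``the principal obstacle'' cannot be made to produce slide/swim moves in this framework. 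A related problem is that banded unlink position is highly non-generic (all saddles at one height), so a ``one-parameter version of Theorem~\ref{normalform}'' keeping you there is not a small perturbation argument.

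The paper avoids this by separating two layers. First, it works with \emph{generic} surfaces (Morse, distinct critical values) and applies standard Cerf theory, citing Kearton--Kurlin, to reduce an arbitrary $h$-disjoint isotopy to one crossing only $A_1^\pm A_1^\pm$ and $A_2$ strata; only the $A_2$ crossings change the diagram, and they give exactly cup/cap. Second, for each generic surface it defines a banded unlink diagram by flattening to horizontal-vertical position and then projecting all bands to $M_{3/2}$; the projections may overlap or meet ascending/descending manifolds of index-$1$ and index-$2$ critical points, and the \emph{choices} made to separate them are precisely the slide, swim, $2$-handle-band slide/swim, and dotted circle slide moves (Proposition~\ref{horizontaltoband}, Proposition~\ref{horizontallemma}, Figure~\ref{fig:getdiagram}). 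Thus the handle and slide/swim moves arise not as Cerf events of the isotopy but as the well-definedness of the diagram-extraction procedure. If you want to salvage your approach, you should replace ``banded unlink position'' by ``generic position'' in the one-parameter family and move the slide/swim/handle analysis into the construction of the diagram rather than the singularity list.
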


Note that when $X^4 = S^4$ and $\K=\K_0$, Theorem~\ref{thm:generalswenton} reduces to the statement of Theorem~\ref{thm:swenton}. Loosely, to prove Theorem~\ref{thm:generalswenton}, we will analyze how banded unlink diagrams for $\Sigma$ change under isotopy of $\Sigma$. Here is a brief outline of our strategy for proving Theorem~\ref{thm:generalswenton}.

\begin{enumerate}[1.]
\item We show that surfaces in banded unlink position admit banded unlink diagrams well-defined up to band moves. (Section~\ref{sec:diagramforbandposition})
\item We show that surfaces in horizontal-vertical position (Definition~\ref{hzdef}) admit banded unlink diagrams well-defined up to band moves (Section~\ref{sec:diagramforhzposition})
\item We show that certain isotopies of a horizontal-vertical surface preserves the associated banded unlink diagram up to band moves. (Section~\ref{sec:diagramforhzposition})
\item We show that surfaces in the more general {\emph{generic}} position (Definition~\ref{def:singular}) admit banded unlink diagrams well-defined up to band moves. (Section~\ref{sec:diagramforgeneric})
\item\label{step5} We show that certain isotopies between generic surfaces preserve the associated banded unlink diagrams up to band moves. (Section~\ref{sec:arbitrary})
\item We show that any isotopy of surfaces can be perturbed to an isotopy as in Step~\ref{step5} We define the banded unlink diagram of a surface $\Sigma$ to be the banded unlink diagram of any generic surface isotopic to $\Sigma$. (Section~\ref{sec:arbitrary})
\end{enumerate}

\subsection{Banded unlink diagrams for horizontal-vertical surfaces}\label{sec:diagramforhzposition}

We now extend Lemma~\ref{diagramfromband} to horizontal-vertical surfaces, rather than only surfaces in banded unlink position.

\begin{proposition}\label{horizontaltoband}
Let $\Sigma\subset X^4$ be a surface in horizontal-vertical position so that all minima of $h\vert_\Sigma$ are below all saddles of $h\vert_\Sigma$, which are below all maxima of $h\vert_\Sigma$. Then we may obtain a banded unlink diagram $\D=(\K,L_\Sigma,v_\Sigma)$ so that $L_\Sigma$ and $v_\Sigma$ are determined up to Morse-preserving band moves by the embedding of $\Sigma$ into $X^4$. Moreover, $\Sigma$ is isotopic to $\Sigma(\K,L_\Sigma,v_\Sigma)$.
\end{proposition}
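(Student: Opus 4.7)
The plan is to convert $\Sigma$ into banded unlink position by a sequence of horizontal and vertical isotopies and then invoke Lemma~\ref{diagramfromband}. First I would use vertical isotopies, each preceded by a generic horizontal isotopy that dodges the relevant ascending and descending manifolds of handles of $X^4$, to push every minimum disk of $h\vert_{\Sigma}$ down to $M_{1/2}$ and every maximum disk up to $M_{5/2}$. Because every minimum lies below every saddle and every maximum above every saddle, no saddle band is encountered during these pushes, and the vertical cylinders swept out by the disk boundaries complete $\Sigma$ into a surface that is vertical on $(1/2,t_{\min})$ and $(t_{\max},5/2)$, where $t_{\min}$ and $t_{\max}$ denote the lowest and highest saddle heights of $h\vert_{\Sigma}$.

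Next I would push each saddle band into $M_{3/2}$, working from the outside in so that a band being moved never collides with a band already at its destination. A band at height $t_b$ is flowed vertically to $M_{3/2}$ after a horizontal isotopy in $M_{t_b}$ that removes the obstructions posed by the 1- and 2-handle ascending and descending manifolds lying between $t_b$ and $3/2$. Each elementary choice in this horizontal isotopy produces exactly one band move on the image in $M_{3/2}$: crossing a 2-handle attaching circle yields a 2-handle-band slide, crossing a dotted circle of $\K$ yields a dotted circle slide, brushing past an endpoint or interior of another band yields a band slide or a band swim, and sweeping a 2-handle attaching circle transversely through a band interior yields a 2-handle-band swim. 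Once every band lies in $M_{3/2}$ and a final isotopy in $E(\K)$ makes the collection of bands disjoint from each other and from $L_2$, Lemma~\ref{diagramfromband} produces the required diagram $(\K,L_\Sigma,v_\Sigma)$; since only horizontal and vertical isotopies were used, $\Sigma$ is isotopic to $\Sigma(\K,L_\Sigma,v_\Sigma)$.

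The main obstacle is uniqueness: two sequences of horizontal and vertical isotopies putting $\Sigma$ into banded unlink position could a priori yield unrelated diagrams. My plan for this step is to interpolate between any two such sequences by a generic one-parameter family of sequences, and to observe that the generic crossings encountered correspond to elementary discrepancies, namely the order in which two bands are pushed to $M_{3/2}$, the side of a single handle manifold past which a horizontal isotopy passes, or a small homotopy of a horizontal arc across a handle feature. For each such discrepancy I would check by a local analysis around the relevant handle or band that the two resulting diagrams differ by exactly one of the Morse-preserving band moves listed in Section~\ref{sec:diagramforbandposition} or by an isotopy in $E(\K)$. The delicate case is the one in which reordering two bands requires one to cross both a 2-handle core and a piece of another band; this should decompose as a 2-handle-band swim followed by a band swim, and verifying that such compound discrepancies always admit a decomposition into Morse-preserving moves is where I expect the bookkeeping to be heaviest.
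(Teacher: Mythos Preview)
Your construction of the diagram is essentially the paper's: push extrema to $M_{1/2}$ and $M_{5/2}$ after horizontally dodging the relevant ascending/descending manifolds, then push the saddle bands to $M_{3/2}$, recording the Morse-preserving band moves that arise from the binary choices encountered. The paper organizes this slightly differently---it first compresses everything into $h^{-1}(3/2,5/2)$, then separately handles extrema, then places all bands at distinct heights in $h^{-1}(3/2,2)$ and \emph{projects} them simultaneously to $M_{3/2}$, resolving the resulting overlaps---but the content is the same and your identification of which handle crossings produce which band moves matches the paper's Figure~\ref{fig:getdiagram}.

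Where you diverge is in the uniqueness argument. You propose to interpolate between two isotopy \emph{sequences} by a generic one-parameter family of sequences and analyze the codimension-one events. This is a two-parameter argument and, while not wrong in spirit, is heavier machinery than needed and is exactly where you anticipate the bookkeeping getting bad. The paper instead observes that if two choices of horizontal/vertical isotopy carry $\Sigma$ to banded-unlink-position surfaces $\Sigma'$ and $\Sigma''$, then $\Sigma'$ and $\Sigma''$ are themselves related by a concatenation of $h$-disjoint, $h$-regular horizontal and vertical isotopies (namely one sequence followed by the reverse of the other). So it suffices to prove the separate statement (Proposition~\ref{horizontallemma}): a single horizontal or vertical $h$-regular isotopy between two such surfaces changes the associated diagram only by Morse-preserving band moves. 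This reduces the problem to a one-parameter analysis of what a horizontal isotopy does to the band projections (the vertical case being nearly trivial since projections are preserved pointwise), and the compound discrepancies you worry about never arise---each elementary crossing of a band with a handle manifold or another band's projection contributes exactly one move, and these are simply concatenated along the isotopy. Replacing your interpolation-of-sequences argument with this direct lemma would both simplify the proof and eliminate the case analysis you flag as delicate.
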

\begin{proof}
We will isotope $\Sigma$ into banded unlink position and apply Lemma~\ref{diagramfromband}. 
If necessary, apply a small horizontal isotopy to the horizontal parts of $\Sigma\cap h^{-1}(3-\epsilon,4]$ to avoid intersections with the ascending manifolds of index-$3$ critical points of $h$. Then isotope $\Sigma\cap h^{-1}(5/2-\epsilon,4]$ vertically into $h^{-1}(5/2-\epsilon,5/2)$. 

Next, apply a small horizontal isotopy to the horizontal parts of $\Sigma\cap h^{-1}[0,3/2+\epsilon)$ to avoid intersections with the descending manifolds of index-$1$ critical points of $h$. Then isotope $\Sigma\cap h^{-1}[0,3/2+\epsilon)$ vertically into $h^{-1}(3/2,3/2+\epsilon)$. 

Now isotope horizontal neighborhoods of the minima and maxima of $h\vert_{\Sigma}$ horizontally to avoid the ascending and descending manifolds of index-$2$ critical points of $h$. Isotope the minima vertically to $h^{-1}(1/2)$ and the maxima vertically to $h^{-1}(5/2)$ (apply further horizontal isotopy to the minima and maxima as necessary to ensure no self-intersections are introduced to $\Sigma$.) 

Let $b$ be the horizontal neighborhood of an index-$1$ critical point of $h\vert_\Sigma$ (so $b$ is a band). If $h(b)> 2$ and $b$ intersects the ascending manifold of some index-$2$ critical point of $h$, then isotope $b$ slightly horizontally to either side of the ascending manifold. (These two choices eventually give rise to banded unlink diagrams which differ by a $2$-handle-band slide; see top left of Fig.~\ref{fig:getdiagram}.) Do this for each such intersection, and then isotope $b$ vertically to $h^{-1}(3/2,2)$ (vertically isotope other bands downward in $h^{-1}(3/2,2)$ as necessary to avoid self-intersections). 

Repeat for every other index-$1$ critical point of $h\vert_\Sigma$. 
Take the bands to lie in distinct heights, by vertical isotopy. Say the bands are $b_1,\ldots, b_n$, with $3/2<h(b_1)<\cdots<h(b_n)<2$. Set $L_\Sigma:=\Sigma\cap M_{3/2}$, $v_i:=\rho_{h(b_i),3/2}$ and $v_\Sigma=\cup_i v_i$.

Note $(\K,L_\Sigma,v_\Sigma)$ is not yet a banded unlink diagram, due to the following unallowed situations that may occur when projecting the bands $b_i$ to $M_{3/2}$.

\begin{itemize}
\item It might be that an end of some $v_i$ is attached to another band $v_j$. This implies $j<i$. If so, slide the end of $v_i$ off of $v_j$ and onto either $L_\Sigma$ or $v_k$, with $k<j$. Repeat until both ends of $v_i$ are on $L_\Sigma$. There are two choices to make at each step (that is, there is a choice of which direction to slide). The two obtainable diagrams differ by a sequence of band slides. (See the left of second row of Fig.~\ref{fig:getdiagram} for the simplest case when $v_j$ has both ends on $L_\Sigma$.)
\item It might be that a band $v_i$ intersects the interior of another band $v_j$. This implies $j<i$. If so, swim $v_i$ out the length of $v_j$. If $v_j$ intersects the interior of another band $v_k$ (necessarily $k<j$), this introduces new intersections between $v_i$ and $v_k$. Repeat on each intersection of $v_i$ with another band until $v_i$ does not intersect any other bands. There are two choices at each step (that is, there is a choice of which direction to swim). The two obtainable diagrams differ by a sequence of band swims.  (See the left of third row of Fig.~\ref{fig:getdiagram} for the simplest case when $v_j$ does not intersect the interior of any other band.)
\item  It might be that a segment of a band $v_i$ passes through the descending manifold of some index-$2$-critical point of $h$. In $\K$, this means that $v_i$ intersects a $2$-handle attaching circle $C$. Swim $C$ through $v$ to remove the intersection. There are two choices of directions in which to swim. The two obtainable diagrams differ by a $2$-handle-band swim.  (See the right top row of Fig.~\ref{fig:getdiagram} for the simplest case when $v$ intersects exactly one attaching circle, exactly once.)
\item It might be that $L_\Sigma$ or a band $v_i$ still do not lie in $E(\K)$ because they intersect the ascending manifold of an index-$1$ critical point. Then push $L_\Sigma$ or $v_i$ horizontally off the ascending manifold. For each such intersection, there are two choices of which direction to push. The two obtainable diagrams differ by a dotted circle slide. (See the right second and third rows of Fig. ~\ref{fig:getdiagram}.)
\end{itemize}

Changing any choices made during this operation changes the diagram by Morse-preserving band moves. See Figure~\ref{fig:getdiagram} for a summarizing schematic.

Each move on the projections $v_i$ can be induced by a horizontal isotopy supported in a neighborhood of $b_i$. After this procedure and a vertical isotopy of the bands to $M_{3/2}$, we find an isotopy from $\Sigma$ to a surface in banded unlink position, whose banded unlink diagram we denote by $\D=(\K,L_\Sigma,v_\Sigma)$.  Via the isotopy constructed above we see that  $\Sigma$  is indeed isotopic $\Sigma(\K,L_\Sigma,v_\Sigma)$. 

We must check that the choices of $h$-regular and $h$-disjoint horizontal and vertical isotopies used to position $\Sigma$ (that is, the choice of $\Sigma'$) do not affect the resulting diagram up to Morse-preserving band moves (i.e. that $L_\Sigma$ and $v_\Sigma$ are well-defined up to Morse-preserving band moves). It is sufficient to prove the following proposition.

\begin{proposition}\label{horizontallemma}
Let $S$ and $S'$ be horizontal-vertical surfaces in $X^4$, with all minima below all bands, which are in turn below all maxima. Assume all bands of $S$ and $S'$ can be projected to $M_{3/2}$ (i.e.\ assume that the bands of $S$ and $S'$ do not intersect the ascending manifold of any index-$3$ critical point of $h$ or the descending manifold of any index-$1$ critical point of $h$).  Suppose there is an $h$-disjoint and $h$-regular horizontal or vertical isotopy $f$ taking $S$ to $S'$. Let $\D_S=(\K,L_S,v_S)$ be a banded unlink diagram obtained by setting $L_S=S\cap M_{t_0+\epsilon}$ for $t_0$ the height of the highest minima of $S$, and $v_S$ the bands obtained by projecting the bands of $S$ to $M_{3/2}$ (viewed as containing a copy of $L_S$, projected vertically) and (as in Proposition~\ref{horizontaltoband}) choosing slides, swims, $2$-handle-band slides/swims, and dotted circle slides as necessary to make the projected bands disjointly lie in $E(\K)$. Similarly choose a banded unlink diagram $\D_{S'}=(\K,L_{S'},v_{S'})$ using $S'$. Then $\D_S$ and $\D_{S'}$ are related by Morse-preserving band moves.
\end{proposition}

\begin{proof}
{\bf{Case 1: The isotopy is horizontal}}
Since the isotopy is horizontal, $L_S$ is isotopic to $L_{S'}$ in $h^{-1}(3/2)$. Therefore, $L_S$ is isotopic to $L_{S'}$ in $E(S)$ up to dotted circle slides.

Let $v_i$ be a band in $S$. The isotopy $f$ takes the banded link $\tilde{L}\cup v_1\cup\cdots\cup v_k=S\cap M_{h(v_i)}$ to the banded link $\tilde{L'}\cup v'_1\cup\cdots\cup v'_k=S'\cap M_{h(v_i)}$, up to relabeling of bands (for some isotopic links $\tilde{L}$ and $\tilde{L'}$). Since $\tilde{L}$ and $\tilde{L'}$ are isotopic, there is a natural identification between $\tilde{L}$ and $\tilde{L'}$. Say that $v_i$ goes to band $v'_i$, with their endpoints on $\tilde{L}$ identified.

Suppose $h(v_i)> 2$. As per the above argument in Proposition~\ref{horizontaltoband}, if the isotopy passes $v_i$ through the ascending manifold of an index-$2$ critical point of $h$, then this effects a $2$-handle-band slide in $\D_S$.

For any value of $h(v_i)$, if $f$ takes the ends of $\rho_{h(v_i),3/2} (v_i)$ over any other projection $\rho_{h(v_j),3/2}(v_j)$ with $h(v_j)< h(v_i)$, then as in Proposition~\ref{horizontaltoband} this effects a band slide in $\D$. If $f$ takes the interior of $\rho_{h(v_i),3/2}(v_i)$ through any other projection $\rho_{h(v_j),3/2}(v_j)$ with $h(v_j)< h(v_i)$, then as in Proposition~\ref{horizontaltoband} this effects a band swim in $\D_S$. If $f$ takes $v_i$ through the descending manifold of an index-$2$ critical point of $h$, then as in Proposition~\ref{horizontaltoband} this effects a $2$-handle-band swim in $\D_S$.

Finally, if $f$ takes $\rho_{h(v_i),3/2} (v_i)$ through the ascending manifold of an index-$1$ critical point, then as in Proposition~\ref{horizontaltoband} this effects a dotted circle slide in $\D$.

If none of the above happen to $v_i$ during $f$, then the replacement $v_i\mapsto v'_i$ just isotopes the projection of $v_i$ in  $h^{-1}(3/2)$; i.e. changes the projection of $v_i$ by isotopy in $E(K)$ and dotted circle slides.

{\bf{Case 2: The isotopy is vertical}}

By assumption, the vertical isotopy does not introduce new critical points of $h\vert_{S}$ and preserves the projections of $S$ pointwise to each $M_{t}$. Then $L_{S'}$ differs by $L_S$ by isotopy in $h^{-1}(3/2)$, i.e. isotopy in $E(K)$ and dotted circle slides. Moreover, the vertical isotopy does not affect the projections of the bands of $S$ (after identifying $L_S$ and $L_{S'}$), so these projections agree with those of $S'$. Then $\D_S$ and $\D_{S'}$ agree up to Morse-preserving band moves which arise from varying choices of how to separate the projections of bands to $M_{3/2}$ (as seen above in Proposition~\ref{horizontaltoband}). 
\end{proof}


Thus, $\D$ is well-defined from $\Sigma$ up to Morse-preserving band moves. Moreover, $\Sigma$ is isotopic to $\Sigma'$, so by Lemma~\ref{diagramfromband}, $\Sigma$ is isotopic to $\Sigma(\K,L_\Sigma,v_\Sigma)$.  This completes the proof of Proposition~\ref{horizontaltoband}.

\end{proof}

\begin{figure}
\includegraphics[width=\textwidth]{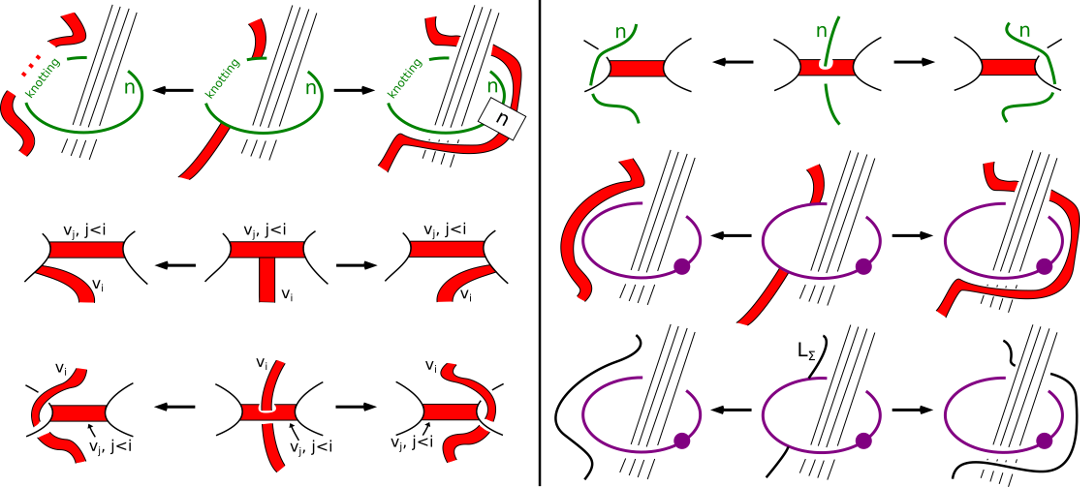}
\caption{{\bf{Left, top row:}} If a band $v_i$ intersects the ascending manifold of an index-$2$ critical point of $h$, we must choose how to slide the band off of the ascending manifold before projecting to $h^{-1}(3/2)$. In $\K$, these two choices yield diagrams that differ by a $2$-handle-band slide of $v_i$ over the corresponding $2$-handle attaching circle. {\bf{Left, second row:}} If the end of band $v_i$ lies on $v_j$, then we must choose which way to slide $v_i$ off of $v_j$. (Here we draw only the simple case that both ends of $v_j$ lie on $L_\Sigma$. We do not care about interior intersections of bands or intersections with $2$-handle attaching circles.) {\bf{Left, third row:}} If a band $v_i$ intersects the interior of band $v_j$, then we must choose which way to swim $v_i$ through and out of $v_j$. (Here we draw only the simple case that $v_j$ does not intersect any other bands. We do not care about intersections with $2$-handle attaching circles.) {\bf{Right, top row:}} If a band $v_i$ intersects the descending manifold of an index-$2$ critical point, we must choose how to swim the corresponding attaching circle in $\K$ out of the band $v_i$. (Here, we draw only the simple case that only one $2$-handle attaching circle intersects $v_i$, in one point.) {\bf{Right, second and third row:}} If a band $v_i$ or $L$ (respectively) intersect the ascending manifold of an index-$1$ critical point, then we push horizontally off. The resulting diagrams differ by a dotted circle slide.}\label{fig:getdiagram}
\end{figure}

Given $\Sigma$ as in Proposition~\ref{horizontaltoband}, we let $(\K,L_\Sigma,v_\Sigma)$ denote the banded unlink diagram resulting from the proof of Proposition~\ref{horizontaltoband}.

We now show that we need not restrict the ordering of critical points of a horizontal-vertical surface in order to obtain a banded unlink diagram.

\begin{lemma}\label{horizontalisotopy}
Let $\Sigma$ be a surface in horizontal-vertical position. Then we may obtain a banded unlink diagram $(\K,L_\Sigma,v_\Sigma)$ so that $L_\Sigma$ and $v_\Sigma$ are determined up to Morse-preserving band moves by the embedding of $\Sigma$ into $X^4$. Moreover, $\Sigma$ is isotopic to $\Sigma(\K,L_\Sigma,v_\Sigma)$.
\end{lemma}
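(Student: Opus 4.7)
The plan is to reduce Lemma~\ref{horizontalisotopy} to Proposition~\ref{horizontaltoband} by first isotoping $\Sigma$ via horizontal and vertical isotopies into an ordered horizontal-vertical surface to which Proposition~\ref{horizontaltoband} applies, then transporting the resulting diagram back. The definition of $(\K,L_\Sigma,v_\Sigma)$ then comes for free from Proposition~\ref{horizontaltoband}; the substance of the proof is verifying well-definedness up to Morse-preserving band moves.

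First I would construct an $h$-regular, $h$-disjoint isotopy $f$, itself a concatenation of horizontal and vertical isotopies, that takes $\Sigma$ to a horizontal-vertical surface $\tilde{\Sigma}$ with all minima of $h\vert_{\tilde{\Sigma}}$ below all saddles below all maxima. The rearrangement is carried out one critical point at a time: to slide a minimum disk $D$ downward past an obstruction, first apply a horizontal isotopy inside the level set containing $D$ to reposition $D$ so that the vertical cylinder swept out by $D$ under the reverse flow of $\nabla h$, between the current height of $D$ and some prescribed low height, is disjoint from the rest of $\Sigma$ and from the ascending and descending manifolds of the critical points of $h$ it would otherwise pass. This is a standard dimension-count, since $D$ is $2$-dimensional inside a $3$-dimensional level set while the obstructing sets are at most $1$-dimensional on each intermediate level set. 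Then a vertical isotopy slides $D$ to its new height. Maxima are pushed upward by the symmetric argument, and saddles are settled between the resulting blocks by the same technique.

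Applying Proposition~\ref{horizontaltoband} to $\tilde{\Sigma}$ produces a banded unlink diagram $(\K, L_{\tilde{\Sigma}}, v_{\tilde{\Sigma}})$, well-defined up to Morse-preserving band moves; set $(\K,L_\Sigma,v_\Sigma):=(\K, L_{\tilde{\Sigma}}, v_{\tilde{\Sigma}})$. Because $\Sigma$ is isotopic to $\tilde{\Sigma}$ via $f$ and $\tilde{\Sigma}$ is isotopic to $\Sigma(\K,L_\Sigma,v_\Sigma)$ by Proposition~\ref{horizontaltoband}, the surface $\Sigma$ is isotopic to $\Sigma(\K,L_\Sigma,v_\Sigma)$, as required.

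The main obstacle is showing that this assignment is well-defined, independent of the choice of rearranging isotopy $f$. Given a second such isotopy $f'$ with ordered target $\tilde{\Sigma}'$ and diagram $\D'$, I would construct an $h$-regular, $h$-disjoint isotopy from $\tilde{\Sigma}$ to $\tilde{\Sigma}'$ that is a concatenation of horizontal and vertical pieces and stays entirely within the class of ordered horizontal-vertical surfaces; then Proposition~\ref{horizontallemma} applies to each piece and yields the desired Morse-preserving equivalence of diagrams. Existence of such an ordered interpolation reduces to a parametrized version of the first step: $\tilde{\Sigma}$ and $\tilde{\Sigma}'$ are both ordered representatives of the same combinatorial data coming from $\Sigma$, so one may interpolate by horizontal isotopies (which automatically preserve the ordering) and by vertical isotopies whose vertical shifts are chosen small enough that each critical value remains in its assigned block throughout the interpolation. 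This is the delicate part, and I expect the bookkeeping of simultaneously shifting several critical points past one another without losing the ordering hypothesis to be the technical heart of the argument.
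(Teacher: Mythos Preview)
Your proposal is correct and follows essentially the same strategy as the paper: reorder $\Sigma$ via horizontal and vertical isotopies into an ordered horizontal-vertical surface, invoke Proposition~\ref{horizontaltoband}, and verify well-definedness by interpolating between two ordered targets via an isotopy that remains ordered throughout so that Proposition~\ref{horizontallemma} applies piecewise. The paper is slightly more concrete about the last step, observing that one can pass between any two ordered representatives by reordering the minima among themselves (below all bands), then the bands among themselves (between the extrema), then the maxima among themselves---this makes the ``stays ordered'' condition transparent and avoids the bookkeeping you flag as delicate.
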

\begin{proof}
Choose an ordering $x_1,\ldots, x_n$ of the critical points of $h\vert_\Sigma$, with all index-$0$ critical points coming before all index-$1$ critical points, which come before all index-$2$ critical points. Perform $h$-regular and $h$-disjoint horizontal and vertical isotopy to $\Sigma$ to reorder the horizontal regions according to this ordering, to obtain surface $\Sigma'$ (the vertical isotopies move horizontal regions to the appropriate height; we apply horizontal isotopy as necessary to ensure horizontal regions never intersect). Set $L_{\Sigma}=L_{\Sigma'}$ and $v_{\Sigma}=v_{\Sigma'}$.

Suppose $y_1,\ldots, y_n$ is another ordering of the critical points of $h\vert_\Sigma$ with all index-$0$ critical points coming before all index-$1$ critical points, which come before all index-$2$ critical points. Let $\Sigma''$ be the surface obtained by isotoping $\Sigma$ to reorder the horizontal regions of $\Sigma$ according to this ordering. Then $\Sigma''$ can be transformed into $\Sigma'$ by a sequence of $h$-regular and $h$-disjoint horizontal and vertical isotopies (and the surface is in horizontal-vertical position after each isotopy, with minima below bands below maxima. Essentially, we reorder minima, keeping them below all bands. Then we reorder bands, keeping them between the minima and maxima. Then we reorder the maxima, keeping them above the bands). 
By Proposition~\ref{horizontallemma}, the diagrams $(\K,L_{\Sigma'},v_{\Sigma'})$ and $(\K,L_{\Sigma''},v_{\Sigma''})$ agree up to Morse-preserving band moves. Therefore, $(\K, L_{\Sigma}, v_{\Sigma})$ does not depend on the choice of $\Sigma'$. By Proposition~\ref{horizontaltoband},  $(\K, L_{\Sigma}, v_{\Sigma})$ is well-defined up to Morse-preserving band moves. Moreover, $\Sigma(\K,L_\Sigma,v_\Sigma)$ is isotopic to $\Sigma'$, so is isotopic to $\Sigma$.
\end{proof}

Given $\Sigma$ as in Lemma~\ref{horizontalisotopy}, we let $(\K,L_\Sigma,v_\Sigma)$ denote the banded unlink diagram resulting from the proof of Lemma~\ref{horizontalisotopy}.
\begin{remark}\label{fixhorizontallemma}
Note that we can immediately extend Proposition~\ref{horizontallemma} to say that if $\Sigma$ and $\Sigma'$ are horizontal-vertical surface which are isotopic through an $h$-regular and $h$-disjoint horizontal or vertical isotopy, then $(\K,L_\Sigma,v_\Sigma)$ is related to $(\K,L_{\Sigma'},v_{\Sigma'})$ by Morse-preserving band moves.
\end{remark} 

\subsection{Interlude: surface singularities}
The following definitions generalize the classes of surface singularities identified and studied in~\cite{kearton2008all}. 

\begin{definition}[{\cite[Def. 2.5]{kearton2008all}}]\label{def:singular}
For a fixed surface $F$, let CS be the space of all smoothly embedded surfaces $\Sigma\subset X^4$ with $\Sigma\cong F$. CS inherits the induced topology from the Whitney topology on the space of smooth maps $F \rightarrow X$. From now on, when we write ``CS'', the topology of the embedded surface is implictly understood to be fixed.  
  We say that $\Sigma\in\CS$ is: 
\begin{itemize}
\item {\emph{generic}} if $h|_{\Sigma}$ is Morse and all critical points of $h|_{\Sigma}$ have distinct height values under $h$;
\item an {\emph{$A_1^+A_1^+$-singularity}} if $h|_{\Sigma}$ fails to be generic because of two nondegenerate extrema of $h$ that have the same $h$ value; 
\item an {\emph{$A_1^+A_1^-$-singularity}} if $h\vert_{\Sigma}$ fails to be generic because a nondegenerate saddle and extremum of $h$ that have the same $h$ value;
\item an {\emph{$A_1^-A_1^-$-singularity}} if $h\vert_{\Sigma}$ fails to be generic because of two nondegenerate saddles of $h$ that have the same $h$ value;
\item an {\emph{$A_2$-singularity}} if $h\vert_{\Sigma}$ fails to be generic because of a singularity of $h\vert_{\Sigma}:\Sigma\rightarrow \mathbb{R}$ having the form $h(x,y)=x^2-y^3$ in local coordinates $(x,y)$ on $\Sigma$. 
\end{itemize}

\end{definition}

Let $\S_h\subset \CS$ be the subspace of all surfaces which are $A_1^+A_1^+$-, $A_1^+A_1^-$-, $A_1^-A_1^-$-, or $A_2$-singularities. 
We call $\S_h$ the {\emph{singular subspace of $X^4$ with respect to $h$.}} 
When $\mathcal{K}=\K_0$, then $\S_h$ agrees with the singular subspace defined by Kearton-Kurlin~\cite{kearton2008all}. 

\subsection{Banded unlink diagrams for generic surfaces}\label{sec:diagramforgeneric}

%
%
%

The following lemma is analogous to~\cite[Prop. 2.6(i)]{kearton2008all}.

\begin{lemma}\label{generictoband}
 Let $\Sigma\subset X^4$ be a generic surface which is disjoint from critical points of $h$. Then we may obtain a banded unlink diagram $(\K,L_\Sigma,v_\Sigma)$ determined by $\Sigma\into X^4$ up to Morse-preserving band moves. Moreover, $\Sigma$ is isotopic to $\Sigma(\K, L_\Sigma, v_\Sigma)$.
\end{lemma}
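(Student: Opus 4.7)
The plan is to reduce Lemma~\ref{generictoband} to Lemma~\ref{horizontalisotopy} by isotoping $\Sigma$ into horizontal-vertical position via a concatenation of $h$-regular and $h$-disjoint horizontal and vertical isotopies, and then arguing that any two such horizontal-vertical normalizations of $\Sigma$ yield Morse-preserving-equivalent diagrams.

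First, I would carry out the local flattening step (essentially the construction behind Theorem~\ref{normalform}). Near each critical point $x$ of $h|_\Sigma$, choose Morse coordinates on $\Sigma$ in which $h|_\Sigma$ has its standard quadratic form. In a small neighborhood of $x$ in $\Sigma$, perform a horizontal isotopy that takes a small horizontal disk (for a min/max) or horizontal band (for a saddle) into a single level set $M_{t(x)}$; since $\Sigma$ is generic, the values $t(x)$ can be chosen pairwise distinct and distinct from $\{0,1,2,3,4\}$. Between these flattened neighborhoods, $\Sigma$ is a union of compact annuli and rectangles on which $h$ has no critical points, and a further horizontal isotopy (using the local product structure furnished by $\rho_{t_1,t_2}$ away from critical points of $h$) can be used to make these pieces vertical on each interval $(t(x_i), t(x_{i+1}))$. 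Both stages are $h$-regular and $h$-disjoint by construction. Call the resulting horizontal-vertical surface $\Sigma'$; applying Lemma~\ref{horizontalisotopy} to $\Sigma'$ produces a diagram $(\K, L_{\Sigma'}, v_{\Sigma'})$, well-defined up to Morse-preserving band moves, and $\Sigma'$ — hence $\Sigma$ — is isotopic to $\Sigma(\K, L_{\Sigma'}, v_{\Sigma'})$.

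For well-definedness, I would show that any two horizontal-vertical surfaces $\Sigma'$ and $\Sigma''$ produced from $\Sigma$ by the above normalization are connected by a finite concatenation of $h$-regular and $h$-disjoint horizontal and vertical isotopies. This follows because the choices involved — the size and placement of the horizontal neighborhoods of each critical point, the chosen height $t(x)$, and the specific horizontal isotopy filling in between — can each be interpolated by such elementary isotopies: sliding $t(x)$ along an interval of non-critical values is a vertical isotopy; varying the radius of the flattened neighborhood or modifying the connecting vertical piece is a horizontal isotopy. Applying Remark~\ref{fixhorizontallemma} (the extension of Proposition~\ref{horizontallemma}) to each step of this concatenation shows that $(\K,L_{\Sigma'},v_{\Sigma'})$ and $(\K,L_{\Sigma''},v_{\Sigma''})$ differ by Morse-preserving band moves, so the diagram we associate to $\Sigma$ is well-defined.

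The main obstacle is the well-definedness argument: controlling that the many discrete combinatorial choices in the normalization procedure (ordering of heights, which side of a 1- or 2-handle to push bands off of, how to separate projections of bands in $M_{3/2}$) all produce diagrams related by Morse-preserving moves rather than by moves involving cups or caps. This is handled by pushing every such choice into either Remark~\ref{fixhorizontallemma} (for choices absorbed by further horizontal/vertical isotopies between horizontal-vertical surfaces with the same critical structure) or directly into the catalog of Morse-preserving moves identified in the proof of Proposition~\ref{horizontaltoband} (for choices involving dotted-circle slides, $2$-handle-band slides and swims, band slides, band swims, and isotopy in $E(\K)$). Since no critical points of $h|_\Sigma$ are created or destroyed during the normalization, no cups or caps are needed, confirming that the ambiguity in $(\K,L_\Sigma,v_\Sigma)$ is exactly by Morse-preserving band moves, as claimed.
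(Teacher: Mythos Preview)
Your proposal is correct and follows essentially the same approach as the paper: flatten $\Sigma$ near each critical point into horizontal disks and bands, make the intervening annuli vertical via horizontal isotopy, apply Lemma~\ref{horizontalisotopy} to the resulting horizontal-vertical surface, and invoke Remark~\ref{fixhorizontallemma} for well-definedness. The only cosmetic difference is that the paper argues well-definedness by simply concatenating the two normalizing isotopies (running one backward) rather than interpolating the individual choices as you do, and it organizes the flattening inductively from the lowest critical value upward rather than flattening all critical neighborhoods first; neither changes the substance of the argument.
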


\begin{proof}
We will isotope $\Sigma$ into horizontal-vertical position and apply Lemma~\ref{horizontalisotopy}. See Figure~\ref{fig:flatten} for a schematic of the isotopy.

Flatten $\Sigma$ in a small neighborhood of each local extrema. Say the smallest value of $t$ for which $M_t\cap\Sigma$ is nonempty is $t_0$; so $\Sigma\cap M_{t_0+\epsilon}$ is an unlink for very small $\epsilon>0$. Take the values of critical points of $h\vert_{\Sigma}$ to be $t_0< t_1<\dots< t_n$. Peturb $\Sigma$ vertically if necessary so that $\{t_0,\ldots, t_n\}\cap\{0,1,2,3,4\}=\emptyset$ (for small perturbations, the choice of perturbation does not affect the construction of Lemma~\ref{horizontalisotopy}). Since $\Sigma$ is generic, there is exactly one critical point of $h\vert_{\Sigma}$ in each $M_{t_i}$.

Fixing $\Sigma\cap M_{t_0}$, horizontally isotope $\Sigma\cap h^{-1}(t_0,4]$ so that $\Sigma$ is vertical in $h^{-1}[0,t_1-\epsilon)$.

Then the projection of $\Sigma\cap h^{-1}[t_1-\epsilon, t_1+\epsilon]$ to $M_{t_1}$ is a bounded (perhaps disconnected) surface $S$. Specifically, if $x$ is the critical point of $h\vert_{\Sigma}$ in $M_{t_1}$, then this $S$ is\[\begin{cases}\text{the disjoint union of an annulus and a disk}&\text{$x$ is a minimum}\\
\text{a planar surface with three boundary components}&\text{$x$ is a saddle}\\
\text{a disk}&\text{$x$ is a maximum.}\end{cases}.\]

If $x$ is an extremum, perform a horizontal isotopy of $\Sigma$ in $h^{-1}[t_1-\epsilon,t_1+\epsilon]$ so that $\Sigma\cap h^{-1}(t_0,t_1)$ is vertical. The cross-section $\Sigma\cap h^{-1}(t_1)$ includes one horizontal disk.

Say that $x$ is a saddle. Then up to isotopy in $M_{t_1}$, $S$ can be uniquely contracted to a $1$-complex $\Sigma\cap M_{t_1-\epsilon}\cup\{$edge $E\}$. Vertically isotope $\Sigma$ in $h^{-1}[t_1-\epsilon,t_1+\epsilon]$ so that $\Sigma$ is vertical in $h^{1}(t_0,t_1)$ and $\Sigma\cap M_{t_1}=(\Sigma\cap M_{t_1-\epsilon})\cup($band along $E$). The framing of the band along arc $E$ agrees with the surface framing $S$ induces on $E$.

Now for $i=1,\ldots, n-1$ (in order), repeat this procedure. That is, horizontally isotope $\Sigma\cap h^{-1}(t_i,4]$ so that $\Sigma$ is vertical in $h^{-1}(t_i,t_{i+1}-\epsilon)$. If the critical point of $h\vert_\Sigma$ in $M_{t_{i+1}}$ is an extrema, then horizontally isotope $\Sigma$ near $M_{t_{i+1}}$ so that $\Sigma$ is vertical in $h^{-1}(t_i,t_{i+1})$ and $\Sigma\cap M_{t_{i+1}}$ is a disjoint union of a (possibly empty) link and a disk. If the critical point of $h\vert_\Sigma$ in $M_{t_{i+1}}$ is a saddle, then again (up to isotopy in $M_{t_{i+1}}$) there is a unique band $b$ that can be attached to $\Sigma\cap M_{t_{i+1}-\epsilon}$ so that resolving $b$ yields $\Sigma\cap M_{t_{i+1}+\epsilon}$. Vertically isotope $\Sigma\cap h^{-1}[t_{i+1}-\epsilon,t_{i+1}+\epsilon]$ so that $\Sigma$ is vertical in $h^{-1}(t_i,t_{i+1})$ and $\Sigma\cap h^{-1}(t_{i+1})$ is the banded link $\Sigma\cap M_{t_{i+1}-\epsilon}\cup b$. 

Call the resulting surface $\Sigma'$, so the original $\Sigma$ is isotopic through $h$-regular and $h$-disjoint horizontal and vertical isotopies to $\Sigma'$, where $\Sigma'$ is in horizontal-vertical position. Suppose $\Sigma''$ is another horizontal-vertical surface isotopic to $\Sigma$ through $h$-disjoint and $h$-regular horizontal and vertical isotopies. Then $\Sigma'$ and $\Sigma''$ are isotopic through $h$-disjoint and $h$-regular horizontal and vertical isotopies. By Proposition~\ref{horizontallemma} and Remark~\ref{fixhorizontallemma}, $(\K,L_{\Sigma'},v_{\Sigma'})$ is related to $(\K,L_{\Sigma''},v_{\Sigma''})$ by Morse-preserving band moves. 


Set $(\K,L_{\Sigma},v_{\Sigma}):=(\K,L_{\Sigma'},v_{\Sigma'})$. By Lemma~\ref{horizontalisotopy}, $\Sigma$ determines a banded unlink diagram $(\K,L_\Sigma,v_\Sigma)$ well-defined up to Morse-preserving band moves. Moreover, $\Sigma$ is isotopic to $\Sigma(\K,L_\Sigma,v_\Sigma)$.
\end{proof}

\begin{figure}
\includegraphics[width=\textwidth]{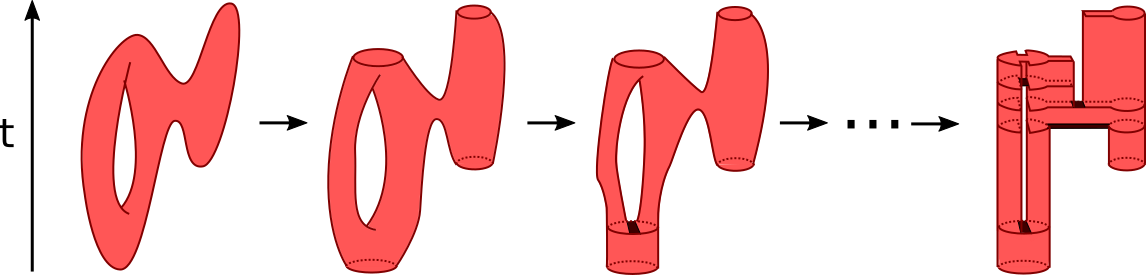}
\caption{To isotope a generic surface $\Sigma\subset X^4$ into horizontal-vertical position with respect to $h$, we first flatten a neighborhood of each extrema of $h\vert_\Sigma$. Then we isotope $\Sigma$ to be vertical below each critical point of $h\vert_\Sigma$, starting from the lowest critical point and working upward. We give more detail in the proof of Lemma~\ref{generictoband}.}\label{fig:flatten}
\end{figure}

Given $\Sigma$ as in Lemma~\ref{generictoband}, we let $(\K,L_\Sigma,v_\Sigma)$ denote the banded unlink diagram resulting from the proof of Lemma~\ref{generictoband}.  We say that $(\K,L_\Sigma,v_\Sigma)$ {\emph{describes}} or {\emph{is a banded unlink diagram for}} $\Sigma$.
%
\begin{remark}
Note that when $\Sigma$ is a generic surface in banded unlink position, the definitions of $(\K,L_\Sigma, v_\Sigma)$ in Lemmas~\ref{diagramfromband} and~\ref{generictoband} agree.
\end{remark}

\subsection{Banded unlink diagrams for arbitrary surfaces}\label{sec:arbitrary}

The following Lemma is analogous to~\cite[Prop. 2.6(ii-iv)]{kearton2008all}.

\begin{lemma}\label{lemma:simpleiso}
Let $\Sigma$ and $\Sigma'$ be generic surfaces in $X^4$. Let $\D=(\K,L,v)$ and $\D'=(\K,L',v')$ be banded unlink diagrams associated to $\Sigma$ and $\Sigma'$ respectively, as in Lemma~\ref{generictoband}.
\begin{enumerate}
\item If $\Sigma$ and $\Sigma'$ are $h$-disjoint isotopic through generic surfaces, then $\D$ and $\D'$ are related by Morse-preserving band moves.
\item If $\Sigma$ and $\Sigma'$ are $h$-disjoint isotopic through generic surfaces and one $A_1^{\pm} A_1^{\pm}$-singularity, then $\D$ and $\D'$ are related by Morse-preserving band moves.
\item If $\Sigma$ and $\Sigma'$ are $h$-disjoint isotopic through generic surfaces and one $A_2$-singularity, then $\D$ and $\D'$ are related by band moves.
\end{enumerate}
\end{lemma}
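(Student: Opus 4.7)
My plan is to reduce each part to the horizontal-vertical position machinery of Section~\ref{sec:diagramforhzposition}, tracking how the flattening construction of Lemma~\ref{generictoband} behaves along the given isotopy.

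For part~(i), since $\Sigma_s$ passes through generic surfaces for $s\in[0,1]$, the heights of the critical points of $h\vert_{\Sigma_s}$ remain distinct and their Morse indices remain constant throughout. I would subdivide $[0,1]$ at the finitely many times where the combinatorial data used by the flattening of Lemma~\ref{generictoband} changes --- namely, the order of critical heights relative to the heights of components of $\K$, and the combinatorial pattern of the projected bands with respect to attaching circles and dotted circles in $M_{3/2}$. On each subinterval the flattening can be performed continuously in $s$, producing a family of horizontal-vertical surfaces $\Sigma_s'$ isotopic to $\Sigma_s$ through $h$-regular and $h$-disjoint horizontal and vertical isotopies. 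At each subdivision time, two flattenings of the same generic surface yield diagrams related by Morse-preserving band moves via Proposition~\ref{horizontallemma} and Remark~\ref{fixhorizontallemma}, and concatenating all of these subintervals gives the desired sequence taking $\D$ to $\D'$.

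For part~(ii), let $s_0$ be the singular time and let $\Sigma_\pm = \Sigma_{s_0\pm\epsilon}$ for small $\epsilon>0$; by part~(i) it suffices to show $(\K,L_{\Sigma_-},v_{\Sigma_-})$ and $(\K,L_{\Sigma_+},v_{\Sigma_+})$ differ by Morse-preserving band moves. In the $A_1^+A_1^+$ case, two extrema swap heights; after flattening they become horizontal disks lying uniformly below (if minima) or above (if maxima) all bands, so interchanging them by a local vertical isotopy does not move either disk across any band, and the projections used to build $(L_\Sigma,v_\Sigma)$ are unchanged. The $A_1^+A_1^-$ case is analogous: the flat disk associated to the extremum still lies uniformly above or below every saddle band after the swap. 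The $A_1^-A_1^-$ case is the central one: two saddles $b_1,b_2$ swap heights without otherwise moving in $X^4$, so their projections to $M_{3/2}$ agree on the nose; only the order in which they are processed (to resolve endpoints on each other, interior band-band intersections, and intersections with dotted circles and $2$-handle attaching circles) is reversed, and the case analysis in the proof of Proposition~\ref{horizontaltoband} exhibits each such reordering as a composition of band slides, band swims, $2$-handle-band slides, $2$-handle-band swims, and dotted circle slides.

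For part~(iii), an $A_2$-singularity locally models the birth or death of a canceling pair of critical points of $h\vert_\Sigma$ --- either a minimum-saddle pair or a maximum-saddle pair. In the banded unlink diagram this is exactly a cup or cap move on $(L,v)$: a small unknotted component of $L$ together with a cancelling band appear or disappear in a local neighborhood. Away from the singular point the argument of part~(i) applies, so $\D$ and $\D'$ differ by a single cup or cap move together with Morse-preserving band moves. The main obstacle throughout is the $A_1^-A_1^-$ subcase of~(ii), where one must verify that every combinatorial interaction between the two swapping bands is resolved by exactly one Morse-preserving band move; however, this is essentially a repetition of the analysis in Proposition~\ref{horizontaltoband} with the order of the two bands reversed, so every needed move is already in the band-move toolkit.
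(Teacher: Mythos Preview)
Your argument is correct and follows the same overall structure as the paper's proof: reduce (i) to the horizontal-vertical machinery, handle (ii) by analyzing a local vertical swap of two critical points, and identify (iii) with a cup or cap move.

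The one place where your treatment differs substantively from the paper is the $A_1^-A_1^-$ subcase of (ii), which you single out as ``the central one.'' You prepare to resolve possible band-band interactions (endpoints on one another, interior crossings) by invoking the slide/swim analysis of Proposition~\ref{horizontaltoband}. The paper instead makes a cleaner observation: after perturbing so that the isotopy near the singularity is a purely vertical exchange, the two saddle bands lie in a common level set $M_t$ at the singular instant, and since the surface is embedded they are disjoint there. Hence their projections to $M_{3/2}$ are disjoint, and the swap does not change the diagram at all. Your case analysis is therefore vacuous rather than wrong; the interactions you set out to resolve simply do not occur. This makes $A_1^-A_1^-$ the easiest subcase, not the hardest, and avoids having to check that Proposition~\ref{horizontaltoband}'s analysis (which compares different \emph{choices} for a fixed height ordering) also covers different \emph{orderings}.
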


\begin{proof}
\noindent (i) An isotopy through generic surfaces preserves the level sets $\Sigma\cap h^{-1}(t)$ up to isotopy and reparametrization of $h$. Therefore, this isotopy does not affect the construction of Lemma~\ref{generictoband}. The unlink $L$ is taken to $L'$ by isotopy in $h^{-1}(3/2)$, so $L'$ can be obtained from $L$ by isotopy in $E(\K)$ and dotted circle slides. Each band projection is then isotoped in $E(K)$ unless the corresponding band meets an:
\begin{itemize}
\item ascending manifold of an index-$2$ critical point of $h$, inducing a $2$-handle-band slide,
\item descending manifold of an index-$2$ critical point of $h$, inducing a $2$-handle-band swim,
\item ascending manifold of an index-$1$ critical point of $h$, inducing a dotted circle slide.
\end{itemize}


\noindent (ii) Perturb the isotopy so that near the singularity, the isotopy is a vertical exchange of heights between two critical points. Say the isotopy goes from $\Sigma$ to $\Sigma_0$, then vertically to $\Sigma_1$, and then to $\Sigma'$, where the $A_1^{\pm} A_1^{\pm}$ singularity appears during the isotopy from $\Sigma_0$ to $\Sigma_1$. We consider each of the following possibilities: 
\begin{itemize}
\item If the exchanged critical points are both extrema, then this does not affect the construction of Lemma~\ref{generictoband}.
\item If the exchanged critical points are an extremum and an index-$1$ critical point, then this does not affect the construction of Lemma~\ref{generictoband}. (These critical points correspond to a minimum or maximum disk and a band which does not intersect the interior of that disk.)
\item If the exchanged critical points are both index-$1$, then they correspond to bands whose projections to $M_{3/2}$ must be disjoint (since at some point during this vertical isotopy, they live in a common $M_t$). Therefore, this does not affect the diagram resulting from Lemma~\ref{generictoband}.
\end{itemize} 
Therefore, $\Sigma_0$ and $\Sigma_1$ have banded unlink diagrams equivalent up to Morse-preserving band moves. The claim follows from part (i).

\noindent (iii) When passing an $A_2$-singularity (away from critical points of $h$) a nondegenerate saddle and extremum appear or disappear~\cite[Claim 4.3(iv)]{kearton2008all}.  In the case when the extremum created is a minimum this corresponds to performing a cup move, while in the case of a maximum the banded unlinks are related by a cap move.  Away from the $A_2$-singularity, this is an isotopy through generic surfaces, so the claim follows from part (i).
\end{proof}


The following analysis appears in~\cite{kearton2008all}. Although they state this lemma in $S^4$ rather than an arbitrary $4$-manifold $X^4$, their techniques hold generally.

\begin{lemma}[{\cite[Claim 4.3]{kearton2008all}}]
\label{lemma:generic}\
\begin{enumerate}
\item The subspace $\overline{\S_h}$ is codimension-$1$ in $\CS$.
\item Every element of $\CS\setminus\overline{\S_h}$ is generic.
\item Any $h$-disjoint isotopy of a surface in $X^4$ can be deformed to an $h$-disjoint isotopy so that all intermediate surfaces are generic except for finitely many singularities as in Definition~\ref{def:singular}.
\end{enumerate}
\end{lemma}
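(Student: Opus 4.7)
The plan is to view the restriction map $\Sigma \mapsto h|_\Sigma$ as taking $\CS$ into the space of smooth functions on the model surface $F$ with the Whitney topology, and then apply standard singularity theory and transversality, as in Kearton--Kurlin~\cite{kearton2008all}, to the resulting stratification by critical-point types. The four singularity types listed in Definition~\ref{def:singular} are precisely the codimension-$1$ failures of genericity for a family of Morse functions on a surface: either two Morse critical values coincide (the three $A_1^{\pm}A_1^{\pm}$ cases), or a single critical point becomes a cusp (the $A_2$ case, locally $x^2-y^3$). Every deeper degeneracy---three coincident critical values, two simultaneous cusps, an $A_3$ singularity, non-isolated critical points, etc.---occurs in strata of codimension at least $2$.

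For (i), each stratum has a standard local normal form. The $A_1^{\pm}A_1^{\pm}$-strata are locally cut out by the single scalar equation $h(p)=h(q)$ between two otherwise independent Morse critical points; the $A_2$-stratum is the familiar codimension-$1$ normal form obtained by requiring the Hessian of $h|_\Sigma$ to drop rank by $1$ at a point while the next jet carries a nonzero cubic term along the kernel direction. Multi-jet transversality in the Whitney topology then gives that each stratum is locally a codimension-$1$ submanifold of $\CS$, so $\overline{\S_h}$ has codimension $1$. For (ii), if $\Sigma \in \CS \setminus \overline{\S_h}$ then in particular $\Sigma$ avoids the closure of the $A_2$-stratum, so $h|_\Sigma$ has no degenerate critical points and is Morse; similarly $\Sigma$ avoids the closures of the $A_1A_1$-strata, so its Morse critical values are all distinct. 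Thus $\Sigma$ is generic.

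For (iii), an $h$-disjoint isotopy is a smooth path $\gamma:I\to\CS$ whose image avoids the critical set of $h$. Since the image of $\gamma$ is compact, $\overline{\S_h}$ is stratified with top stratum of codimension $1$, and every deeper stratum has codimension at least $2$, parametric (Thom) transversality allows an arbitrarily small perturbation of $\gamma$, supported in the complement of the critical points of $h$ so as to keep it $h$-disjoint, after which $\gamma$ meets only the codimension-$1$ strata and meets them transversally. Transversality together with compactness of $I$ then forces the perturbed isotopy to cross $\overline{\S_h}$ at only finitely many parameter values, each corresponding to one of the four singularity types.

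The main obstacle is purely technical: carrying out the multi-jet transversality in the correct function space (the Whitney $C^\infty$ topology on embeddings $F \hookrightarrow X^4$) and verifying that the requisite local perturbations can be arranged so that the isotopy remains disjoint from the critical set of $h$. Once these are in place, the three statements follow from the same local calculations used by Kearton--Kurlin~\cite{kearton2008all} in the $S^4$ setting, since each verification depends only on the behavior of $h$ on a neighborhood of $\Sigma$, which is identical to the $S^4$ picture after $\Sigma$ has been made disjoint from the critical points of $h$.
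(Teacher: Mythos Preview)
The paper does not give its own proof of this lemma: it simply cites \cite[Claim 4.3]{kearton2008all} and remarks that ``although they state this lemma in $S^4$ rather than an arbitrary $4$-manifold $X^4$, their techniques hold generally.'' Your proposal is a correct sketch of exactly that underlying Kearton--Kurlin argument (multi-jet transversality and the standard codimension stratification of $C^\infty(F,\mathbb{R})$ by Morse singularity type), together with the observation that the verifications are local near $\Sigma$ and hence carry over from $S^4$ to $X^4$ once $\Sigma$ is disjoint from the critical points of $h$---which is precisely the paper's justification for importing the result.
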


\begin{lemma}\label{lemma:regular}
Let $\Sigma$ and $\Sigma'$ be surfaces which are disjoint from the critical points of $h$. Let $f$ be an isotopy with $f( F\times I)=\Sigma$ and $f( F\times 1)=\Sigma'$. Then $f$ can be deformed to an $h$-disjoint isotopy, fixing $f\vert_{ F\times 0}$ and $f\vert_{ F\times 1}$.
\end{lemma}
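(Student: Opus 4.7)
The plan is to apply a standard general-position argument, exploiting the fact that the critical set of $h$ has codimension $4$ in $X^4$.  Write $C=\operatorname{Crit}(h)$, a finite (hence $0$-dimensional) subset of $X^4$, and let $g_s\colon X^4\to X^4$ be the ambient isotopy determined by $f$.  First I would introduce the trace map $T\colon F\times I\to X^4$ defined by $T(x,s)=g_s(\operatorname{pr}_X\circ f(x,0))$; this is smooth, its slices $T\vert_{F\times\{s\}}$ are embeddings, and by hypothesis $T(F\times\{0,1\})\cap C=\emptyset$.

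The key numerical observation is that $\dim(F\times I)+\dim(C)=3+0<4=\dim(X^4)$, so transversality of $T$ to $C$ is equivalent to avoidance of $C$.  The relative form of Thom transversality (holding $T$ fixed on the closed subset $F\times\{0,1\}$, where it already avoids $C$ and is therefore vacuously transverse to $C$) then produces a $C^\infty$-small perturbation $T'$ of $T$ with $T'=T$ on $F\times\{0,1\}$ and $T'(F\times I)\cap C=\emptyset$.

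Next, since the set of smooth embeddings of the compact surface $F$ into $X^4$ is open in the Whitney $C^\infty$ topology, a sufficiently small perturbation keeps each slice $T'\vert_{F\times\{s\}}$ an embedding.  Thus $T'$ is a genuine isotopy from $\Sigma$ to $\Sigma'$ whose image avoids $C$.  By the isotopy extension theorem I can promote $T'$ to an ambient isotopy $g'_s$ with $g'_0=\id$, giving an $h$-disjoint isotopy $f'$ with the same endpoints as $f$.

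To realize $f'$ as a deformation of $f$ rel $F\times\{0,1\}$, I would connect $T$ to $T'$ by a straight-line homotopy in a tubular neighborhood of $T(F\times I)\subset X^4$, or equivalently by patching straight-line homotopies in local charts with a partition of unity; for sufficiently small perturbations the intermediate maps are still slicewise embeddings, and isotopy extension applied at each stage yields a continuous path in the space of ambient isotopies with fixed endpoints, connecting $f$ to $f'$.  There is no real obstacle here: the only mild subtlety is the relative transversality step, but this is standard because the prescribed boundary values already lie in the dense open set of smooth maps $F\times I\to X^4$ which miss the finite set $C$.
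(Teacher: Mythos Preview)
Your argument is correct and is essentially the same general-position/transversality argument as the paper's: both exploit that the trace of the isotopy is $3$-dimensional while the critical set contributes at most one extra dimension, so a generic perturbation rel endpoints achieves disjointness. The paper phrases this inside $X^4\times I$ (perturbing the codimension-$2$ submanifold $f(F\times I)$ off the $1$-manifold $H\times I$), whereas you project to $X^4$ and perturb there; your version is more explicit about preserving the slicewise-embedding condition and about the deformation back to $f$, but the underlying idea is identical.
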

\begin{proof}
Let $H\subset X^4$ be the critical points of $h$. Then $ F\times I$ is a smooth codimension-$2$ submanifold of $X^4\times I$, while $H\times I$ is a dimension-$1$ submanifold of $X^4\times I$. We may generically perturb $ F\times I$ (and hence $f$) rel boundary to be disjoint from $H\times I$, to obtain an $h$-disjoint isotopy.
\end{proof}

\begin{lemma}\label{finalmainlemma}
Let $\Sigma$ and $\Sigma'$ be isotopic generic surfaces embedded in $X^4$ which are both disjoint from critical points of $h$. Say that $\Sigma$ and $\Sigma'$ have banded unlink diagrams $\D$ and $\D'$ respectively. Then $\D$ can be transformed into $\D'$ by band moves.
\end{lemma}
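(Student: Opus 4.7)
The plan is to reduce to the three cases already handled in Lemma~\ref{lemma:simpleiso} by first putting the given isotopy into a standard form.

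First, by Lemma~\ref{lemma:regular} we may deform the given isotopy $f\colon F\times I\to X^4\times I$ from $\Sigma$ to $\Sigma'$, rel $F\times\partial I$, to an $h$-disjoint isotopy $\tilde f$; this uses only that $\Sigma$ and $\Sigma'$ are themselves already disjoint from the critical points of $h$, so no modification is required near the endpoints. Next, by Lemma~\ref{lemma:generic}(iii), we may perturb $\tilde f$ (again rel endpoints, since $\Sigma$ and $\Sigma'$ are generic and hence lie in the open set $\CS\setminus\overline{\S_h}$) to an $h$-disjoint isotopy $\hat f$ for which $\hat f(F\times\{s\})$ is generic for all $s\in I$ except at finitely many times $0<s_1<s_2<\cdots<s_k<1$, at each of which $\hat f(F\times\{s_i\})$ is an $A_1^+A_1^+$-, $A_1^+A_1^-$-, $A_1^-A_1^-$-, or $A_2$-singularity.

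Now choose regular times $s_0=0<s_1'<s_1<s_2'<s_2<\cdots<s_k<s_{k+1}'=1$ with $s_i'\in(s_{i-1},s_i)$, and let $\Sigma_i=\hat f(F\times\{s_i'\})$, so each $\Sigma_i$ is a generic surface disjoint from the critical points of $h$. By Lemma~\ref{generictoband}, each $\Sigma_i$ has a well-defined (up to Morse-preserving band moves) banded unlink diagram $\D_i$, with $\D_0=\D$ and $\D_{k+1}=\D'$ up to such moves. On the subinterval $[s_0,s_1']$, the isotopy $\hat f$ passes through only generic surfaces, and the same is true on each $[s_i,s_{i+1}']$ and $[s_i',s_i]$ --- no wait, let me re-index: on $[s_i',s_{i+1}']$ the isotopy passes through generic surfaces together with exactly one singularity (at time $s_i$) of one of the four types. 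In the $A_1^{\pm}A_1^{\pm}$ cases we apply Lemma~\ref{lemma:simpleiso}(ii) and in the $A_2$ case we apply Lemma~\ref{lemma:simpleiso}(iii) to conclude that $\D_i$ and $\D_{i+1}$ are related by (Morse-preserving or general) band moves. On the initial segment $[s_0,s_1']$ and the final segment $[s_k,s_{k+1}']$ the isotopy passes only through generic surfaces, and Lemma~\ref{lemma:simpleiso}(i) applies to show $\D_0$ is related to $\D_1$, and $\D_k$ to $\D_{k+1}$, by Morse-preserving band moves.

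Concatenating these finitely many sequences of band moves transforms $\D$ into $\D'$. The only step requiring genuine care is ensuring that the perturbations provided by Lemmas~\ref{lemma:regular} and~\ref{lemma:generic} can be done rel endpoints, which is automatic since the endpoint surfaces are generic and $h$-disjoint by hypothesis and hence avoid the bad loci being perturbed against.
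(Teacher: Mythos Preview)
Your proof is correct and follows essentially the same approach as the paper: make the isotopy $h$-disjoint via Lemma~\ref{lemma:regular}, perturb via Lemma~\ref{lemma:generic}(iii) so that only finitely many singular surfaces occur, then break the isotopy into subintervals containing at most one singularity and invoke Lemma~\ref{lemma:simpleiso} on each. The only differences are cosmetic (your sampling times $s_i'$ versus the paper's $s_i+\epsilon$, and your explicit remark about perturbing rel endpoints); the minor indexing stumble and the stray ``final segment $[s_k,s_{k+1}']$'' are harmless since that interval is already accounted for among the $[s_i',s_{i+1}']$.
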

\begin{proof}
 Let $f: F\times I\to X^4\times I$ be an isotopy from $\Sigma=f( F\times 0)$ to $\Sigma'=f( F\times 1)$ By Lemma~\ref{lemma:regular}, $f$ can be taken to be $h$-disjoint. By Lemma~\ref{lemma:generic}, $f$ can be perturbed slightly so that $f( F\times s)$ is generic except for finitely many values of $s$ (at which time $f( F\times s)$ is a singularity as in Def.~\ref{def:singular}),  and $f$ is still $h$-disjoint. Fix  $0<s_1<\cdots<s_n<1$ so that $f( F\times s)$ is generic if $t\not\in\{s_i\}$, and $f( F\times s_i)$ is a singularity as in Definition~\ref{def:singular}.

For $i=1,\ldots, n-1$, let $\Sigma_i:=f( F\times(s_i+\epsilon))$. Let $\Sigma_0:=\Sigma$ and $\Sigma_n:=\Sigma'$. Let $\D_i=(\K,L_{\Sigma_i},v_{\Sigma_i})$. By Lemma~\ref{lemma:simpleiso}, $\D_i$ is obtained from $\D_{i-1}$ by band moves. Thus, $\D_n=\D'$ is obtained from $\D_0=\D$ by band moves.
\end{proof}

Any surface in $X^4$ can be perturbed to be generic and away from critical points of $h$. Therefore, Lemma~\ref{finalmainlemma} allows us to make the following definition, completing the proof of Theorem~\ref{thm:generalswenton}.

\begin{definition}
Let $\Sigma$ be a surface embedded in $X^4$. Let $\Sigma'$ be a generic surface which is disjoint from critical points of $h$ so that $\Sigma$ is isotopic to $\Sigma'$. We say that $(\K,L_{\Sigma'},v_{\Sigma'})$ {\emph{describes}} or {\emph{is a banded unlink diagram for}} $\Sigma$. By Lemma~\ref{finalmainlemma}, this diagram is well-defined up to band moves.
\end{definition}


\label{endsec3}

\section{Uniqueness of bridge trisections}\label{sec:bridge}

First, we recall the definition of trisection of a closed $4$-manifold.
\begin{definition}[\cite{gay2016trisecting}]
Let $X^4$ be a closed $4$-manifold. A {\emph{$(g,k)$-trisection}} of $X^4$ is a triple $(X_1,X_2,X_3)$ where
\begin{itemize}
\item $X_1\cup X_2\cup X_3=X^4$,
\item $X_i\cong\natural_{k_i} S^1\times B^3$,
\item $X_i\cap X_j=\boundary X_i\cap\boundary X_j\cong \natural_g S^1\times B^2$
\item $X_1\cap X_2\cap X_3\cong \Sigma_g$,
\end{itemize}

where $\Sigma_g$ is the closed orientable surface of genus $g$. Here, $g$ is an integer while $k=(k_1,k_2,k_3)$ is a triple of integers. If $k_1=k_2=k_3$, then the trisection is said to be {\emph{balanced}}.
\end{definition}

Briefly, a trisection is a decomposition of a $4$-manifold into three elementary pieces, analogous to a Heegaard splitting of a $3$-manifold into two elementary pieces. Intuitively, one should think that the need for an ``extra'' piece of this decomposition when the dimension increases corresponds to an ``extra'' type of handle. That is, given a Heegaard splitting $M^3=H_1\cup H_2$, one can view $H_1$ as containing the $0$- and $1$-handles of $M^3$ while $H_2$ contains the $2$- and $3$-handles of $M^3$. Similar is true for a trisection $(X_1,X_2,X_3)$ of $X^4$; one can view $X_1$ as containing the $0$- and $1$-handles of $X^4$ and $X_3$ as containing the $3$- and $4$-handles of $X^4$ while $X_2$ contains the $2$-handles of $X^4$. See~\cite{meier2016propertyr} for a clear description of a trisection from this point of view.

Note that from the definition, $(\Sigma_g, X_i\cap X_j, X_i\cap X_k)$ gives a Heegaard splitting of $\partial X_i$. By Laudenbach and Po\'{e}naru~\cite{laudenbach1972handlebodies}, $X^4$ is specified by its {\emph{spine}}, $\Sigma_g\cup_{i,j}(X_i\cap X_j)$. Therefore, we usually describe a trisection $(X_1,X_2,X_3)$ by a {\emph{trisection diagram}} $(\Sigma_g,\alpha,\beta,\gamma)$ where each of $\alpha,\beta,$ and $\gamma$ consist of $g$ independent curves bounding disks in the handlebodies $X_1\cap X_2, X_2\cap X_3, X_1\cap X_3$ respectively.

We do not require much knowledge about trisections for this paper. For more exposition of trisections, refer to~\cite{gay2016trisecting}.

\begin{definition}
The {\emph{standard trisection of $S^4$}} is the unique $(0,0)$-trisection $(X_1,$ $X_2,$ $X_3)$. View $S^4=\R^4\cup\infty$, with coordinates $(x,y,r,\theta)$ on $\R^4$, where $(x,y)$ are Cartesian planar coordinates of a plane and $(r,\theta)$ are polar planar coordinates. Up to isotopy, $X_i=\{\theta\in[2\pi/3\cdot i, 2\pi/3\cdot(i+1)]\}\cup\infty$. Then $X_i\cong B^4$, $X_i\cap X_{i+1}=\{\theta=2\pi/3\cdot(i+1)\}\cup\infty\cong B^3$, and $X_1\cap X_j\cap X_k=\{r=0\}\cup\infty\cong S^2$.
\end{definition}

In~\cite{meier2017bridge}, Meier and Zupan introduce bridge trisections of surfaces in $S^4$. In~\cite{meier2018bridge}, they extend this notion to surfaces in an arbitrary closed $4$-manifold.

\begin{definition}[\cite{meier2017bridge,meier2018bridge}]
Let $S$ be a surface embedded in $X^4$. Let $\tri=(X_1,X_2,X_3)$ be a trisection of a closed $4$-manifold $X^4$. We say that $S$ is in $(c,b)$-bridge position with respect to $\tri$ if
\begin{itemize}
\item $S\cap X_i$ is a disjoint union of $c$ boundary parallel disks,
\item $S\cap X_i\cap X_j$ is a trivial tangle of $b$ arcs.
\end{itemize}

Here, $b$ is an integer and $c=(c_i,c_j,c_k)$ is a triple of integers. Note $\chi(S)=\sum c_i-b$. 
\end{definition}

See Figure~\ref{fig:rp2trisection} for an example of a surface in bridge position.

\begin{figure}
\includegraphics[width=.75\textwidth]{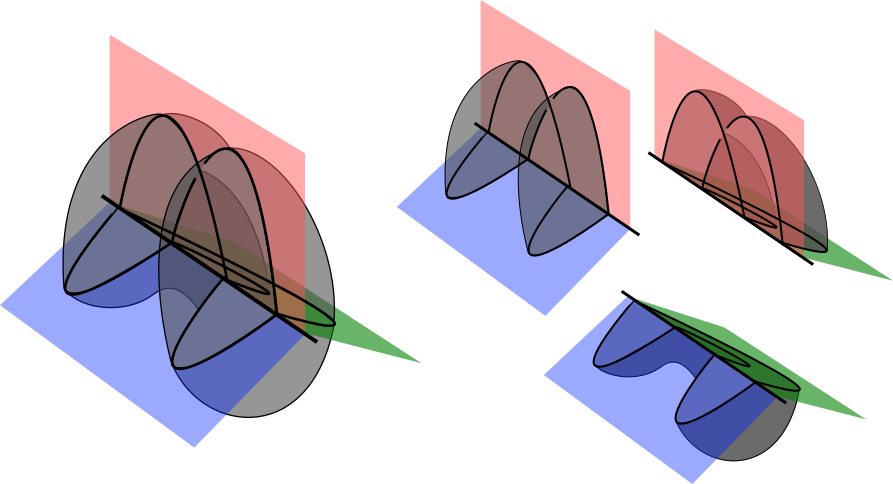}
\caption{An $\RP^2$ in $S^4$ in $(1,1,1;2)$-bridge position with respect to the standard trisection. Left: We draw the whole surface (projected to $\R^3$. The three planes indicate the three $3$-balls $\{X_i\cap X_j\}$. Right: We show each disk system (each have one component) individually.}\label{fig:rp2trisection}
\end{figure}

\begin{theorem}[\cite{meier2017bridge,meier2018bridge}]\label{thm:triunique}
Let $S$ be a surface embedded in $X^4$ with trisection $(X_1,X_2,X_3)$. Then for some $c$ and $b$, $S$ can be isotoped into $(c,b)$-bridge position with respect to $\tri$. We may take $c_1=c_2=c_3$.
\end{theorem}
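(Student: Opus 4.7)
The plan is to reduce to the banded unlink machinery of Section~\ref{sec:swenton}. The starting point is the correspondence between trisections and handle decompositions noted in the introduction to this section: $\tri = (X_1,X_2,X_3)$ is realized by a handle decomposition of $X^4$ in which $X_1$ is the union of the $0$- and $1$-handles, $X_2$ is the union of the $2$-handles, and $X_3$ is the union of the $3$- and $4$-handles. I would fix a self-indexing Morse function $h:X^4\to[0,4]$ realizing this decomposition, so that, after adjusting collars, $X_1 = h^{-1}[0,3/2]$, $X_2 = h^{-1}[3/2,5/2]$, and $X_3 = h^{-1}[5/2,4]$.

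Next, I would apply Theorem~\ref{normalform} and Lemma~\ref{diagramfromband} to isotope $S$ into banded unlink position with respect to $h$. After this move, $S\cap X_1$ is already a collection of boundary-parallel minimum disks, $S\cap X_3$ is a collection of boundary-parallel maximum disks, and $S\cap X_2$ is a thickened banded unlink sitting in a neighborhood of the trisection surface $\Sigma_g = X_1\cap X_2\cap X_3$. To convert $S\cap X_2$ into a disjoint union of boundary-parallel disks I would split each band of $S$ across $\Sigma_g$: the lower half of each band is pushed into $X_1$ by a cup move (creating one extra minimum disk plus a trivial arc in $X_1\cap X_2$), and the upper half is pushed into $X_3$ by a cap move (similarly adding a trivial arc in $X_2\cap X_3$). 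After doing this for all bands, $S\cap X_2$ becomes a disjoint union of disks, and each tangle $S\cap(X_i\cap X_j)$ is a trivial tangle by construction.

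The step I expect to be the technical heart of the argument is verifying that the arcs produced in all three pairwise handlebodies form \emph{simultaneously} trivial tangles; this reduces to the observation that after banded unlink position the bands can be isotoped to lie in an arbitrarily thin product neighborhood of $\Sigma_g$, so that every arc is isotopic rel endpoints into $\Sigma_g$. Finally, to enforce the balance $c_1=c_2=c_3$, I would apply a finite sequence of balancing perturbations: the perturbation move on a bridge trisection~\cite{meier2018bridge} adds a disk to each sector simultaneously, while additional cup/cap moves followed by isotopy through $\Sigma_g$ can redistribute disks between sectors without changing $S$ up to isotopy. Combining these raises the triple $(c_1,c_2,c_3)$ to its common maximum.
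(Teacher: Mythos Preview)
The paper does not prove this theorem; it is quoted from~\cite{meier2017bridge,meier2018bridge}. The Meier--Zupan argument (summarized in the paper via Definition~\ref{def:bridge} and Lemmas~\ref{lemma:bridge} and~\ref{lemma44}) proceeds by first producing a banded unlink diagram for $S$, then isotoping that banded unlink into \emph{bridge position} with respect to the Heegaard splitting $M_{3/2}=H\cup_F H'$ determined by $\tri$, and finally observing that this Heegaard bridge position induces a bridge trisection of $S$.

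Your proposal has a structural error and a missing step. The structural error is the identification $X_2=h^{-1}[3/2,5/2]$: that preimage is a cobordism from $M_{3/2}$ to $M_{5/2}$, not a $4$-dimensional $1$-handlebody $\natural_{k_2}S^1\times B^3$. The relationship between a trisection and a Morse function is more delicate (see Lemma~\ref{lemma44} and~\cite[Lemma~14]{gay2016trisecting}); in particular the pairwise intersections $X_i\cap X_j$ are $3$-dimensional handlebodies lying inside the level set $M_{3/2}$, not level sets themselves. Consequently, ``$S\cap X_2$ is a thickened banded unlink sitting in a neighborhood of $\Sigma_g$'' does not follow from banded unlink position: the banded unlink lives in $M_{3/2}$ but has no reason to be near the Heegaard surface $F=\Sigma_g$. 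The missing step is precisely Lemma~\ref{lemma:bridge}: one must isotope $L\cup v$ inside $M_{3/2}$ so that $L$ is in bridge position with respect to $H\cup_F H'$ and the bands lie in $H$ with shadows dual to the shadows of $L\cap H$. Without this, your band-splitting description via cup/cap moves does not produce trivial tangles in the three handlebodies $X_i\cap X_j$. (Also, a perturbation increases exactly one $c_i$, not all three; balancing is achieved by perturbing selectively in the deficient sectors.)
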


Because a collection of boundary parallel disks in $\natural (S^1\times B^3)$ is uniquely determined by its boundary (up to isotopy rel boundary), a surface $S$ in bridge position is determined up to isotopy by $S\cap(\cup_{i\neq j}X_i\cap X_j)$. 

\begin{definition}
Let $X^4$ be a $4$-manifold with trisection $\tri(X_1,X_2,X_3)$.
We say that an isotopy $f$ of $X^4$ is {\emph{$\tri$-regular}} if $f_t(X_i)=X_i$ for each $i=1,2,3$ for all $t$.
\end{definition}

There is a natural perturbation of a surface in bridge position, analogous to perturbation of a knot in bridge position within a $3$-manifold.

\begin{definition}[\cite{meier2017bridge,meier2018bridge}]
Let $S\subset X^4$ be a surface in $(c,b)$-bridge position with respect to $\tri=(X_1,X_2,X_3)$. Let $\Delta\subset X_i\setminus\nu(S)$ be a properly embedded disk so that $\boundary\Delta$ consists of one arc in $\boundary\nu(S)$, one arc in $X_i\cap X_{i+1}$, and one arc in $X_i\cap X_{i-1}$. Obtain $S'$ by compressing $S$ along $\Delta$. Note $S'$ is isotopic to $S$ and is in $(c',b+1)$ bridge position, where $c'_i=c_i+1, c'_{i+1}=c_{i+1},c'_{i-1}=c_{i-1}$. We say that $S'$ is obtained from $S$ by elementary perturbation, while $S$ is obtained from $S'$ by elementary deperturbation (see Figure~\ref{fig:perturb}).
\end{definition}

\begin{figure}
\includegraphics[width=.9\textwidth]{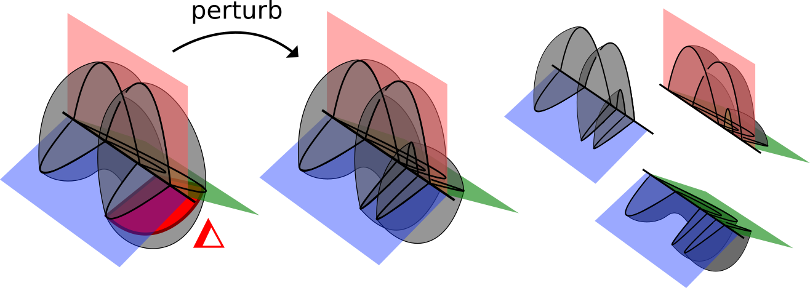}
\caption{Left: $\Sigma\cong\RP^2$ in $(1,1,1;2)$-bridge position in $S^4$ (with respect to the standard trisection). We indicate a disk $\Delta$ along which we perturb. Right: after perturbing, $\Sigma$ is in $(1,2,1;3)$-bridge position (up to permuting $X_1,X_2,X_3$).}\label{fig:perturb}
\end{figure}

Theorem~\ref{thm:triunique} shows existence of bridge trisections. The following theorem of~\cite{meier2017bridge} gives uniqueness of bridge trisections with respect to the standard trisection of $S^4$.

\begin{theorem}[\cite{meier2017bridge}]\label{s4theorem}
Let $S$ and $S'$ be surfaces in bridge position with respect to the standard trisection $\tri_0$ of $S^4$. Suppose $S$ is isotopic to $S'$. Then $S$ can be taken to $S'$ by a sequence of perturbations and deperturbations, followed by a $\tri_0$-regular isotopy.
\end{theorem}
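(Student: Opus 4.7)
The plan is to reduce Theorem~\ref{s4theorem} to Swenton's calculus on banded unlink diagrams (Theorem~\ref{thm:swenton}) by translating carefully between bridge trisections of surfaces in $S^4$ and banded unlink diagrams with respect to the empty Kirby diagram $\K_0$. First I would build a correspondence: to a surface $S$ in $(c,b)$-bridge position with respect to $\tri_0=(X_1,X_2,X_3)$, associate a banded unlink diagram $(\K_0,L_S,v_S)$. Choose a self-indexing Morse function $h$ on $S^4$ with a single index-$0$ and single index-$4$ critical point so that the bisecting $3$-sphere $X_1\cap X_2$ is identified with $M_{3/2}$, and so that $S\cap X_1$ becomes the minimal disks (below $M_{3/2}$) while $S\cap X_2\cup S\cap X_3$ becomes the maximal disks (above $M_{3/2}$), after pushing the tangles in $X_i\cap X_j$ to horizontal band neighborhoods near $M_{3/2}$. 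Concretely, $L_S$ is the closure of the trivial tangle $S\cap (X_1\cap X_2)$ into an unlink using the disks in $S\cap X_1$, and the bands $v_S$ arise from the other two tangles. By Lemma~\ref{diagramfromband}, $\Sigma(\K_0,L_S,v_S)$ is isotopic to $S$, and $(\K_0,L_S,v_S)$ is canonically determined by the bridge trisection up to isotopy in $S^3$.

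Next I would identify how the three operations on bridge trisections (perturbation, deperturbation, $\tri_0$-regular isotopy) act on the associated banded unlink diagram. A $\tri_0$-regular isotopy of $S$ manifestly induces only an isotopy of $L_S\cup v_S$ inside $S^3$, which is already a band move. An elementary perturbation, performed via a disk $\Delta\subset X_i$, introduces one new bridge arc and one new boundary-parallel disk on opposite sides of $M_{3/2}$; this precisely creates a small cancelling pair of critical points of $h|_S$, which modifies $(\K_0,L_S,v_S)$ by a single cup or cap move. Elementary deperturbation reverses this. Hence, all three trisection operations collectively realize exactly the subset of Swenton moves consisting of cups, caps, and isotopy in $S^3$.

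The main obstacle is then to realize the remaining Swenton moves — band slides and band swims — through sequences of perturbations, $\tri_0$-regular isotopies, and deperturbations. The strategy is to show that a band slide or band swim can always be performed after sufficiently many perturbations: by perturbing near the endpoints or interior of the band involved, one creates enough new trivial bridge arcs and boundary-parallel disks that the slide (respectively swim) can be carried out as an ambient $\tri_0$-regular isotopy using the newly created disks as a ``highway'' through the three sectors $X_1,X_2,X_3$; after completing the slide, one deperturbs back to remove the auxiliary bridge arcs. The delicate part is verifying that the slide/swim trajectory can be chosen to respect the trisection decomposition throughout, using the triviality of the tangles $S\cap X_i\cap X_j$ and the fact that boundary-parallel disk systems in $\natural_k(S^1\times B^3)$ are unique rel boundary.

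To conclude, apply Theorem~\ref{thm:swenton}: since $S$ and $S'$ are isotopic, the banded unlink diagrams $(\K_0,L_S,v_S)$ and $(\K_0,L_{S'},v_{S'})$ are related by a finite sequence of cup/cap moves, band slides, band swims, and isotopies in $S^3$. Translating each move back to the bridge trisection using the dictionary above, $S$ is taken to $S'$ by a sequence of perturbations, deperturbations, and $\tri_0$-regular isotopies. Finally, to achieve the precise form ``perturbations and deperturbations, followed by a $\tri_0$-regular isotopy'', I would push all $\tri_0$-regular isotopies to the end: since whether a perturbation or deperturbation is admissible depends only on the combinatorial data of the tangles up to $\tri_0$-regular isotopy, any $\tri_0$-regular isotopy appearing mid-sequence can be absorbed into the final one by conjugating the subsequent perturbation disks.
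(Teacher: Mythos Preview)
The paper does not prove Theorem~\ref{s4theorem} itself---it is cited from \cite{meier2017bridge}---but the proof of the generalization (Theorem~\ref{bridgethm}) follows exactly the strategy you outline: pass to banded unlink diagrams via Lemma~\ref{lemma44}, invoke Swenton's calculus, and realize each band move by a sequence of perturbations, deperturbations, and $\tri$-regular isotopy. So your overall approach is the intended one.

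There is one genuine gap. You note correctly that a $\tri_0$-regular isotopy of $S$ induces an isotopy of $(L_S,v_S)$ in $S^3$, but when you ``translate each move back'' you implicitly need the converse. An arbitrary isotopy of $(L,v)$ in $S^3$ need \emph{not} arise from a $\tri_0$-regular isotopy of $S$, because it need not preserve the bridge position of $(\K_0,L,v)$ with respect to the Heegaard splitting $H\cup_F H'$ of $M_{3/2}$ (Definition~\ref{def:bridge}). This is precisely the issue handled at the end of the proof of Theorem~\ref{bridgethm}: the paper invokes \cite{zupan2013bridge} (via \cite[Theorem~1.6]{meier2017bridge}) to show that two bridge positions of isotopic banded links are related by $3$-dimensional perturbation and deperturbation, which in turn induce perturbation and deperturbation of the bridge trisection. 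Without this ingredient your dictionary is one-directional for the ``isotopy in $S^3$'' move, and the argument does not close. A smaller point: a single elementary perturbation does not in general correspond to a single cup or cap move on the associated banded unlink; the correspondence in \cite{meier2017bridge} runs the other way, realizing each cup/cap (and each slide/swim) by a \emph{sequence} of perturbations and deperturbations.
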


Theorem~\ref{s4theorem} relies on Theorem~\ref{thm:swenton}, which is specific to $S^4$. In~\cite{meier2017bridge}, Meier and Zupan give an equivalence between bridge trisections and banded unlink diagrams, and then show how to translate moves on banded unlink diagrams into sequences of perturbations and deperturbations. These moves do not occur in order; in particular they do not show that all the deperturbations can come after the perturbations.

In~\cite{meier2018bridge}, Meier and Zupan state the following theorem as a conjecture, and comment that they believe it would follow from a generalized version of Theorem~\ref{thm:swenton} similarly to the proof of Theorem~\ref{s4theorem}. We will prove this theorem using Theorem~\ref{thm:generalswenton}.

\begin{theorem}\label{bridgethm}
Let $S$ and $S'$ be surfaces in bridge position with respect to a trisection $\tri$ of a closed $4$-manifold $X^4$. Suppose $S$ is isotopic to $S'$. Then $S$ can be taken to $S'$ by a sequence of perturbations and deperturbations, followed by a $\tri$-regular isotopy.
\end{theorem}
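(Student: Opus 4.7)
The plan is to apply Theorem~\ref{thm:generalswenton} after establishing a dictionary between bridge trisections and banded unlink diagrams. Starting from the trisection $\tri=(X_1,X_2,X_3)$, I would produce a self-indexing Morse function $h$ on $X^4$ whose middle level set $h^{-1}(3/2)$ agrees with $\boundary X_2$; this yields a Kirby diagram $\K$ in which the dotted circles record the cocores of the $1$-handles sitting in $X_1$ and the framed attaching circles record the $2$-handles sitting in $X_2$. A surface $S$ in $(c,b)$-bridge position then gives a canonical banded unlink diagram $\D_S=(\K,L,v)$: push the tangle $S\cap(X_3\cap X_1)$ slightly into $X_1$ to obtain an unlink $L\subset h^{-1}(3/2-\epsilon)$, and interpret the tangle $S\cap(X_1\cap X_2)$ as the family $v$ of bands attached to $L$. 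Bridge trisected surfaces in $X^4$ thus canonically determine banded unlink diagrams.

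Given two isotopic bridge-trisected surfaces $S$ and $S'$, Theorem~\ref{thm:generalswenton} produces a finite sequence of band moves relating $\D_S$ to $\D_{S'}$. The key technical step is to realize each band move at the bridge-trisection level by a combination of perturbations, deperturbations, and $\tri$-regular isotopy. For the classical Yoshikawa moves (cup, cap, slide, swim), Meier--Zupan's arguments from~\cite{meier2017bridge} are supported in local neighborhoods of the bands and so transfer verbatim to arbitrary $X^4$. The remaining band moves all involve the handle structure of $\K$, and I would realize them by local perturb--isotope--deperturb sequences inside the relevant sector: a $2$-handle-band slide via perturbation within $X_2$ followed by a $\tri$-regular slide over the $2$-handle region; a dotted circle slide via perturbation within $X_1$ (respectively $X_3$) that pushes a bridge arc across the $1$-handle cocore; and a $2$-handle-band swim via a small perturbation and deperturbation straddling the intersection of the band with a $2$-handle attaching disk in $X_2$.

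The main obstacle is to reorganize the resulting sequence so that every $\tri$-regular isotopy is pushed to the very end, matching the statement of the theorem. I would handle this by the standard conjugation argument: if $\phi$ is a $\tri$-regular ambient isotopy and $p$ is a perturbation (or deperturbation) supported in a small ball $B$, then $\phi\circ p=(\phi\circ p\circ\phi^{-1})\circ\phi$, and $\phi\circ p\circ\phi^{-1}$ is again a perturbation, now supported in $\phi(B)$. Since $\phi$ preserves each $X_i$, the conjugated modification remains a legitimate elementary perturbation/deperturbation of the bridge structure. Applying this commutation inductively from right to left collects all $\tri$-regular isotopies into a single terminal isotopy. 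The one subtlety I expect to verify carefully is that the perturb--isotope--deperturb gadgets used in the previous paragraph are genuinely supported in balls meeting the spine of $\tri$ cleanly, so that conjugation by $\tri$-regular isotopies preserves their elementary character; granted this, composing with the final $\tri$-regular isotopy needed to match bridge disk systems in each $X_i$ completes the proof.
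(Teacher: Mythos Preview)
Your overall plan coincides with the paper's: produce a Kirby diagram $\K$ from $\tri$, convert bridge trisections to banded unlink diagrams via the Meier--Zupan dictionary (Lemma~\ref{lemma44}), invoke Theorem~\ref{thm:generalswenton}, and then realize each band move on the bridge-trisection side. The conjugation trick you give for commuting $\tri$-regular isotopies past (de)perturbations is correct and is something the paper leaves implicit.

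There is, however, a real gap at the very end. After you have realized every named band move, the surface $\tilde S$ you have produced is in bridge position with a banded unlink diagram \emph{isotopic in $E(\K)$} to $(\K,L',v')$; you have not arranged that $\tilde S$ and $S'$ meet the spine $\bigcup_{i\neq j}(X_i\cap X_j)$ in the same tangles. An isotopy of a banded unlink in $E(\K)$ need not preserve the Heegaard splitting $H\cup_F H'$, so it does not in general lift to a $\tri$-regular isotopy; equivalently, two bridge positions of the same banded link are not related merely by isotopy fixing $F$. Your sentence ``composing with the final $\tri$-regular isotopy needed to match bridge disk systems'' therefore hides a nontrivial step: one must first match the tangles on the spine, and that costs further perturbations and deperturbations. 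The paper closes exactly this gap by invoking the uniqueness of bridge splittings of tangles in punctured handlebodies from~\cite{zupan2013bridge}, which supplies the extra (de)perturbations before the terminal $\tri$-regular isotopy.

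Two smaller divergences are worth noting. First, rather than realize a $2$-handle-band slide or swim by a perturb--isotope--deperturb gadget, the paper observes that once the guiding arc is isotoped into $H=X_1\cap X_2$ (at the cost of perturbations), the move itself is already a $\tri$-regular isotopy fixing $S\cap(X_1\cap X_2)$ and $S\cap(X_1\cap X_3)$; this is cleaner than what you sketch. Second, instead of realizing dotted circle slides one at a time, the paper proves a short Claim that they can be commuted algebraically to the end of the band-move sequence (since diagrams differing by dotted circle slides agree up to isotopy in the surgered manifold $S(\K)$), where they are then absorbed into the final application of~\cite{zupan2013bridge}. Your proposed direct realization of a dotted circle slide via ``perturbation within $X_1$ that pushes a bridge arc across the $1$-handle cocore'' is plausible but would need an argument.
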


Before proving Theorem~\ref{bridgethm}, we state several necessary definitions and lemmas from~\cite{meier2018bridge}.
%

\begin{definition}
Let $T$ be a tangle of properly embedded arcs in a solid handlebody $H$. We say that $T$ is {\emph{trivial}} if $T$ is boundary parallel, i.e. cobounding disjoint disks $D$ with arcs $T'\subset\boundary H$. We call $T'$ a {\emph{shadow}} of $T$. 
\end{definition}

\begin{definition}[\cite{meier2017bridge, meier2018bridge}]
Let $T$ be a trivial tangle in a handlebody $H$ with shadow $T'$. Let $v$ be a set of bands attached to $T$, with core arcs $\eta$ disjoint from a core of $H$. Project $\eta$ to $\boundary H$. We call $\eta$ a {\emph{shadow}} of $v$. We say that $\eta$ is \emph{dual} to $T'$ if $\mathring{\eta}\cap T'=\emptyset$ and each component of $\eta\cup T'$ is simply connected.
\end{definition}

\begin{notation}
Given a Kirby diagram $\K$, let $L_1\subset S^3\supset\K$ be the unlink of dotted circles (defining $1$-handles) in $\K$. Recall that $M_{3/2}= S^3$ surgered along $L_1$ with $0$-framing. Let $L_2\subset S^3$ be the link of $2$-handle attaching circles in $\K$.

Recall $E(\K)=S^3\setminus\nu(L_1\cup L_2)$.
\end{notation}

\begin{definition}[\cite{meier2018bridge}]\label{def:bridge}
Let $\mathcal{K}$ be a Kirby diagram. Let $H\cup_{F} H'$ be a Heegaard splitting of $M_{3/2}$ so that a core of $H$ contains $L_2$ and a core of $H'$ contains $L_1$.

Let $(\K,L,v)$ be a banded unlink. We say that $(\K,L,v)$ is in {\emph{bridge position}} with respect to the Heegaard splitting $H\cup_F H'$ if the following are true:
\begin{itemize}
\item $L\cap H$ and $L\cap H'$ are each trivial tangles with no closed components,
\item The bands $v$ are all contained in $H$ and there is a shadow $\eta$ of $v\subset H$ so that the surface framing $\boundary H$ induces on $\eta$ agrees with the framing $v$ induces on $\eta$,
\item There is a shadow $L'$ of $L\cap H$ so that $\eta$ and $L'$ are dual.
\end{itemize}
\end{definition}

See Figure~\ref{fig:bridgeposition} for an example of a banded unlink in bridge position. Meier and Zupan~\cite{meier2018bridge} show that every banded unlink can be put into bridge position with respect to a given Heegaard splitting of $M_{3/2}$ (the proof is similar to the fact that every knot in a Heegaard-split $3$-manifold can be isotoped into bridge position).

\begin{figure}
\includegraphics[width=.75\textwidth]{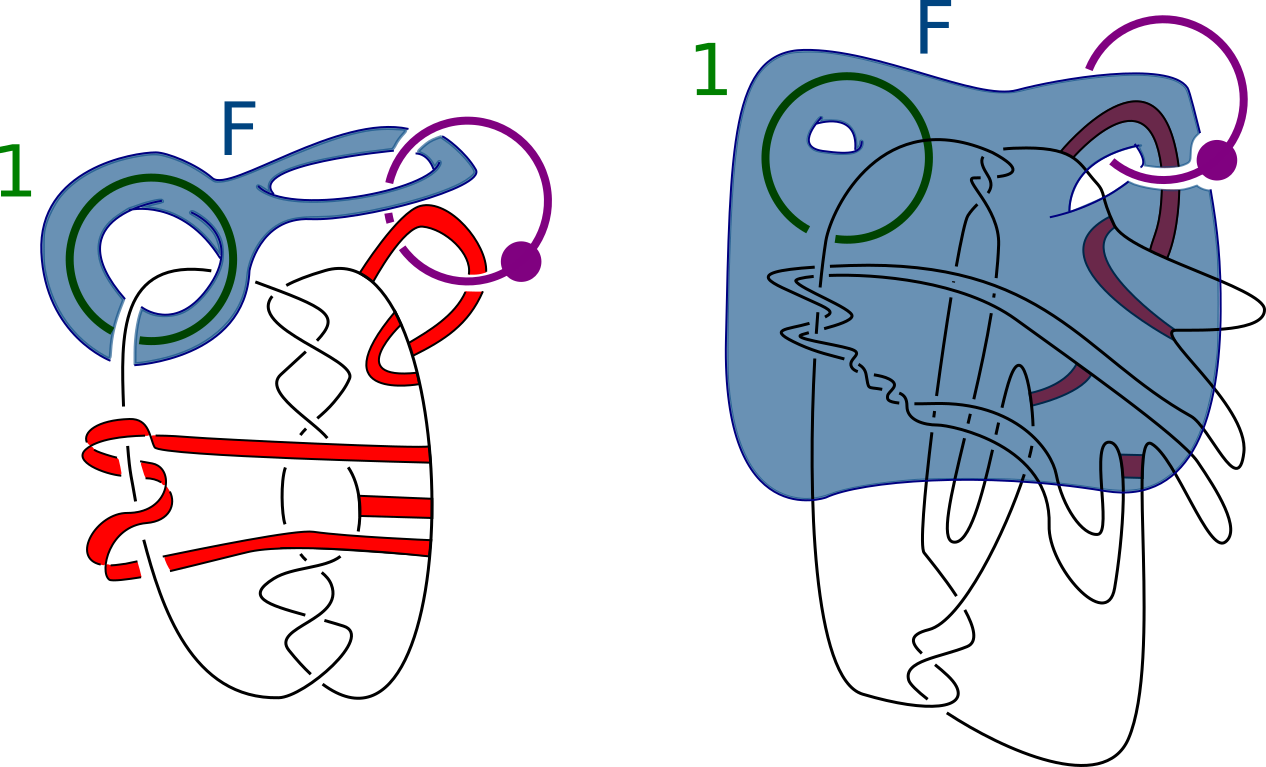}
\caption{Left: A banded unlink for a torus in $\CP^2\#(S^1\times S^3)$. The surface $F$ induces a genus-$2$ Heegaard splitting $H\cup_F H'$ on $M_1=S^1\times S^2$. Right: We isotope the banded unlink into bridge position with respect to the Heegaard splitting $H\cup_F H'$, as in Def.~\ref{def:bridge}.}\label{fig:bridgeposition}
\end{figure}

\begin{lemma}{\cite[Lemma 4.4]{meier2018bridge}}\label{lemma:bridge}
Let $(\K,L,v)$ be a banded unlink diagram. Let $H\cup_F H'$ be a Heegaard splitting of $M_1$. After isotopy of $L\cup v$ in $E(\K)$, we may assume $(\K,L,v)$ is in bridge position with respect to $H\cup_F H'$.
\end{lemma}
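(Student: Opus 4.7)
The plan is to adapt the classical argument that any link in a Heegaard-split $3$-manifold admits a bridge splitting, and then do additional work to push the bands into $H$ and arrange the shadow/framing data. All isotopies must take place in $E(\K) \subset M_{3/2}$; since $L_2$ lies on a core of $H$ and the meridians of $L_1$ lie on a core of $H'$, the admissible isotopies are those that avoid the specified core curves in each handlebody.

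First I would put $L$ in bridge position with respect to $H \cup_F H'$: after a generic perturbation making $L \pitchfork F$, the usual compressing-disk/finger-move dichotomy inside each handlebody lets us reduce each of $L \cap H$ and $L \cap H'$ to a trivial tangle, possibly at the cost of increasing the bridge number by stabilizations. Next I would push $v$ into $H$. After a further perturbation, $v \pitchfork F$ is a $1$-manifold of arcs (ending on $L \cap F$) and circles; circles are removed by innermost-disk arguments in $F$, and each sub-band of $v$ lying in $H'$ is pushed across $F$ into $H$. Finger moves on $L$, further increasing the bridge number, resolve any conflicts where endpoints of such sub-bands sit on $L \cap H'$.

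Finally I would repair the shadow and framing. Project the cores of $v \subset H$ to $F$ to obtain a candidate shadow $\eta$, and project $L \cap H$ to $F$ to obtain a candidate $L'$. Duality ($\mathring{\eta} \cap L' = \emptyset$ and simple connectedness of each component of $\eta \cup L'$) can be arranged by endpoint slides of $\eta$ along $L'$, outermost-arc moves pushing interior intersections of $\eta$ with $L'$ off of $F$, and further bridge stabilizations of $L$ to cut residual cycles in $\eta \cup L'$. Any integer discrepancy between the $v$-framing on $\eta$ and the surface framing induced by $F$ is then absorbed by introducing compensating half-twist kinks in each band near $F$.

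The main obstacle I expect is keeping all three clauses of Definition~\ref{def:bridge} mutually compatible: pushing bands into $H$ may require finger-moving $L$, and repairing duality on $F$ may create new band–surface intersections. The cleanest setup is probably a joint complexity such as the bridge number of $L$ plus $|v \cap F|$ plus the number of non-simply-connected components of $\eta \cup L'$, together with a verification that at each step some coordinate strictly decreases, so that a minimizing representative automatically satisfies Definition~\ref{def:bridge}.
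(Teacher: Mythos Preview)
The paper does not give its own proof of this lemma: it is quoted verbatim from Meier and Zupan \cite[Lemma~4.4]{meier2018bridge}, and the only commentary in the present paper is the parenthetical remark just before the statement that ``the proof is similar to the fact that every knot in a Heegaard-split $3$-manifold can be isotoped into bridge position.'' So there is nothing here to compare against beyond that one-line hint.

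Your outline is exactly in the spirit of that hint and of the Meier--Zupan argument: first bridge-split $L$ with respect to $H\cup_F H'$ by the classical finger-move/compressing-disk mechanism, then push the bands into $H$ while stabilizing $L$ as needed, and finally clean up the shadow data on $F$. Two small points are worth tightening. First, your framing fix (``half-twist kinks in each band near $F$'') is not quite the right move: a literal half-twist changes the band itself, not its relation to the surface framing of $F$; what actually works is a further bridge stabilization of $L$ near an endpoint of the band, which lets you reroute the core so that its $F$-framing matches the band framing. Second, in the duality step you should be explicit that all of the isotopies avoid the core curves carrying $L_2$ in $H$ and the surgery solid tori for $L_1$ in $H'$, so that everything genuinely stays in $E(\K)$; you flag this at the start but it should be checked at each stage, especially during innermost-disk swaps in $F$. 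With those adjustments your sketch is a correct rendering of the cited proof.
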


\begin{lemma}{\cite[Lemma 4.5]{meier2018bridge}}\label{lemma44}
Let $\tri=(X_1,X_2,X_3)$ be a trisection of a $4$-manifold $X^4$. Fix a self-indexing Morse function $h:X^4\to I$, so that $X_1$ contains all the index-$0$ and -$1$ critical points of $h$, $X_2$ contains all the index-$2$ critical points, and $X_3$ contains all the index-$3$ and -$4$ critical points (see~\cite[Lemma 14]{gay2016trisecting}).

Let $\mathcal{K}$ be the Kirby diagram of $X^4$ induced by $h$, so $M_1\cong\boundary X_1$ comes with a Heegaard splitting $M_{3/2}=H\cup_F H'$, where $H=X_1\cap X_2$, $H'=X_1\cap X_3$, and $F=X_1\cap X_2\cap X_3$. (We will say that $\K$ is a Kirby diagram of $X^4$ induced by $\tri$.)

Let $(\K,L,v)$ be a banded unlink diagram describing a surface $S\subset X^4$. If $(\K,L,v)$ is in bridge position with respect to $H\cup_F H'$, then the Heegaard splitting $H\cup_F H'$ induces a trisection $\tri$ on $X^4$ so that $S$ is naturally in bridge position with respect to $\tri$. 
Similarly, if $S'$ is a surface in bridge position with respect $\tri$, then we may obtain a banded unlink $(\K,L',v')$ for $S'$ which is in bridge position with respect to the Heegaard splitting $M_{3/2}=H\cup_F H'$.
\end{lemma}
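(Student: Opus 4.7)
The plan is to prove both directions of the equivalence between bridge position of a banded unlink diagram and bridge position of the associated surface by working in a concrete geometric model where the trisection pieces $X_1, X_2, X_3$ are aligned with sublevel sets of the Morse function $h$. Specifically, we may arrange that $X_1$ is a thickening of $h^{-1}([0,3/2])$ containing the $0$- and $1$-handles, $X_3$ is dually a thickening of $h^{-1}([5/2,4])$ containing the $3$- and $4$-handles, and $X_2$ is the middle cobordism containing the $2$-handles. Under this alignment, the central surface $F = X_1\cap X_2\cap X_3$ sits inside $M_{3/2} = \partial X_1$ as the Heegaard surface between $H = X_1\cap X_2$ (containing the $2$-handle attaching circles $L_2$ in its core) and $H' = X_1\cap X_3$ (dual to $L_1$).

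For the forward direction, I would realize $S = S(\K,L,v)$ concretely in horizontal-vertical position: cap $L$ by minimal disks in $M_{1/2}$, extend $L$ vertically upward to $M_{3/2}$, place the bands $v$ in a thin collar of $F$ inside $H$ (permitted because $v\subset H$), extend $L_v$ upward to $M_{5/2}$, and cap by maximal disks. Then $S\cap H = L\cap H$ and $S\cap H' = L\cap H'$ (together with small pieces of the bands near $F$), both of which are trivial tangles by hypothesis, so the tangle conditions of bridge trisection position hold on the pairwise intersections. To check that each $S\cap X_i$ is a trivial disk system: $S\cap X_1$ is a union of vertical cylinders on $L$ capped below by the minima disks, hence boundary-parallel in $X_1$; $S\cap X_3$ is analogous above; and $S\cap X_2$ is assembled from the bands $v$ together with bridge disks for $L\cap H$ in $H$ and capping disks for $L_v$ pushed slightly into $X_2$. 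The duality condition on the shadow $\eta$ of $v$ relative to a shadow $L'$ of $L\cap H$ is exactly what guarantees that these pieces assemble into a disjoint union of disks (rather than annuli or higher-genus pieces), since each component of $\eta\cup L'$ being simply connected corresponds precisely to a disk component of $S\cap X_2$.

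For the backward direction, start with $S'$ in bridge position with respect to $\tri$. Because each $S'\cap X_i$ is a disjoint union of boundary-parallel disks, I would isotope the disks in $X_1$ to descend to a family of minima in $M_{1/2}$, the disks in $X_3$ to ascend to maxima in $M_{5/2}$, and the cobordism $S'\cap X_2$ into a standard form whose saddles lie in a thin neighborhood of $F\subset M_{3/2}$. This flattening works because the disk system $S'\cap X_2$ has boundary in $H \cup (X_2\cap X_3)$ and each disk admits a core projecting to $F$, which becomes the core of a band. Setting $L' = S'\cap M_{3/2-\epsilon}$ and taking $v'$ to be the projected bands gives a banded unlink diagram describing $S'$. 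Bridge position for the diagram follows from the bridge trisection hypothesis: $L'\cap H = S'\cap X_1\cap X_2$ and $L'\cap H' = S'\cap X_1\cap X_3$ are trivial tangles by assumption, and the shadows of $v'$ inherited from the band cores are dual to shadows of $L'\cap H$ precisely because the disks of $S'\cap X_2$ interpolate between $L'\cap H$ and $L'_{v'}\cap (X_2\cap X_3)$.

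The main obstacle will be the shadow/duality condition: verifying that band shadows produced from a bridge trisected surface can always be arranged dual to link shadows, and conversely that this duality alone suffices to upgrade the pieces produced by the forward construction into honest boundary-parallel disks in $X_2$. Carrying this out requires careful tracking of framings --- the band framing must agree with the surface framing $\partial H$ induces on the shadow --- and interpreting the disk system $S'\cap X_2$ as recording a concordance between trivial tangles on the two sides of the middle region. Once this duality is handled, the remaining checks (that vertical cylinders give boundary-parallel disks in $X_1$ and $X_3$, that trivial subtangles of $L$ become trivial tangles in the handlebodies, and that the Kirby data does not obstruct the matching) are direct unpackings of the definitions.
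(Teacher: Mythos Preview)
The paper does not supply a proof of this lemma; it is quoted directly from Meier--Zupan~\cite[Lemma~4.5]{meier2018bridge} and used as a black box in the proof of Theorem~\ref{bridgethm}. So there is no in-paper argument to compare against.

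Your outline is along the lines of how Meier and Zupan argue, and the key observation---that the duality of the band shadow $\eta$ with a shadow $L'$ of $L\cap H$ is precisely the condition forcing each component of $S\cap X_2$ to be a disk rather than an annulus or higher-genus piece---is the heart of the matter. A couple of points deserve tightening. First, in the forward direction you write that the bands sit ``in a thin collar of $F$ inside $H$'' and then that $S\cap H = L\cap H$ ``together with small pieces of the bands near $F$''; this is muddled. The bands must be pushed slightly \emph{into} $X_2$ (off of $\partial X_1$) so that $S\cap H$ is genuinely the $1$-dimensional tangle $L\cap H$, and the band surgery is recorded entirely in $S\cap X_2$. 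Second, you should verify that $S\cap(X_2\cap X_3)$ is a trivial tangle: this is $L_v$ restricted to (a copy of) $H$, and its triviality follows because the bridge disks for $L\cap H$ together with the bands (using the framing hypothesis) give bridge disks for $L_v\cap H$. Finally, in the backward direction, the assertion that each disk of $S'\cap X_2$ ``admits a core projecting to $F$'' which becomes a band core needs justification: one must arrange the disks as a sequence of bridge disks in $H$, then bands, then bridge disks in $X_2\cap X_3$, which is done by putting a Morse function on the disk system relative to its boundary. These are the details Meier--Zupan handle; your sketch has the right architecture.
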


Now we are ready to prove Theorem~\ref{bridgethm}, mirroring the proof of Theorem~\ref{s4theorem} in~\cite{meier2017bridge}.

\begin{proof}
Let $S$ and $S'$ be isotopic surfaces in $X^4$ which are both in bridge position with respect to a trisection $\tri=(X_1,X_2,X_3)$. Let $\K$ be a Kirby diagram for $X^4$ as in Lemma~\ref{lemma44}. Again let $H=X_1\cap X_2$ and $H'=X_1\cap X_3$ so that $H\cup_F H'$ is a Heegaard splitting of $M_{3/2}$.


By Lemma~\ref{lemma44}, $\tri$ induces a banded unlink diagram $\D:=(\K,L,v)$ for $S$, where $\D$ is in bridge position with respect to the Heegaard splitting $H\cup_F H'$. Similarly, $\tri$ induces a banded unlink diagram $\D'=(\K,L',v')$ for $S'$ which is in bridge position with respect to $H\cup_F H'$.

By Theorem~\ref{thm:generalswenton}, $\D$ can be taken to be isotopic to $\D'$ after performing cap/cup, band slide, band swim, $2$-handle-band slide, dotted circle slide, and $2$-handle-band swim moves. Meier and Zupan show explicitly how to achieve the cap/cup, band slide, and band swim moves by perturbing and deperturbing $S$ with respect to $\tri$ in~\cite[Theorem 1.6]{meier2017bridge}.

Suppose $\D'$ is obtained from $\D$ by a single $2$-handle-band slide or swim. Let $z$ be the framed arc from a band $\nu\in v_1$ to a $2$-handle attaching circle $C\subset L_2\subset M_{3/2}$ along which the slide or swim takes place. Take $z$ disjoint from the cores of $H$ and $H'$, then isotope $z$ vertically into $H$, possibly stabilizing the bridge position of $(\K,L,v)$. This induces perturbation of $S$.
%

Thus, performing the $2$-handle-band slide or swim on $\D$ induces isotopy on $S$ which fixes $S\cap(X_1\cap X_2)$ and $S\cap (X_1\cap X_3)$ and isotopes $S\cap (X_2\cap X_3)$ within $X_2\cap X_3$. This can therefore be taken as a $\tri$-regular isotopy.
 (Diagramatically, the $2$-handle-band slide or swim induces disk-slides on a bridge trisection diagram of $S$. We do not consider the diagrammatic point of view on bridge trisections in this paper; see~\cite{meier2018bridge}.)
 
\begin{claim}
Aafter a sequence of perturbations and deperturbations of $S$, we may take $(L,v)$ to be isotopic to $(L',v')$ in $E(\K)$ up to dotted circle slides.
\end{claim}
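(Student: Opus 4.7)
The plan is to induct on the number of non-trivial band moves needed to convert $\D$ into $\D'$. By Theorem~\ref{thm:generalswenton}, such a finite sequence exists; partition its members into Type~A moves (cap, cup, band slide, band swim, $2$-handle-band slide, and $2$-handle-band swim) and Type~B moves (isotopy in $E(\K)$ and dotted circle slides). The claim asserts that after performing suitable perturbations and deperturbations of $S$, the remaining band moves separating the resulting banded unlink diagram from $\D'$ are all of Type~B.

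If the sequence contains no Type~A moves then $(L,v)$ and $(L',v')$ already agree up to isotopy in $E(\K)$ and dotted circle slides, and there is nothing to do. For the inductive step let $A$ be the first Type~A move in the sequence, taking some intermediate diagram $\D_i$ to $\D_{i+1}$. The earlier part of this proof handles precisely this situation: cap/cup, band slide, and band swim moves are realized by perturbations or deperturbations via~\cite[Theorem~1.6]{meier2017bridge}, while $2$-handle-band slides and swims are realized by the construction in the paragraph immediately preceding the claim. Performing the corresponding perturbation or deperturbation on $S$ yields a new surface $S_1$ in bridge position with respect to $\tri$, whose banded unlink diagram $(\K,L_1,v_1)$ differs from $\D_{i+1}$ only by Type~B moves. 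The band move sequence from $(\K,L_1,v_1)$ to $\D'$ then contains strictly fewer Type~A moves than the original, so the induction closes.

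The main technical point is the bookkeeping: when a perturbation enlarges the banded unlink diagram by a new unknot component of $L$ and a new band of $v$, one must verify that the enlarged diagram remains in bridge position with respect to the Heegaard splitting $H \cup_F H'$, and that its combined effect on $(L,v)$ really is the prescribed Type~A move composed with at most Type~B moves. Bridge position is preserved because perturbation is a local operation supported near a chosen disk $\Delta \subset X_i$; by Lemma~\ref{lemma:bridge}, after a $\tri$-regular isotopy we may push the newly created arcs into the appropriate handlebody of the Heegaard splitting and arrange their shadows to be dual to those of the pre-existing bridge arcs. Once this local verification is in place for each move type, concatenating the perturbations and deperturbations produced by the induction yields the surface whose diagram agrees with $(L',v')$ up to isotopy in $E(\K)$ and dotted circle slides, which is exactly the conclusion of the claim.
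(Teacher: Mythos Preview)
Your induction is a natural way to organize the argument, but the inductive step has a genuine gap. You want to apply the perturbation/deperturbation realizing the Type~A move $A : \D_i \to \D_{i+1}$ to the surface $S$, and conclude that the resulting diagram is Type~B--equivalent to $\D_{i+1}$. The trouble is that $S$ has diagram $\D = \D_0$, not $\D_i$; they differ by dotted circle slides. The perturbation constructions you cite from Meier--Zupan and from the paragraph preceding the claim are local operations specified relative to the diagram on which the move acts (for instance, the arc $z$ to a $2$-handle attaching circle is drawn in $\D_i$). To apply them to $S$ you must transport $A$ across the intervening Type~B moves to a Type~A move $A'$ on $\D$, and then argue that $A'(\D)$ is Type~B--equivalent to $A(\D_i) = \D_{i+1}$. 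This is precisely the assertion that Type~A moves commute past dotted circle slides, and it is not automatic; your bookkeeping paragraph checks that bridge position is preserved after perturbing, which is a different (and easier) issue.

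The paper handles this with one observation you are missing: two banded unlinks in $E(\K)$ that differ by dotted circle slides become \emph{isotopic} once one passes to $S(\K) := (S^3 \setminus \nu(L_2))_0(L_1)$, the manifold obtained by deleting the $2$-handle circles and actually surgering the dotted circles. In $S(\K)$ the dotted circle slides are honest ambient isotopies and therefore commute past the local Type~A moves for free. Concretely, one may simply delete every dotted circle slide from the sequence $m_1 \cdots m_n$ to obtain a sequence $m'_1 \cdots m'_{n'}$ of Type~A moves carrying $(L,v)$ to $(L',v')$ up to isotopy in $S(\K)$, then append dotted circle slides $s_1 \cdots s_k$ at the end to restore agreement in $E(\K)$. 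With this reordering in hand the claim is immediate, since the $m'_j$ are now applied consecutively starting from $\D$ and are each realized by perturbation/deperturbation as already established. Your induction becomes valid once this $S(\K)$ observation is inserted, but as written the sentence ``whose banded unlink diagram $(\K,L_1,v_1)$ differs from $\D_{i+1}$ only by Type~B moves'' is asserted without justification.
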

\begin{proof}
The claim almost follows from Theorem \ref{thm:generalswenton}, except that we did not show we could take the dotted circle slides to happen after the other band moves (except for isotopy in $E(\K)$, which we did implicitly show could be taken to happen at the end of the equivalence from $(\K,L,v)$ to $(\K,L',v')$). Two diagrams that differ by dotted slides agree up to isotopy in $S(\K):=(S^3\setminus\nu(L_2))_0(L_1)$; i.e. the $3$-manifold with boundary obtained from $\K$ by deleting the $2$-handle attaching circles and surgering the dotted circles.  Therefore, if there is a sequence of (non-isotopy) band moves $m_1m_2\cdots m_n$ which takes $(\K,L,v)$ to $(\K,L',v')$ up to isotopy in $E(\K)$, then we may delete all of the dotted circle slides to find a sequence of (non-isotopy and non-dotted circle slide) band moves $m'_1m'_2\cdots m'_{n'}$ from $(\K,L,v)$ to $(\K,L',v')$ up to isotopy in $S(\K)$. Then there exist dotted circle slides $s_1,\ldots, s_k$ so that $m'_1m'_2\cdots m'_{n'}s_1\cdots s_k$ takes $(\K,L,v)$ to $(\K,L',v')$ up to isotopy in $E(\K)$.
\end{proof}

Thus, after the above perturbations and deperturbations of $S$, we may take $(L,v)$ to be isotopic to $(L',v')$ in $S(\K)$. 
 The isotopy may not respect the bridge splitting with respect to $H\cup_F H'$. By the same argument as~\cite[Theorem 1.6]{meier2017bridge} (this cites~\cite{zupan2013bridge}, which is stated for bridge splittings of a tangle in a punctured $3$-sphere but works just as well for a punctured handlebody), we may perturb and deperturb $\D_1$ (and isotope a neighborhood of $F$, taking $F$ to $F$ setwise) so that $L\cap F= L'\cap F$, $(L\cup v)\cap H$ is isotopic to $(L'\cup v')\cap H)$ rel boundary in $H\cap E(K)$, and $L\cap H'$ is isotopic to $L\cap H'$ rel boundary in $H'$. Thus, after the listed sequence of perturbations and deperturbations, we find that $S$ is $\tri$-regular isotopic to $S'$.
\end{proof}
\label{endsec4}

\section{Examples in $\CP^2$}\label{sec:cp2}

In this section, we construct isotopies of surfaces embedded in $\CP^2$. In particular, we study unit surfaces.

\begin{definition}
Let $\Sigma$ be a surface in $\CP^2$. If $\Sigma$ intersects the standard $\CP^1\subset \CP^2$ in exactly one point, then we say that $\Sigma$ is a {\emph{unit surface}}.
\end{definition}

One motivation for studying unit surfaces is to understand the Gluck twist operation~\cite{gluck1961embedding}. This is a surgery operation on a $2$-sphere $\Sigma \subset X^4$ as long as $\Sigma$ has trivial normal bundle. In particular, the Gluck twist on $S^4$ about any embedded $2$-sphere yields a homotopy $4$-sphere. The homotopy $4$-sphere resulting from a Gluck twist along $\Sigma\subset S^4$ is known to be diffeomorphic to $S^4$ for many families of $\Sigma$, including ribbon knots~\cite{gluck1961embedding, yanagawa1969ribbon}, spun knots~\cite{gluck1961embedding}, twist-spun knots~\cite{gordon1976foursphere}, band-sums of ribbon and twist-spun knots~\cite{habiro2000gluck}, and knots $0$-concordant to any of the others in this list~\cite{melvin1977thesis}. We will define each of these families later in this section.

If $\Sigma$ is a sphere in $S^4$, then we can take the connected sum of the pairs $(S^4,\Sigma)$ and $(\CP^2,\CP^1)$ to obtain the 4-manifold $\CP^2 = S^4 \#\CP^2 $ and an embedded surface which we denote by $\Sigma\# \CP^1$.  Melvin~\cite{melvin1977thesis} showed that the Gluck twist along $\Sigma \subset S^4$ is diffeomorphic to $S^4$ if and only if there is a pairwise diffeomorphism from $(\CP^2, \Sigma \#\CP^1)$ to $(\CP^2, \CP^1)$. This in part motivates the following questions of Melvin and Gabai.

\begin{question}\label{gluckquestion}
Let $F\subset\CP^2$ be a sphere in the generating homology class $[\CP^1]\in H_2(\CP^2;\Z)$ with $F\cap\CP^1=\left\{\pt\right\}$.
\begin{align*}
\text{(i)}&\text{ Is $(\CP^2,F)$ diffeomorphic as a pair to $(\CP^2,\CP^1)$?~\cite{melvin1977thesis} }\\
\text{(ii)}&\text{ Is $F$ isotopic to the standard $\CP^1$ in $\CP^2$?~\cite[Question 10.17.i]{gabai2017lightbulb} }
\end{align*}

Note that by the previously stated work of~\cite{melvin1977thesis}, Question \ref{gluckquestion}(i) is equivalent to Kirby problem 4.23~\cite{kirbyproblems} (``Is the Gluck twist of $S^4$ about an arbitrary $2$-sphere diffeomorphic to $S^4$?'').
\end{question}

In this section, we will show that many of these unit surfaces (including all the examples listed above) are in fact isotopic to the standard $\CP^1$ using the moves of Theorem~\ref{thm:generalswenton}.

First, we give an alternate definition of $\Sigma\#\CP^1$.

\begin{definition}\label{def:unitsurface}
Let $\Sigma$ be a surface in $S^4$. Let $x$ be a point in $S^4$ far from $\Sigma$, so that $\Sigma\subset S^4\setminus\nu(x)\cong B^4$. We can view $\Sigma$ as living in $\CP^2$, inside the $4$-ball $\CP^2\setminus\nu(\CP^1)$. Let $h$ be the radial Morse function on $B^4$ and isotope $\Sigma$ so that $h\vert_\Sigma$ has a unique global maximum at $y\in\Sigma$. Let $\gamma$ be an arc from $y$ extending radially outward in $B^4$ until reaching $\CP^2$.

Let $U_\Sigma$ be a copy of $\Sigma$ tubed to $\CP^1$ along an arc $\gamma$ so that $\gamma\cap(\CP^2\setminus\nu(\CP^1))$ and $\gamma\cap\nu(\CP^1)$ are each single intervals. We call $U_\Sigma$ the {\emph{unit surface associated to $\Sigma$.}} We write $U_\Sigma=\Sigma\#\CP^1$.

Equivalently, $(S^4,\Sigma)\#(\CP^2,\CP^1)=(\CP^2,U_\Sigma)$.
\end{definition}
\begin{remark}
In Definition~\ref{def:unitsurface}, the identification of $S^4\setminus\nu(x)$ with $\CP^2\setminus\nu(\CP^1)$ does not affect the embedding $\Sigma\subset\CP^2$ up to ambient isotopy. The framing of $\gamma$ does not matter so long as the orientation on $U_\Sigma$ agrees with the orientations on $\Sigma$ and $\CP^1$.
 \end{remark}
 
\begin{remark}
We may obtain a banded unlink diagram for $U_\Sigma$ as follows.  Let $(\K_0,L,v)$ be a banded unlink diagram for $\Sigma\subset S^4$. Add a $1$-framed $2$-handle to $\K_0$ as a small meridian of $L$, far away from $v$, to obtain a Kirby diagram $\K_1$ for $\CP^2$. Then $(\K_1,L,v)$ is a banded unlink diagram for $U_\Sigma\subset\CP^2$.
\end{remark}

\subsection{Ribbon and $0$-concordant surfaces}\label{subsec:ribbon} We now proceed to define ribbon surfaces, as well as a diagrammatic framework for describing these surfaces and their isotopies, called \emph{chord diagrams}.  We will use chord diagrams to show that all ribbon unit surfaces are unknotted in $\CP^2$.

\begin{definition}
Let $S_1, S_2$ be surfaces in $S^4$, and let $\pi_I:S^4 \times I \rightarrow I$ be the projection to the unit interval. We say that $S_1$ is \emph{ribbon-concordant} to $S_2$ if there exists a cobordism $f:\Sigma\times I\to  S^4\times I$ so that $f(\Sigma\times 0)=S_1\times 0$ and  $f(\Sigma\times 1)=S_2\times 1$ and so that $\pi_I \circ f$ has finitely many critical points with distinct critical values, all of which are of index-$0$ or -$1$.  We say that a surface $S$ is {\emph{ribbon}} if the unknotted sphere is ribbon-concordant to $S$.
\end{definition}

Kawauchi~\cite{kawauchi2015chord} gives an equivalent definition of ribbonness without the cobordism perspective, by using semi-unknotted punctured handlebodies.

\begin{definition}[e.g.~\cite{kawauchi2015chord}]
A genus-$g$ surface $R\subset X^4$ is \emph{ribbon} if $R$ bounds a a punctured 3-dimensional handlebody $V$ embedded in $X^4$ so that $\partial V = R \cup O$, where $O$ is an trivial unlink of unknotted spheres.
\end{definition}

From this definition comes  a diagrammatic description of ribbon surfaces. We recall the definition of a chord diagram of an oriented ribbon surface knot $R \subset S^4$.

\begin{definition}[\cite{kawauchi2015chord}]\label{chorddef}
A \emph{chord graph} for a ribbon-surface in $S^4$ consists of an oriented unlink $o$ of circles in $S^3$ and arcs $\alpha$ in $S^3$ with endpoints on $o$ and interiors disjoint from $o$. This graph indicates the same ribbon-surface as the banded unlink $(\K_0,o,v)$, where $v$ consists of pairs of dual bands attached along the $\alpha$ curves, as in Fig.~\ref{fig:ribbondiagram}. (Twisting this pair does not affect the resulting surface, as long as they describe an orientable surface.)  A \emph{chord diagram} is a planar diagram of a chord graph. (See~\cite{kawauchi2015chord} for details.)
\end{definition}

\begin{figure}
\includegraphics[width=.85\textwidth]{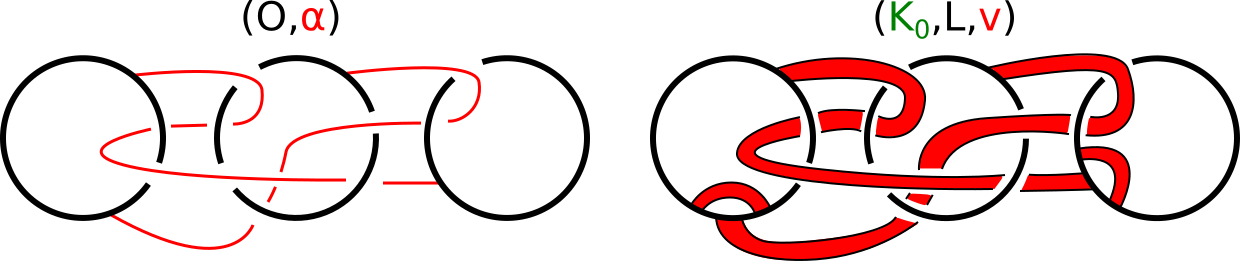}
\caption{Left: a chord diagram for a ribbon surface $R$ in $S^4$. Each unlink component is oriented counterclockwise. Right: a banded unlink diagram for the same ribbon surface $R$ in $S^4$.}\label{fig:ribbondiagram}
\end{figure}

Kawauchi~\cite{kawauchi2015chord} gives a list of diagrammatic moves on chord diagrams that represent isotopies of a ribbon surface $S^4$. (This list is incomplete, see e.g.~\cite{kawauchi2017supplement}.) These moves are illustrated in Figure~\ref{fig:m012moves}.


\begin{figure}
\includegraphics[width=\textwidth]{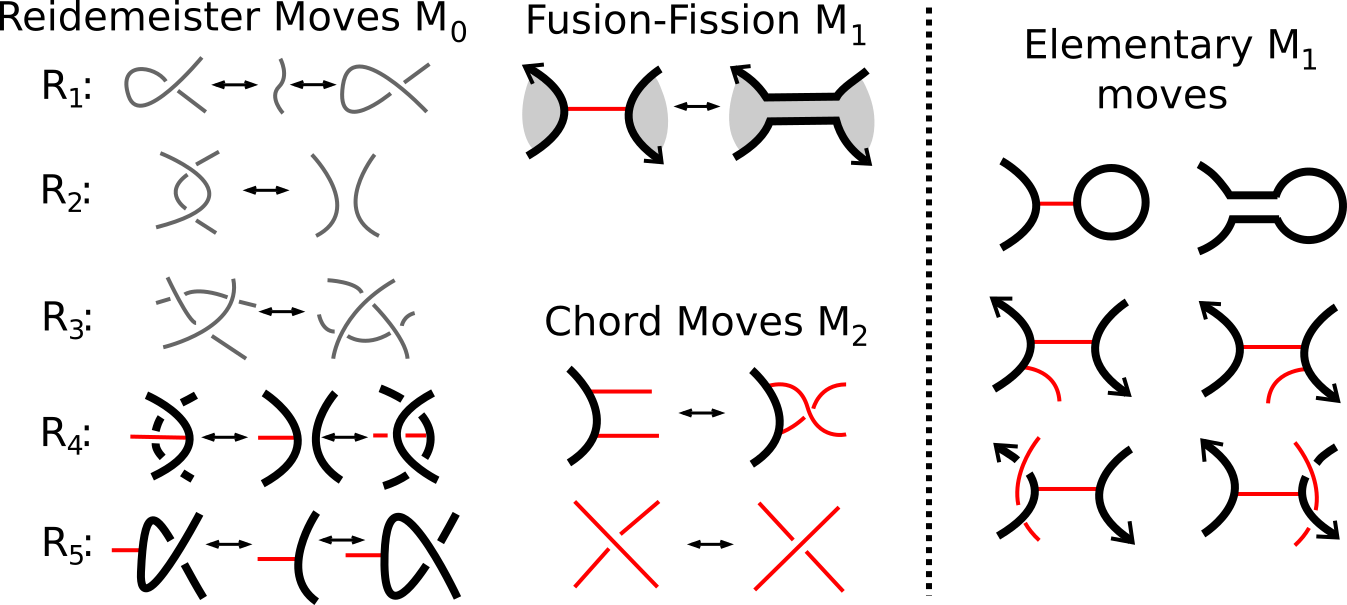}
\caption{Kawauchi's $M_0,M_1,M_2$ moves for chord diagrams in $S^4$. In moves $R_4, R_5, M_1$ and $M_2$ the red (narrow) curves represent $\alpha$ arcs, while black (bold) curves represent components of $o$.  The (gray) curves of moves $R_1, R_2,$ and $R_3$ represent arcs from either $\alpha$ or $o$.}\label{fig:m012moves}
\end{figure}


\begin{remark}
We do not specify the framings on the chords $\alpha$ in a chord diagram $(O,\alpha)$ for $R$. In fact, if two framings of $\alpha$ give descriptions of orientable surfaces, then the surfaces are isotopic. Similarly, we may add whole twists to the framing of $v$ in the associated banded unlink $(\K_0,L,v)$ for $R$ without changing the isotopy type of the described surface.
\end{remark}

\begin{exercise}
Let $(O,\alpha)$ be a chord diagram and $(\K_0,L,v)$ the associated banded unlink.
\begin{itemize}
\item An $M_0$ move on $(O,\alpha)$ induces isotopy on $(\K_0,L,v)$.
\item Each elementary $M_1$ move on $(O,\alpha)$ can be achieved by performing a sequence of fusion-fission $M_1$ moves and isotopies on $(O,\alpha)$. Conversely, every fusion-fission $M_1$ move can be achieved by a sequence of elementary $M_1$ moves and isotopies.
\item An elementary $M_1$ move on $(O,\alpha)$ induces isotopy and a sequence of band moves on $(\K_0,L,v)$.
\item An $M_2$ move on $(O,\alpha)$ induces isotopy and a sequence of band slides and swims on $(\K_0,L,v)$. (Very informally, the $M_2$ move most uses the property that $(O,\alpha)$ describes a ribbon surface. In a general banded unlink diagram, we cannot hope to pass bands through one another.)
\end{itemize}
\end{exercise}

When $R$ is ribbon, the moves of Theorem~\ref{thm:generalswenton} give a useful isotopy of $U_R$. We call this the $M_3$ move; see Figure~\ref{fig:m3move}. We slide a band over the $2$-handle, and then swim the $2$-handle through the dual band (or perform the same moves in the opposite order). This move allows us to add and remove linking between $L$ and $v$ in $E(\K_1)$.

\begin{figure}
\includegraphics[width=.9\textwidth]{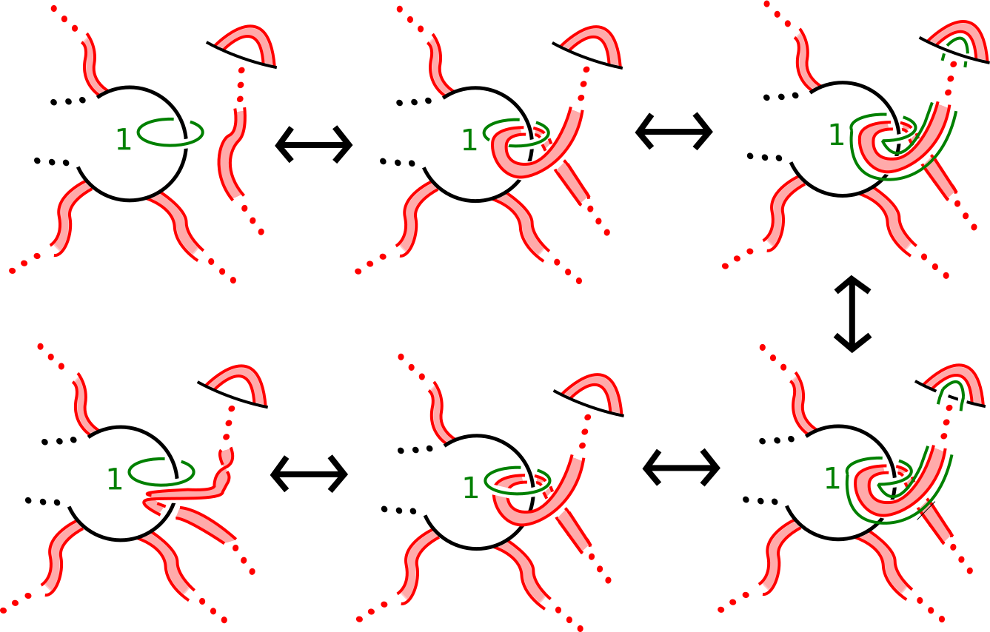}
\caption{The $M_3$ move consists of one $2$-handle-band slide and one $2$-handle-band swim. We slide a band over the $1$-framed $2$-handle, then swim the $2$-handle through the dual band (or perform the same moves in the opposite order). With this move, we can add or remove linking between $L$ and $v$ in $E(\K_1)$.}\label{fig:m3move}
\end{figure}

\begin{lemma}\label{ribbonlemma}
\label{lem:newmove}
Let $R\subset S^4$ be a ribbon surface of genus $g$. Then $U_R\subset\CP^2$ is isotopic to $\CP^1\# gT$, where $T$ is an unknotted torus in $B^4$.
\end{lemma}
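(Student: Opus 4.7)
Fix a chord diagram $(O,\alpha)$ for $R$, with $n=|O|$ components and $m=|\alpha|$ chords, so that $n-m=1-g$. The associated banded unlink $(\K_0,L,v)$ has $L=O$ and $v$ a collection of $2m$ dual chord-bands, and $(\K_1,L,v)$ is a banded unlink for $U_R$ obtained by adding the $+1$-framed unknot representing $\CP^1$. The plan is to transform $(\K_1,L,v)$ into the standard banded unlink for $\CP^1\#gT$ via a finite sequence of band moves, invoking Theorem~\ref{thm:generalswenton} to conclude that $U_R$ is isotopic to $\CP^1\#gT$.

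The key ingredient available in $\CP^2$ but not in $S^4$ is the $M_3$ move of Figure~\ref{fig:m3move}, whose net effect is to add or remove a unit of linking between a band in $v$ and a component of $L$. Since chord-bands come in dual pairs, I would apply $M_3$ to each member of such a pair in a coordinated way, preserving the ribbon structure. The first main step is to simplify the chord diagram to a standard form consisting of a single circle carrying $g$ loop-chords and no other circles or chords. To do this, I would use $M_3$ moves together with band moves realizing Kawauchi's $M_0,M_1,M_2$ (as recorded in Section~\ref{subsec:ribbon}) to unlink the chord arcs from each other and from the components of $L$, and then apply fusion-fission $M_1$ moves to contract a spanning tree of $n-1$ chords, merging all $n$ circles of $O$ into a single one and leaving only $g$ essential loop-chords.

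The second step is to recognize this standard banded unlink as that of $\CP^1\#gT$: the single circle, together with the $+1$-framed $2$-handle of $\K_1$, caps off to $\CP^1$ (the minimum disk of $\CP^1$ lies in $M_{1/2}$, and the core of the $+1$-framed $2$-handle provides the maximum disk), while each of the $g$ loop-chords together with its dual pair of bands contributes one trivially embedded torus summand $T$. A short explicit comparison then identifies the resulting surface with $\CP^1\#gT$ up to isotopy in $\CP^2$.

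The main obstacle, I expect, is the first step: showing that the $M_3$ move is powerful enough, in combination with Kawauchi's moves and general band moves, to bring any ribbon chord diagram to this normal form. I would approach this inductively on a complexity measure of the chord diagram (e.g.\ a count of chord-chord and chord-circle linking numbers), using $M_3$ as a diagrammatic unlinking operation analogous to a crossing change and taking advantage of the dual pairing of chord-bands to control the side effects of each $M_3$. Once the reduction to normal form is achieved, the concluding identification with $\CP^1\#gT$ is essentially by inspection.
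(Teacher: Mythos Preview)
Your proposal is correct and follows the paper's approach: apply $M_3$ to remove all linking of the bands $v$ with $L$, then use $M_0$ and $M_2$ to make the chord diagram planar, and finally perform fusion $M_1$ moves to reduce to a single circle carrying $g$ pairs of dual bands, which is the banded unlink for $\CP^1\# gT$. Your anticipated ``main obstacle'' is not one: since $M_2$ already permits arbitrary chord--chord and chord--circle crossing changes, once $M_3$ has cleared the bands from the spanning disks of $L$ the diagram can be flattened directly via $M_0$ and $M_2$ with no inductive complexity argument required---the paper's proof occupies about five lines.
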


\begin{proof}
Let $(\K_0,L,v)$ be a banded unlink diagram for $R$, as in Definition~\ref{chorddef}. Then $(\K_1,L,v)$ is a banded unlink diagram for $U_R$. Perform the $M_3$ move on $L\cup v$ in $\K_1$ finitely many times until $v$ does not link $L$. (That is, until the bands $v$ do not meet disks bounded by $L$ in $S^3\supset\K_1$.) We then do moves $M_0$ and $M_2$ finitely many times to trivialize $L$ and $v$ so that in the projection $\pi:E(K_1)\to\R^2$, $\pi(L\cup v)$ are embedded (recall we may choose the framing of $v$ to be the framing induced by the plane). Finally, we perform the fusion $M_1$ move finitely many times to get a banded unlink diagram with one circle and $g$ pairs of dual bands. This is a banded unlink diagram for $\CP^1$ trivially stabilized $g$ times, i.e. $\CP^1\# gT$. 
\end{proof}

Using the above argument, we actually prove a stronger fact about a more general class of knots. We recall a form of concordance introduced by Melvin~\cite{melvin1977thesis} for spheres, and extended to positive-genus surfaces by Sunukjian~\cite{sunukjian2013concordance}.

\begin{definition}[\cite{melvin1977thesis}]
Let $S_1$ and $S_2$ be genus-$g$ surfaces in $X^4$. We say that $S_1$ is $0$-concordant to $S_2$ if there exists a cobordism $f:\Sigma_g\times I \to  X^4\times I$ so that $f(\Sigma_g\times 0)=S_1\times 0$, $f(\Sigma_g\times 1)=S_2\times1$, $\pi_I\circ f$ has finitely many critical points mapped to distinct $t$'s, and away from critical values of $t$, $f^{-1}(X^4\times t)$ is a disjoint union of a genus-$g$ surface and some number of spheres.
\end{definition}

One can think of a $0$-concordance as a concordance consisting of $0$-handles, then $1$-handles each attached between distinct surface components, then $2$-handles which each create a new sphere component, and finally $3$-handles.

Sunukjian~\cite{sunukjian2013concordance} noted the following relation between $0$-concordance and ribbon-concordance. 

\begin{lemma}[\cite{sunukjian2013concordance}, Lemma 8.1]\label{sunukjianlemma}
Let $S_1$ and $S_2\subset X^4$ be genus-$g$ surfaces so that $S_1$ is $0$-concordant to $S_2$. Then there exists a genus-$g$ surface $S$ so that $S_1$ and $S_2$ are both ribbon-concordant to $S$.
\end{lemma}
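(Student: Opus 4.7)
The plan is to take the given $0$-concordance $C: \Sigma_g \times I \to X^4 \times I$ from $S_1$ to $S_2$, reorder its handles, and then construct $S$ as the level set of an intermediate time after attaching a few extra tubes to absorb stray sphere components.

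First, view $h := \pi_I \circ C$ as a Morse function on the $3$-manifold $\Sigma_g \times I$. Its critical points come in four types: index-$0$ (births of spheres), index-$1$ (merges), index-$2$ (dually, births of spheres on the way down), and index-$3$ (deaths of spheres). By a standard handle reordering argument (Cerf theory), I would rearrange these so that all index-$0$ critical points occur below all index-$1$, below all index-$2$, below all index-$3$. One must keep every regular level of the form (genus-$g$ surface) $\sqcup$ (finitely many spheres), but since the topology of $\Sigma_g \times I$ is fixed and the handle decomposition is merely being reparametrized in $I$, this is routine.

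Next, choose a regular value $t_0$ lying above all index-$0$ and index-$1$ critical values and below all index-$2$ and index-$3$ critical values. Let $S^* = C(\pi_I^{-1}(t_0))$; by hypothesis $S^* = S^*_0 \sqcup \Sigma_1 \sqcup \cdots \sqcup \Sigma_k$, where $S^*_0$ has genus $g$ and each $\Sigma_i$ is a sphere. Choose pairwise disjoint arcs $\gamma_1,\ldots,\gamma_k \subset X^4 \times \{t_0\}$ joining $\Sigma_i$ to $S^*_0$ (possible since $X^4$ is connected), and define $S$ by tubing each $\Sigma_i$ to $S^*_0$ along $\gamma_i$. Then $S$ is a connected genus-$g$ surface.

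Finally, I would exhibit the two ribbon concordances. The restriction $C|_{[0,t_0]}$ is a cobordism from $S_1$ to $S^*$ whose critical points are all of index $0$ or $1$; stacking on top the cobordism implementing the $k$ tubings (each a single index-$1$ critical point) produces a cobordism from $S_1$ to $S$ with only index-$0$ and index-$1$ critical points, i.e.\ a ribbon concordance. For the other side, the restriction $C|_{[t_0,1]}$ with time reversed is a cobordism from $S_2$ to $S^*$; under time reversal on a $3$-dimensional cobordism, index-$k$ critical points become index-$(3-k)$, so the original index-$2$ and index-$3$ critical points become index-$1$ and index-$0$, respectively. Stacking the same $k$ tubings on top yields a ribbon concordance from $S_2$ to $S$.

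The main potential obstacle is the handle reordering step, specifically ensuring that intermediate levels during the reordering remain of the form required by $0$-concordance. However, because index-$1$ handles can be pushed past index-$0$ handles and index-$3$ past index-$2$ handles by generic isotopy of the Morse function, and because the handle data is determined only up to isotopy (not requiring creation of birth-death singularities), this rearrangement can be carried out without leaving the class of $0$-concordances. The tubing arcs $\gamma_i$ require no further argument since they are simply chosen generically in the connected ambient $X^4 \times \{t_0\}$.
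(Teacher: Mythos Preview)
Your argument is correct and follows the same strategy as the paper's: reorder the critical points of the $0$-concordance by index and take $S$ from the level between the index-$1$ and index-$2$ critical values, so that the two halves give the desired ribbon concordances.

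Your extra tubing step is harmless but vacuous. After reordering, the lower piece $W_-$ carries only the $0$- and $1$-handles, so the boundary map $C_1\to C_0$ in its handle chain complex coincides with that of the full complex for $(\Sigma_g\times I,\Sigma_g)$; since $H_0(\Sigma_g\times I,\Sigma_g)=0$ this map is onto and $W_-$ is connected. Viewed from the other end, $W_-$ is built from the middle level $N$ by $2$- and $3$-handles only, so $H_1(W_-,N)=H_0(W_-,N)=0$, and the long exact sequence of the pair gives $H_0(N)\cong H_0(W_-)=\Z$. Hence $N$ is already connected and there are no stray spheres; the paper accordingly just sets $S=N$ with no tubing.
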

\begin{proof}
Let $f:\Sigma_g\times I\to X^4$ be a $0$-concordance from $S_1$ to $S_2$. Isotope $f$ so that critical points occur in order of index. Let $t\in I$ so that all index-$0$ and -$1$ critical points of $f$ are contained in $\Sigma_g\times[0,t)$, and all index-$2$ and -$3$ critical points of $f$ are contained in $\Sigma_g\times(t,1]$. Let $S=f(\Sigma_g\times t)\subset X^4\times t=X^4$, where $X^4\times t$ is identified with $X^4$ via projection.

Let $g_1,g_2:\Sigma_g\times I\to X^4\times I$ be given by $g_1(x,s)=f(x,s/t)$ and $g_2(x,s)=f(x,(1-s)/(1-t))$. Then $g_i$ is a ribbon-concordance from $S_i$ to $S$.
\end{proof}

\begin{theorem}\label{thm:0concordant}

Let $S,S'\subset S^4$ be genus-$g$ surfaces so that $S$ is $0$-concordant to $S'$. Then $S\# \CP^1$ is isotopic to $S'\#\CP^1$ in $\CP^2$. In particular, if $S$ is $0$-concordant to the unknot, then $S\#\CP^1$ is isotopic to $\CP^1\# gT$.
\end{theorem}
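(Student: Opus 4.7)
The plan is to reduce this theorem to the key assertion that \emph{if $A\sim_{rib} B$, then $U_A$ is isotopic to $U_B$ in $\CP^2$.} Granting this, an application of Lemma~\ref{sunukjianlemma} to the $0$-concordance between $S$ and $S'$ produces a common surface $R$ with $S\sim_{rib} R$ and $S'\sim_{rib} R$, whence $U_S$, $U_R$, and $U_{S'}$ are pairwise isotopic. The ``in particular'' statement then follows immediately: the standard unknotted genus-$g$ surface in $S^4$ is ribbon, so Lemma~\ref{ribbonlemma} gives that $U_{\text{unknot}}$ is isotopic to $\CP^1\# gT$; combining with the key assertion applied to $S\sim_{0}(\text{unknot})$ yields that $U_S$ is isotopic to $\CP^1\# gT$.

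To prove the key assertion, I would use the diagrammatic framework set up by Theorem~\ref{thm:generalswenton}. A ribbon concordance from $A$ to $B$ introduces $n$ unknotted $2$-spheres followed by $n$ merging $1$-handles. Using a Morse-theoretic argument analogous to the proof of Lemma~\ref{generictoband}, I would choose a banded unlink diagram $(\K_0,L,v)$ for $A$ and extend it to a banded unlink diagram $(\K_0,L\cup L'',v\cup v'')$ for $B$, in which $L''$ is an unlink of $n$ unknotted circles (disjoint from $L\cup v$ in $S^3$) representing the new spheres, and $v''$ is a collection of $n$ bands (with endpoints on components of $L\cup L''$) representing the merging $1$-handles. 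After adding the $1$-framed $2$-handle to obtain $\K_1$, these give banded unlink diagrams $(\K_1,L,v)$ for $U_A$ and $(\K_1,L\cup L'',v\cup v'')$ for $U_B$.

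The heart of the argument is to transform $(\K_1,L\cup L'',v\cup v'')$ into $(\K_1,L,v)$ by band moves. Mirroring the strategy of Lemma~\ref{ribbonlemma}, the crucial ingredient is the $M_3$ move, which uses the $\CP^1$ to modify linking between circles and bands in $E(\K_1)$. Proceeding by induction on $n$ and working with a leaf of the merge tree of the ribbon concordance, I would isolate one unknotted sphere $c\in L''$ together with the single band $b\in v''$ attached to it. Using $M_3$ moves together with band slides and swims to undo any linking of $c\cup b$ with the rest of the diagram, and then $2$-handle-band slides of $b$ over $\CP^1$ if needed to bring the far endpoint of $b$ back onto $c$, I aim to place $c\cup b$ in the local configuration of a cap (or cup) move, after which a single destabilization removes them. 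Iterating eliminates $L''$ and $v''$ entirely, producing $(\K_1,L,v)$; by Theorem~\ref{thm:generalswenton} the corresponding surfaces $U_A$ and $U_B$ are isotopic in $\CP^2$.

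The main obstacle is the reshaping step in the inductive argument: starting from a merging band (one end on a new sphere, the other on a distinct component of the diagram), I must use the flexibility afforded by the $\CP^1$ via $M_3$ and $2$-handle-band slides to bring both endpoints onto the same circle so that a cap/cup destabilization applies. Handling the various ways in which the $n$ merging bands can interconnect the new spheres with $A$ and with one another, and verifying that no unintended obstructions arise from the ambient banded unlink structure $(L,v)$, is the technically delicate part of the proof.
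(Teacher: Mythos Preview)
Your overall strategy matches the paper's: reduce via Lemma~\ref{sunukjianlemma} to the ribbon-concordance case, then use the $M_3$ move to trivialize the tubes coming from the ribbon concordance.

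There is, however, a confusion in your inductive step that makes the ``main obstacle'' you identify both spurious and unworkable as stated. First, the cup move does \emph{not} require both endpoints of the band on the same circle: it removes a small unknotted circle $c$ together with a single band $b$ having one end on $c$ and the other end on a different strand (this is exactly $0$--$1$ cancellation). So there is no need to ``bring both endpoints onto the same circle.'' Second, even if there were, the mechanism you propose---$2$-handle-band slides of $b$ over $\CP^1$---cannot move an endpoint of $b$ from one component of $L$ to another; those slides alter how the band runs, not which circle it is attached to.

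The paper's execution of the ribbon-concordance step is shorter and avoids this entirely. It works at the level of surfaces rather than purely diagrammatically: write $S'$ as $S$ tubed along arcs $b$ to a split unlink $\sqcup_k O$ of unknotted spheres, each bounding a $3$-ball $B_i$. The only obstruction to each tube being a genuine connect-sum is that some arc $b$ might pierce some $B_i$. Each such intersection is exactly a crossing of $b$ with the equatorial circle of $O_i$ in the diagram, and the $M_3$ move removes it. Once all such crossings are gone the tubing is $S\# O\#\cdots\# O = S$, and you are done---no cup/cap bookkeeping, no induction on a merge tree, and no need to unlink $b$ from the rest of $(L,v)$.
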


\begin{proof}
By Lemma~\ref{sunukjianlemma}, there exists a surface $S''$ so that $S$ and $S'$ are each ribbon-concordant to $S''$. It is sufficient to prove the following statement.
\begin{proposition}
Suppose $S$ is ribbon-concordant to $S'$ via a ribbon-concordance consisting of $k$ index-$0$ critical points and $k$ index-$1$ critical points. Then $S\#\CP^1$ is isotopic to $S'\#\CP^1$ in $\CP^2$.
\end{proposition}

\begin{proof}
The above setup is equivalent to saying that $S'$ is given by tubing $S$ to an unlink $\sqcup_k O$ of $k$ unknotted spheres along $k$ narrow tubes around arcs $b$. (There is an extra restriction on the endpoints of $b$, as this tubing must yield a connected surface.) 
Via move $M_3$ (Fig.~\ref{fig:m3move}), in $\CP^2$ we may remove all intersections of $b$ with the balls bounded by $\sqcup_k O$. That is, $S'\#\CP^1$ is isotopic to $(S\#O\#\cdots\# O)\# \CP^1=S\#\CP^1$.
\end{proof}
This completes the proof of Theorem~\ref{thm:0concordant}.\end{proof}

The proof of Lemma~\ref{ribbonlemma} also yields a result about band-sums. Recall the definition of band-summing.

\begin{definition}
Let $S_1$ and $S_2$ be oriented surfaces in $S^4$, contained in disjoint balls. Let $\gamma$ be an arc from a point on $S_1$ to a point on $S_2$. Then the band-sum $S_1\#_\gamma S_2$ is the surface $((S_1\sqcup S_2)\setminus\nu(\gamma))\cup(\gamma\times S^1)$.
\end{definition}
The framing on $\gamma$ to determine the $S^1$-bundle over $\gamma$ does not affect the resulting surface up to isotopy, so long as $\nu(\gamma)\cap(S_1\#_\gamma S_2)$ is oriented consistently with the orientations on $S_1$ and $S_2$.

Note that connect-summing is a specific example of band-summing. Now we show that blowing up $S^4$ trivializes the band-sum.

\begin{theorem}\label{thm:bandsum}
Let $S_1\#_\gamma S_2$ be a band-sum in $S^4$, where $S_1$ and $S_2$ are any smooth surfaces in $S^4$ contained in disjoint $4$-balls and $\gamma$ is any path between them. Then the unit surfaces $S_1\#_\gamma S_2\#\CP^1$ and $S_1\# S_2\#\CP^1$ are isotopic in $\CP^2$.
\end{theorem}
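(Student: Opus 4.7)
The plan is to mimic the proof of Theorem~\ref{thm:0concordant}, using the $M_3$ move to straighten the band-sum arc $\gamma$ once we pass from $S^4$ into $\CP^2$. Diagrammatically, band-summing along $\gamma$ adds a single band to the disjoint-union banded unlink, whose core follows $\gamma$; the question becomes whether this band can be straightened by band moves in the presence of the $1$-framed $2$-handle of $\CP^2$.

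First I will fix banded unlink diagrams $(\K_0,L_i,v_i)$ for $S_i$ $(i=1,2)$ supported in disjoint $3$-balls in $S^3$. After a small preliminary isotopy of $\gamma$, its endpoints may be assumed to lie on components $\ell_i\subset L_i$ (i.e.\ on the minimum disks of $S_i$). The band-sum $S_1\#_\gamma S_2$ is then described by the banded unlink diagram
\[
\mathcal{D}\ :=\ (\K_0,\,L_1\sqcup L_2,\,v_1\sqcup v_2\sqcup\{b_\gamma\}),
\]
where $b_\gamma$ is a band from $\ell_1$ to $\ell_2$ whose core follows $\gamma$ and whose framing is determined by the band-sum. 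Passing to $\CP^2$ by adding a $1$-framed $2$-handle $C$ as a small meridian of $\ell_1$ (far from $b_\gamma$, $v_1$, and $v_2$) yields a banded unlink diagram $\mathcal{D}_{\CP^2}$ for $S_1\#_\gamma S_2\#\CP^1$. The analogous diagram $\mathcal{D}_0$, obtained by replacing $b_\gamma$ with a trivial straight band $b_0$ between $\ell_1$ and $\ell_2$, describes $S_1\# S_2\#\CP^1$.

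By Theorem~\ref{thm:generalswenton} it suffices to relate $\mathcal{D}_{\CP^2}$ and $\mathcal{D}_0$ by a finite sequence of band moves in $\K_1$. The obstructions to isotoping $b_\gamma$ to $b_0$ purely in $S^3$ are: (a) intersections of $b_\gamma$ with the interiors of bands in $v_1\sqcup v_2$, (b) linkings of the core $\gamma$ with components of $L_1\sqcup L_2$, and (c) self-knotting of $\gamma$ in the complement of the remaining diagrammatic data. Obstruction (a) is resolved by band swims of $b_\gamma$ through the offending bands. Obstructions (b) and (c) are removed by repeatedly applying the $M_3$ move of Figure~\ref{fig:m3move}: each $M_3$ move uses the $1$-framed $2$-handle $C$ to push $b_\gamma$ across one of the disks bounded in $S^3$ by a component of $L_1\sqcup L_2$, exactly as in the proof of Theorem~\ref{thm:0concordant}, where $M_3$ was used to push tubing arcs across the balls bounded by an unlink of unknotted spheres. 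After finitely many such moves, together with isotopies in $E(\K_1)$ and framing corrections that preserve the orientable surface described, $b_\gamma$ becomes the straight band $b_0$, yielding $\mathcal{D}_0$.

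The main obstacle will be verifying that $M_3$, originally phrased for ribbon chord diagrams with paired dual bands, can be applied to the single band $b_\gamma$. The argument in Theorem~\ref{thm:0concordant} already handled essentially this situation, with $\gamma$ replaced by a tube with endpoints on a surface and on an unknotted sphere; the role of the ``dual band'' is played either by part of the banded unlink data incident to $\ell_1$ or by an auxiliary trivial band introduced via a cup move and later eliminated by a cap. A careful bookkeeping of the underlying $2$-handle-band slides and swims shows that each obstruction of type (b) or (c) can be eliminated at the cost of local band moves within $v_1\sqcup v_2$, which do not change the isotopy class of the described surface, so that the resulting diagram agrees with $\mathcal{D}_0$ up to band moves.
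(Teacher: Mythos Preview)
Your diagram for $S_1\#_\gamma S_2$ is wrong, and this is the source of the difficulty you identify at the end. The band-sum of surfaces in $S^4$ attaches a tube $\gamma\times S^1$ (an annulus), not a single saddle. In a banded unlink diagram this tube is encoded by a \emph{pair} of dual bands along $\gamma$, not a single band $b_\gamma$. (Euler characteristic check: one band changes $\chi$ by $-1$, but tubing changes $\chi$ by $-2$.) The paper's proof uses exactly this: ``two bands for the band-sum tube.''

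This is not a cosmetic point. The $M_3$ move of Figure~\ref{fig:m3move} is a $2$-handle-band slide on one band of the dual pair followed by a $2$-handle-band swim through the \emph{other} band; the net effect is a crossing change of $\gamma$ with $L$ while leaving the $1$-framed circle a small meridian. With only a single band $b_\gamma$, after the slide the $2$-handle circle links $L$ nontrivially and there is no companion band to swim through; your suggested fixes (borrowing a band incident to $\ell_1$, or inserting a cup/cap pair) do not obviously restore the meridian without altering the rest of the diagram, and would require a genuine argument.

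Once you record the tube correctly as two dual bands, the proof is immediate and matches the paper: $M_3$ changes any crossing of $\gamma$ with $L$, and the chord-diagram move $M_2$ (which becomes band slides and swims on the dual pair) changes crossings of $\gamma$ with $v$ and slides the endpoints of $\gamma$ along $L$ through $v$. Hence $\gamma$ may be replaced by any arc, in particular a trivial one, and no auxiliary bands or cup/cap bookkeeping are needed.
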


\begin{proof}
Fix band diagrams for $S_1$ and $S_2$. Then $S_1\#_\gamma S_2\#\CP^1$ has a banded unlink diagram $(\K_1,L,v)$ consisting of the union of the banded unlink diagrams for $S_1,S_2$ with two bands for the band-sum tube and the $+1$-surgery curve of $\CP^2$. The $M_3$ move allows one to change any crossing of $L$ with $\gamma$. (The $M_2$ move allows one to change any crossing of $v$ with $\gamma$ and to slide the endpoints of $\gamma$ along $L$, through $v$.) Therefore, in $\CP^2$ we may take $\gamma$ to be an arbitrary arc, and in particular find $S_1\#_\gamma S_2\#\CP^1$ is isotopic to $S_1\# S_2\#\CP^1$.
\end{proof}

\subsection{Deform-spun knots}\label{subsec:twist}
We move on from $0$-concordant knots to twist-spun knots~\cite{zeeman1965twisting} and deform-spun knots~\cite{litherland1979deforming}.

\begin{definition}[\cite{litherland1979deforming} generally,~\cite{zeeman1965twisting} for twist-spun knots]
Let $K^1\subset B^3$ be a $1$-stranded tangle. Let $f:B^3\to B^3$ be a diffeomorphism fixing $\boundary B^3$ and $K^1$ pointwise. Then the {\emph{$f$-deform spun knot}} of $K^1$ is $f K^1=K^1\times I/\sim$, contained in $S^4=(B^3\times I)/((x,1)\sim(f(x),0), y\times S^1\sim\pt$ for $y\in\boundary B^3$).
Let $T \subset B^3$ be the torus $\boundary(\nu(K^1\cup \boundary B^3))$. Parameterize $T=S^1\times S^1$ so that $S^1\times 0$ is a meridian of $K^1$ and $[0\times S^1]=0\in H_1(B^3\setminus K^1)$ (i.e. $0\times S^1$ is a $0$-framed longitude for $K^1$). Let $T\times I$ be a regular neighborhood of $T$ contained in the interior of $B^3\setminus K^1$.

Define $\tau,\rho:T\times I$ by $\tau(x,y,t)=(x+2\pi t,y,t)$, $\rho(x,y,t)=(x,y+2\pi t,t)$. Extend $\tau,\rho$ to the rest of $B^3$ by the identity map. Then $\tau^n \rho^p K^1$ is called the {\emph{$n$-twist $p$-roll spun knot}} of $K^1$. (When $n=0$ or $p=0$, we may say $p$-roll spun knot of $K^1$ or $n$-twist spun knot of $K^1$, respectively.) See Figure~\ref{fig:twistball} if this construction is unfamiliar.

\begin{figure}
\includegraphics[width=.35\textwidth]{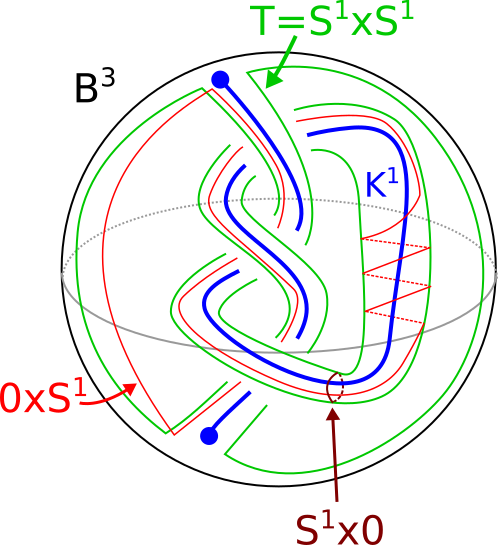}
\caption{The setup for Litherland's description of twist-roll spun knots~\cite{litherland1979deforming}. Here, $K^1$ is a $1$-stranded tangle in ball $B^3$. The torus $T$ is given by $\boundary(\nu(K^1\cup\boundary B^3))$. We parameterize $T\cong S^1\times S^1$ so that its longitude $0\times S^1$ is nullhomologous in $B^3\setminus\nu(K^1)$, and its meridian $S^1\times 0$ bounds a disk in $\nu(K^1)$.}\label{fig:twistball}
\end{figure}

Let $K$ be a classical knot in $S^3$ (that is, a {\emph{$1$-knot}}) and $B\subset S^3$ a small $3$-ball meeting $K$ in a trivial arc. If $(B^3, K^1)=(S^3\setminus B, K\setminus B)$, then we may write $f K$ to indicate $f K^1$, and refer to the $f$-deform spun knot of $K^1$ and the $f$-deform spun knot of $K$ interchangeably.
\end{definition}

\begin{figure}
\includegraphics[width=\textwidth]{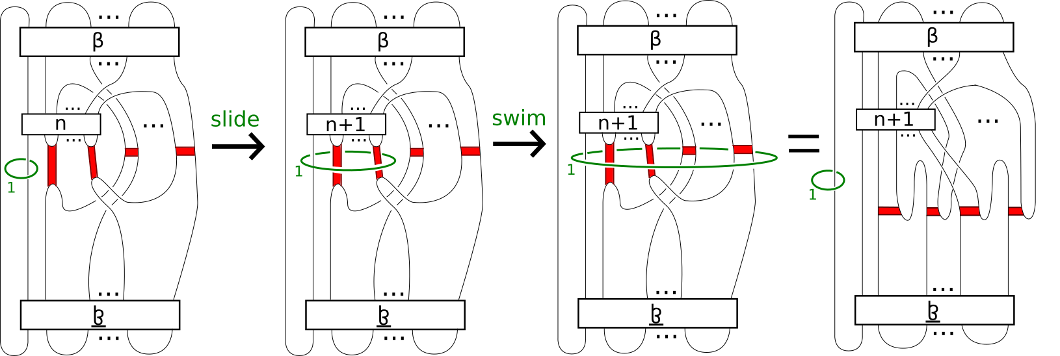}
\caption{Left to right: a banded unlink diagram for $\tau^n K\#\CP^1$ (see e.g.~\cite[Section 5.2]{meier2017bridge}), where $K$ in bridge position is the closure of tangle $\beta$. We slide half the bands over the $2$-handle of $\CP^2$, and then swim the $2$-handle through the remaining bands. We obtain a banded unlink for $\tau^{n+1} K\#\CP^1$.}\label{fig:twistspun}
\end{figure}

\begin{theorem}\label{thm:twistspun}
Let $K$ be a $1$-knot, so that $\tau^n K$ is the $n$-twist spun knot of $K$. 
Then $U_{\tau^n K}$ is isotopic to $\CP^1$.
\end{theorem}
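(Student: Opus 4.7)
The plan is to use Theorem~\ref{thm:generalswenton} to reduce the isotopy class of $U_{\tau^n K}$ to that of $U_{\tau^0 K}$, i.e.\ to the unit surface associated to the classical spin of $K$, and then invoke the ribbon unknotting result of Subsection~\ref{subsec:ribbon}. The induction step is driven entirely by the two new moves in our calculus that involve $2$-handles.

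First I would write down an explicit banded unlink diagram for $\tau^n K\#\CP^1$. Starting from a bridge-position diagram of $K$ realized as the plat closure of some $2m$-strand tangle $\beta$, the construction in~\cite[Section~5.2]{meier2017bridge} produces a banded unlink $(\K_0,L,v)$ describing $\tau^n K\subset S^4$ whose bands come in two parallel families: an upper family of $m$ bands attached across $\beta$ and a lower family of $m$ bands carrying the $n$ full twists introduced by $\tau^n$. Adjoining a single $+1$-framed meridional $2$-handle to $\K_0$, well separated from $L\cup v$, gives the Kirby diagram $\K_1$ for $\CP^2$ and a banded unlink diagram $(\K_1,L,v)$ for $U_{\tau^n K}=\tau^nK\#\CP^1$.

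Next I would carry out the sequence of moves shown in Figure~\ref{fig:twistspun}. Concretely: perform $m$ successive $2$-handle-band slides, pushing each of the bands in one of the two parallel families over the $+1$-framed $2$-handle; then apply $2$-handle-band swims to pass the attaching circle of the $2$-handle through the remaining (dual) family of bands. Each of these is a band move in the sense of Subsection~\ref{sec:diagramforbandposition}. After they are all completed, the new banded unlink diagram in $\K_1$ is identified, by direct inspection of the twisting contribution picked up by the bands, with the standard diagram of $\tau^{n+1}K\#\CP^1$. Hence by Theorem~\ref{thm:generalswenton},
\[
\tau^n K\#\CP^1 \;\simeq\; \tau^{n+1} K\#\CP^1
\]
in $\CP^2$ for every integer $n$, where $\simeq$ denotes smooth isotopy. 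Since band moves are invertible, we may iterate this equivalence in either direction to obtain $U_{\tau^n K}\simeq U_{\tau^0 K}=U_{\sigma K}$, where $\sigma K$ denotes the ordinary spin of $K$.

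Finally, the classical spin $\sigma K\subset S^4$ is well known to be a ribbon $2$-sphere (it bounds a ribbon $3$-ball obtained by spinning a bridge disk for $K$), so Lemma~\ref{ribbonlemma} gives $U_{\sigma K}\simeq \CP^1$ in $\CP^2$. Chaining these isotopies yields $U_{\tau^n K}\simeq \CP^1$, as required. The main obstacle I anticipate is the bookkeeping in the middle paragraph: one has to verify carefully that after the $m$ $2$-handle-band slides and the $2$-handle-band swims, the resulting diagram is not merely some banded unlink diagram for a unit surface but is, up to the Morse-preserving band moves of Section~\ref{sec:swenton}, the standard Meier--Zupan diagram for $\tau^{n+1}K\#\CP^1$; the framing on the $+1$-handle ensures that each pass introduces precisely one additional full twist in the correct family of bands, and it is this local framing calculation that makes the induction go through.
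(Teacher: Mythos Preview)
Your induction step is the same as the paper's: the sequence of $2$-handle-band slides followed by $2$-handle-band swims in Figure~\ref{fig:twistspun} is exactly the isotopy the authors use to pass from $\tau^nK\#\CP^1$ to $\tau^{n+1}K\#\CP^1$. The difference is in the base case. You run the induction down to $n=0$ and then appeal to the fact that the classical spin $\sigma K$ is ribbon, together with Lemma~\ref{ribbonlemma}, to conclude $U_{\sigma K}\simeq\CP^1$. The paper instead runs the induction to $n=1$ and invokes Zeeman's theorem~\cite{zeeman1965twisting} that the $1$-twist spin $\tau K$ is already the unknotted $2$-sphere in $S^4$, so $U_{\tau K}=\CP^1$ immediately.

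Both routes are valid. The paper's endgame is cleaner: Zeeman gives unknottedness in $S^4$ directly, so no further work in $\CP^2$ is required. Your endgame stays entirely within the machinery of the paper (Lemma~\ref{ribbonlemma} and the $M_3$ move), at the cost of one extra classical fact (spins are ribbon) and a less immediate finish. Either way the heart of the argument is the same $n\mapsto n+1$ diagrammatic isotopy.
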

\begin{proof}
In Figure~\ref{fig:twistspun}, we demonstrate an isotopy in $\CP^2$ taking $U_{\tau^n K}$ to $U_{\tau^{n+1} K}$.

 Inductively, $U_{\tau^n K}$ is isotopic to $U_{\tau K}$. By~\cite{zeeman1965twisting}, $\tau K$ is the unknot, so $U_{\tau^n K}$ is isotopic to $\CP^1$.
\end{proof}

We can say something stronger about the general family of deform-spun knots.

\begin{theorem}\label{thm:twistdeform}
Let $K$ be a $1$-knot. Let $f K$ be a deform-spun knot of $K$. Then $f K\#\CP^1$ is isotopic to $\tau^n f K\#\CP^1$ for any $n$.
\end{theorem}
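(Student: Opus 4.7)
The plan is to reduce Theorem~\ref{thm:twistdeform} to a local application of the diagrammatic move used in the proof of Theorem~\ref{thm:twistspun} (Figure~\ref{fig:twistspun}). Writing $\tau^n fK = \tau\cdot\tau^{n-1}fK$, it suffices by induction on $|n|$ to show that $fK\#\CP^1$ is isotopic to $\tau fK\#\CP^1$ in $\CP^2$, with the case $n<0$ handled by the inverse move.

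To set up, I would build a banded unlink diagram $(\K_0,L,v)$ for $fK\subset S^4$ from the mapping-torus description of $fK$: pick a $(1,b)$-bridge decomposition of $K^1\subset B^3$ and an isotopy $f_t:B^3\to B^3$ from $\id$ to $f$ through diffeomorphisms fixing $\boundary B^3$ setwise and $K^1$ pointwise, then track the bridge unlink and bridge tangle of $K^1$ through the movie $\{f_t(K^1)\}_{t\in[0,1]}$. After reparametrization I may assume $f_t\equiv\id$ on some subinterval $[0,\epsilon]\subset I$; the portion of the banded unlink diagram contributed by this subinterval is exactly the ``$n=0$'' piece appearing on the left of Figure~\ref{fig:twistspun}, while the bands recording the nontrivial motion $f_t|_{[\epsilon,1]}$ can be arranged to lie outside a neighborhood of it. Blowing up $S^4$ to $\CP^2$ by attaching a $+1$-framed $2$-handle as a meridian of the distinguished bridge circle in this standard piece (as in the remark following Definition~\ref{def:unitsurface}) produces a banded unlink diagram $(\K_1,L,v)$ for $fK\#\CP^1$ whose structure near the $\CP^2$-$2$-handle is identical to the $n=0$ input of Figure~\ref{fig:twistspun}.

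Next I would apply the sequence of $2$-handle-band slides and $2$-handle-band swims of Figure~\ref{fig:twistspun} in this local neighborhood. The move is supported in a regular neighborhood of the $\CP^2$-$2$-handle together with the nearby bridge circle and its bands, so the bands encoding $f_t|_{[\epsilon,1]}$ are fixed throughout. Tracing through the mapping-torus construction, the resulting diagram is exactly the banded unlink diagram one would obtain from the loop $\tau f_t$, hence describes $\tau fK\#\CP^1$. Theorem~\ref{thm:generalswenton} then upgrades this sequence of band moves to an ambient isotopy, yielding $fK\#\CP^1\simeq\tau fK\#\CP^1$, and induction completes the argument.

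The main obstacle will be justifying the locality assertion: that the diagram for $fK\#\CP^1$ can be arranged so that the bands recording the nontrivial part of $f_t$ lie entirely outside the region in which the Figure~\ref{fig:twistspun} move is performed. The reparametrization step isolates the standard spun-knot portion of the movie in $[0,\epsilon]$, and the remaining task---pushing the $f_t$-bands off a tubular neighborhood of a small unknotted arc in $M_{3/2}\cong S^3$---is routine transversality in a $3$-manifold, combined with the freedom (granted by Theorem~\ref{thm:generalswenton}) to modify the diagram by band moves in $E(\K_1)$ before carrying out the core slides and swims.
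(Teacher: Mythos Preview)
Your approach is correct and essentially identical to the paper's: the paper's proof is a single sentence pointing to Figure~\ref{fig:twistdeform}, which performs exactly the $2$-handle-band slides and swims of Figure~\ref{fig:twistspun} on a banded unlink diagram for $fK\#\CP^1$ (with the deformation $f$ drawn as a black box alongside the tangle $\beta$), yielding a diagram for $\tau fK\#\CP^1$. One small correction to your setup: the isotopy $f_t$ should fix $\partial B^3$ pointwise but must \emph{not} be required to fix $K^1$ pointwise---otherwise the movie $\{f_t(K^1)\}$ is constant and, more seriously, such an $f_t$ need not exist for nontrivial $f$ (e.g.\ $f=\tau$); with that fix your locality argument goes through.
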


\begin{proof}
Figure~\ref{fig:twistdeform} shows an explicit isotopy from $f K\#\CP^1$ to $\tau f K\#\CP^1$. 
\end{proof}

\begin{figure}
\includegraphics[width=\textwidth]{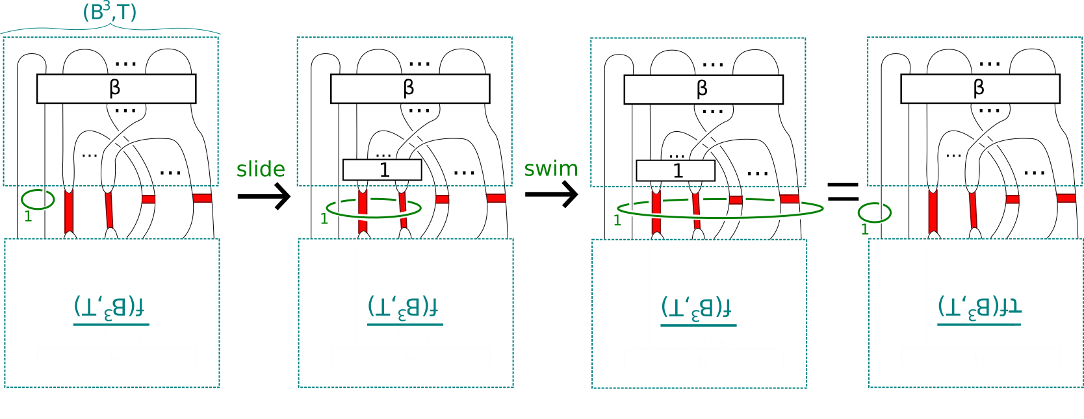}
\caption{Left to right: a diagram for $fK\#\CP^1$, where $K$ in bridge position is the closure of tangle $\beta$. We slide half the bands over the $2$-handle of $\CP^2$, and then swim the $2$-handle over the remaining bands. We obtain a diagram for $\tau fK\#\CP^1$.}\label{fig:twistdeform}
\end{figure}

\begin{corollary}\label{bergecor}
Let $K$ be a knot with an integral lens space surgery. Then $\tau^n \rho K\#\CP^1$ is isotopic to the standard $\CP^1$ for any $n\in \Z$.
\end{corollary}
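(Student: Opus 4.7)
The plan is to combine Theorem~\ref{thm:twistdeform} with a classical fact about twist-roll spun knots of knots admitting integral lens space surgeries. Since the roll map $\rho$ is by construction a diffeomorphism of $B^3$ fixing $\boundary B^3$ and the tangle $K^1$ pointwise, the roll-spin $\rho K$ is itself a deform-spun knot of $K$. Applying Theorem~\ref{thm:twistdeform} with $f=\rho$ shows that $\rho K \# \CP^1$ is isotopic to $\tau^n \rho K \# \CP^1$ in $\CP^2$ for every $n\in\Z$. It therefore suffices to exhibit a single integer $m$ for which $\tau^m \rho K \# \CP^1$ is isotopic to the standard $\CP^1$.

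Let $N$ be the slope of an integral lens space surgery on $K$, so that $S^3_N(K)$ is a lens space. A classical result on deform-spun knots going back to Litherland~\cite{litherland1979deforming} (with the ribbon structure on the resulting $2$-sphere being read off the lens space surgery description) asserts that under this hypothesis $\tau^N \rho K$ is a ribbon $2$-sphere in $S^4$. Granted this input, I apply Lemma~\ref{ribbonlemma} in the genus-zero case to conclude that $\tau^N \rho K \# \CP^1$ is isotopic to $\CP^1\subset \CP^2$. Combined with the reduction from the previous paragraph, this proves the corollary for all $n\in\Z$.

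The main (and only non-formal) step will be pinning down the classical statement that $\tau^N \rho K$ is ribbon whenever $K$ admits integral $N$-surgery to a lens space; once that is in hand, the argument reduces to diagrammatic manipulation using tools already established in the paper. If one only has access to a weaker statement asserting that $\tau^N \rho K$ is $0$-concordant to the unknot, then Theorem~\ref{thm:0concordant} may be substituted for Lemma~\ref{ribbonlemma} in the argument above with the same conclusion.
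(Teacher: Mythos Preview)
Your reduction via Theorem~\ref{thm:twistdeform} is exactly the paper's strategy: one only needs a single $m$ for which $\tau^m\rho K\#\CP^1$ is standard. The difference lies in the ``classical input'' you invoke. What is actually available from Litherland~\cite{litherland1979deforming} is not a ribbon statement but a fibration statement: $\tau^m\rho K$ is fibered with closed fiber the $1$-Dehn filling of the $m$-fold cyclic cover of $S^3\setminus\nu(K)$. Taking $m=N$ equal to the lens space surgery slope, this fiber is an $N$-fold cover of $S^3_N(K)=L(N,q)$, hence $S^3$; thus $\tau^N\rho K$ bounds a smooth $3$-ball and is the \emph{unknot} (an observation of Teragaito~\cite{teragaito1994twistroll}). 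This is strictly stronger than what you asserted, and it makes your appeal to Lemma~\ref{ribbonlemma} (or Theorem~\ref{thm:0concordant}) unnecessary: $\tau^N\rho K\#\CP^1$ is literally $\CP^1$.

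So your outline is correct in spirit and follows the same route as the paper, but the sentence ``a classical result \ldots asserts that $\tau^N\rho K$ is a ribbon $2$-sphere'' should be replaced by the Litherland/Teragaito argument above; as written, you are citing a statement that is not in~\cite{litherland1979deforming} and leaving the key step unjustified.
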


\begin{proof}
Say $k$-surgery on $K$ yields a lens space $L(k,q)$. Teragaito~\cite{teragaito1994twistroll} observed that $\tau^k \rho K$ is the unknot, as follows:

By Litherland~\cite{litherland1979deforming}, $\tau^k \rho K$ is a fibered knot, with closed fiber obtained by $1$-Dehn filling the $k$-fold cyclic cover of $S^3\setminus\nu(K)$. That is, the closed fiber is a $k$-fold cover of $L(k,q)$, so the closed fiber is $S^3$. Therefore, $\tau^k \rho K$ bounds a smooth $3$-ball.

By Theorem~\ref{thm:twistdeform}, $\tau^n \rho K\#\CP^1$ is isotopic to $\tau^k \rho K\#\CP^1=\CP^1$ for any $n$.
\end{proof}

\subsection{Satellites and miscellaneous examples}
Consider the family of $2$-knots $K_{pq}$ illustrated in Figure~\ref{fig:nash} (top). Nash and Stipsicz~\cite{nash2012gluck} showed via Kirby calculus that the Gluck twist on any of these $2$-knots yields $S^4$. In fact, by translating their handle slides into band moves, we observe that $K_{pq}\#\CP^1$ is isotopic to the standard $\CP^1$ in $\CP^2$.

\begin{figure}
\includegraphics[width=.75\textwidth]{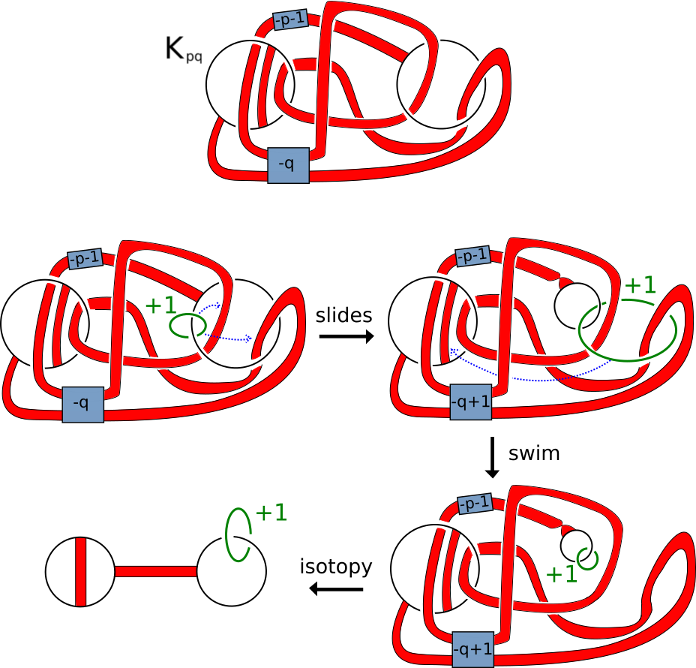}
\caption{Top: The $2$-knot $K_{pq}\subset S^4$. Nash and Stipsicz~\cite{nash2012gluck} showed that performing the Gluck twist on $K_{pq}$ yields $S^4$. Bottom two rows: An isotopy from $K_{pq}\#\CP^1$ to the standard $\CP^1$ in $\CP^2$.}\label{fig:nash}
\end{figure}

Most of the results of Subsections~\ref{subsec:ribbon} and~\ref{subsec:twist} can be consolidated into the single following statement.

\begin{theorem}\label{cp2theorem}
Let $F=S\#\CP^1\subset\CP^2$ be a genus-$g$ unit surface knot, where $S\subset S^4$ is an orientable surface that is $0$-concordant to a band-sum of twist-spun knots and unknotted surfaces. Then $F$ is isotopic to $\CP^1\#gT$, where $\CP^1\# gT$ indicates the standard $\CP^1$ trivially stabilized $g$ times.
\end{theorem}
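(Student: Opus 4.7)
The plan is to stitch together the results already established in Subsections~\ref{subsec:ribbon} and~\ref{subsec:twist}, using one simple structural observation: the unit surface operation $S \mapsto S\#\CP^1$ distributes through connect-sum, in the sense that if $S \simeq A \# B$ in $S^4$, then $S\#\CP^1 \simeq A\#(B\#\CP^1)$ in $\CP^2$ (the tubing arc from $S$ to $\CP^1$ may be chosen to terminate on the $B$ summand). So each of Theorems~\ref{thm:0concordant},~\ref{thm:bandsum},~\ref{thm:twistspun}, and Lemma~\ref{ribbonlemma} can be applied ``locally'' to a chosen summand of $F$.

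First, since $S$ is $0$-concordant to some band-sum $S' = K_1 \#_{\gamma_1} \cdots \#_{\gamma_{n-1}} K_n$ of twist-spun knots and unknotted surfaces, Theorem~\ref{thm:0concordant} gives $F = S\#\CP^1 \simeq S'\#\CP^1$ in $\CP^2$. Next, iterating Theorem~\ref{thm:bandsum} replaces each band-sum with an ordinary connect-sum inside $\CP^2$, so $F \simeq (K_1 \# \cdots \# K_n)\#\CP^1$. Grouping the $K_i$ into the twist-spun summands $T_1, \ldots, T_p$ and the unknotted summands $U_1, \ldots, U_r$ and using associativity, one can write this as $(U_1 \# \cdots \# U_r) \# \big((T_1 \# \cdots \# T_p) \# \CP^1\big)$. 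Peeling off one twist-spun summand at a time and applying Theorem~\ref{thm:twistspun} (which states $T_j \# \CP^1 \simeq \CP^1$), each twist-spun knot is absorbed into the $\CP^1$ factor, leaving $F \simeq (U_1 \# \cdots \# U_r)\#\CP^1$.

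Finally, the connect-sum $U_1 \# \cdots \# U_r$ is itself an unknotted orientable surface, of genus $g_1 + \cdots + g_r$, hence ribbon. Lemma~\ref{ribbonlemma} then gives $(U_1 \# \cdots \# U_r)\#\CP^1 \simeq \CP^1 \# (\sum g_l) T$. For the genus count, note that $0$-concordance preserves genus, band-summing orientable surfaces sums genera (an Euler characteristic computation), and each twist-spun knot $T_j$ has genus $0$; hence $\sum g_l$ equals the genus $g$ of $S$, so $F \simeq \CP^1 \# gT$.

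There is no single hard step here; the theorem is essentially a bookkeeping consolidation. The one thing that deserves care is the distributivity observation in the first paragraph, which allows the unit surface operation to commute through arbitrary reorderings of connect-summands and lets every previously proved result be applied to a single summand at a time. Once that is set up, the remaining arguments are immediate from the cited theorems together with a short verification that genus is additive under the operations involved.
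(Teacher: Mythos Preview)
Your proposal is correct and matches the paper's approach exactly: the paper does not give a separate proof of Theorem~\ref{cp2theorem}, but simply introduces it with the sentence ``Most of the results of Subsections~\ref{subsec:ribbon} and~\ref{subsec:twist} can be consolidated into the single following statement.'' Your write-up is precisely the intended consolidation, chaining Theorems~\ref{thm:0concordant}, \ref{thm:bandsum}, \ref{thm:twistspun}, and Lemma~\ref{ribbonlemma} in the natural order, together with the (standard) well-definedness of connect-sum on isotopy classes and the straightforward genus bookkeeping.
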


The results of Theorems~\ref{thm:twistspun} and~\ref{thm:twistdeform} extend to satellite knots. This illustrates the strength of this diagrammatic packaging, as in general these knots may not be twist-spins or even fibered (see e.g.~\cite{yoshikawa1982satellite}).

\begin{definition}
Let $K_P\subset V$ be a $2$-sphere embedded in $V\cong S^2\times D^2$. Let $K_C$ be a $2$-sphere embedded in $S^4$. Fix a diffeomorphism $\phi:V\to \nu(K_C)$. Let $K=f(K_P)\subset S^4$. We call $K$ the {\emph{satellite}} of {\emph{companion}} $K_C$ with {\emph{pattern}} $(K_P,V)$.
\end{definition}

Let $K$ be the satellite of companion $K_C$ with pattern $(K_P,V)$. To obtain a diagram of $K$, we view $S^4=S^3\times[-1,1]/(S^3\times 1\sim\pt, S^3\times -1\sim\pt)$, where $K_C\cap S^3\times 0$ is a connected knot and $K_C\cap (S^3\times[-1,0])$ and $K_C\cap (S^3\times[0,1])$ are ribbon disks (this is the normal form of~\cite{kawauchi1982normalform}).

 We take $V=S^2\times D^2\subset S^4$ so that $W:=V\cap (S^3\times 0)\cong S^1\times D^2$, and $V\cap(S^3\times[-1,0])\cong V\cap (S^3\times[0,1])\cong D^2\times D^2$. See Figure~\ref{fig:pattern} (left) for a schematic.

\begin{figure}
\includegraphics[height=1.5in]{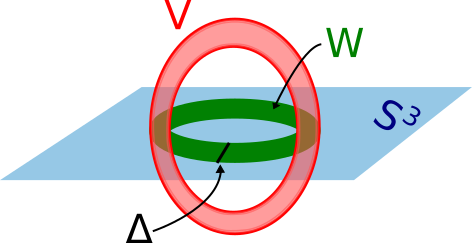}\hspace{.25in}\includegraphics[height=1.5in]{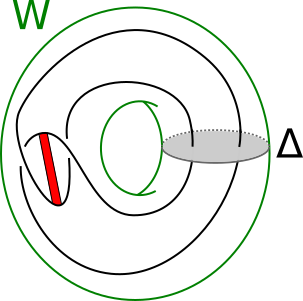}
\caption{{\bf{Left:}} A schematic of $V$ embedded in $S^4$. The intersection $V\cap S^3\times 0$ is the solid torus $W$. {\bf{Right:}} A banded unlink of a pattern $K_P\subset V$. With respect to this choice of $\Delta$, the pattern has geometric winding $2$. (Note in this diagram, $K_P$ is unknotted inside $V$, so could be isotoped to a pattern of geometric winding $0$.)}\label{fig:pattern}
\end{figure}

Draw a banded unlink (in $\K_0$) for $K_P$ sitting inside $W$. (Note the original unlink and the one obtained by resolving all bands are unlinked in $B^3=W\cup(0$-framed $2$-handle$)$ but may be nontrivial within the solid torus $W$.) Fix a meridian disk $\Delta$ of $W$ disjoint from all bands in the diagram for $K_P$, and take $K_P$ to be transverse to $\Delta$. We say the {\emph{geometric winding}} of $K_P$ is the unsigned intersection $K_P\cap\Delta$. (Note this number depends on $\Delta$.)

Now draw a banded unlink (in $\K_0$) for $K_C$, isotoped to lie inside $W':=\nu(K_C)\cap(S^3\times 0)\cong S^1\times D^2$ (we first draw the diagram in $S^3=S^3\times 0 $ and perturb to be disjoint from a core circle of $S^3\setminus W$, and then project to $W$.) Fix a meridian disk $\Delta'$ for $W'$ which intersects the banded unlink transversely in one point.

Isotope the diffeomorphism $\phi:V\to\nu(K_C)$ so that $\phi(W)=W'$, and $\phi(\Delta\times I)=W'\setminus(\Delta'\times I)$. Choose $\phi$ so that every saddle of $K$ either corresponds to a saddle of $K_P$ or lies above or below a saddle of $K_C$. Each saddle of $K_C$ gives rise to $n$ saddles of $K$. 
%

Therefore, the satellite $K$ has $a+nb$ critical points of index $1$, where $a$ is the number of index-$1$ critical points of $K_P$ and $b$ the number of index-$1$ critical points of $K_C$. Then we obtain a banded unlink diagram (in $\K_0$) for $K$ by taking the standard $0$-framed satellite of $K_P\cap (S^3\times 0)\subset W$ around $K_C\cap(S^3\times 0)$ (both of these cross-sections are knots), attaching the bandsbands corresponding to $K_P$, and then attaching $n$ copies of each bands corresponding to $K_C$ (pushing them above $S^3\times 0$). See Figure~\ref{fig:satellite}.

\begin{figure}
\includegraphics[width=\textwidth]{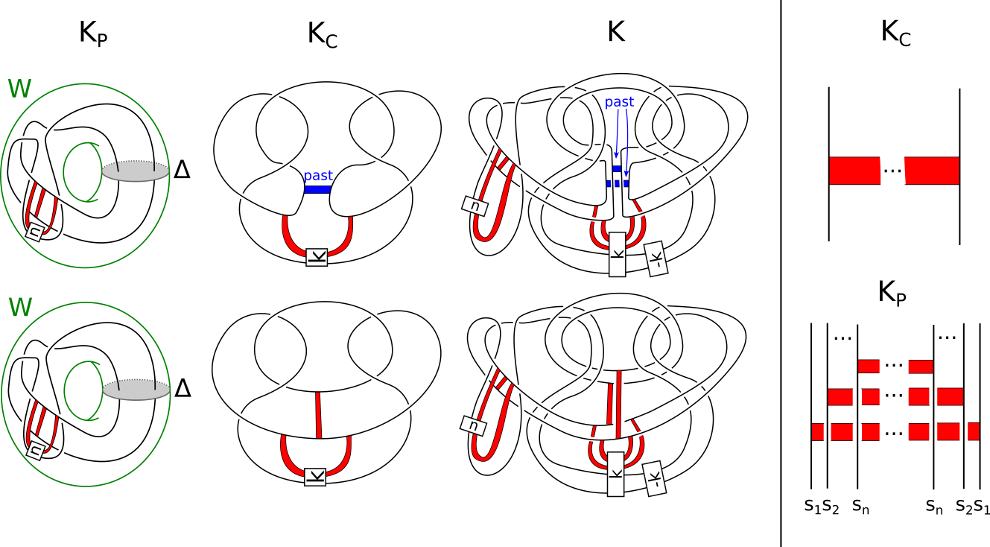}
\caption{{\bf{Left, top row:}} We draw a banded unlink diagram for $K_P$ and a normal form diagram for $K_C$. (a band diagram in which some bands lie below the pictured cross-section, as indicated, so that intersection of $K_C$ with this time slice is a knot). From these two diagrams, we obtain a diagram of $K$. {\bf{Left, bottom row:}} We push all bands above the pictured cross-section, so that each diagram is a banded unlink diagram. {\bf{Right:}} We show how to draw the $n$ bands of a banded unlink diagram for $K$ corresponding to a band in the banded unlink diagram for $K_C$. In this picture, $s_i$ indicates one arc in $\phi(\Delta\times I)$.}
\label{fig:satellite}. 
\end{figure}

We give a new isotopy move of $\CP^1$ inside $(S^2\times D^2)\#\CP^2$: the double slide.  Refer to Figure~\ref{fig:doubleslide}. 
The effect of the double slide move is to change the intersection of $\CP^1$ with $W$ by two slides over a longitude of $W$ (with opposite sign). Via this move, we may replace the intersection of $\CP^1$ with $W$ with any isotopic curve in $W\cup 0$-framed $2$-handle $\supset W$ representing the same element of $H_1(W)$.

\begin{figure}
\includegraphics[width=\textwidth]{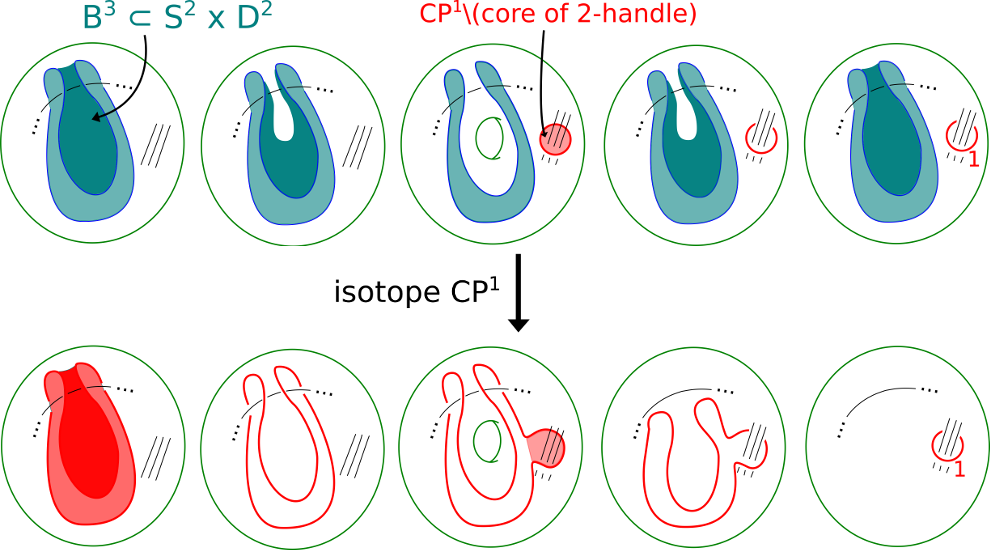}
\caption{{\bf{Top:}} a movie of cross-sections of $S^2\times D^2\#\CP^2$. The central cross-section is a solid torus $W$. In shaded blue, we draw a ball $B$ in $S^2\times D^2$. In red (to the right), we draw the disk $\CP^1\cap S^2\times D^2$. The rest of $\CP^1$ is a core of the $1$-framed $2$-handle. Black strands are contained in $K_P\subset S^2\times D^2$. {\bf{Bottom:}} We isotope $\CP^1$ through $B$ while fixing the unit surface for $K_P$. }\label{fig:doubleslide}
\end{figure}

\begin{theorem}\label{satellitetheorem}
Let $K$ be a satellite of companion $K_C$ with pattern $(K_P,V)$. Say $[K_P]=m[S^2\times \pt]$ in $H_2(V;\Z)$. View $V\subset S^4$ as a neighborhood of an unknotted $2$-sphere. Assume $K_P\subset V\subset S^4$ is $0$-concordant to a band-sum of twist-spun knots.

\begin{itemize}
\item If $m=0$, then $K\#\CP^1$ is isotopic to $\CP^1$ in $\CP^2$.
\item If $m=\pm 1$, then $K\#\CP^1$ is isotopic to $\pm K_C\#\CP^1$ in $\CP^2$.
\item If $|m|>1$, then $K\#\CP^1$ is isotopic to $K'\#\CP^1$, where $K'$ is a satellite with companion $K_C$ and pattern $(O_m, V)$, where $O_m\subset V\subset S^4$ is an unknotted sphere and $[O_m]=m[S^2\times\pt]\in H_2(V;\Z)$.
\end{itemize}

\end{theorem}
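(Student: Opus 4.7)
The strategy is to carry out the simplification of the pattern $K_P$ entirely within the subregion $V \# \CP^2 \subset \CP^2$, using the local copy of $\CP^1$ positioned inside $V$ via the double slide move. Since moves supported in $V \# \CP^2$ do not touch the companion bands of $K_C$ in the satellite diagram, the satellite structure of $K \# \CP^1$ is preserved, and the reduction of the pattern to an unknotted sphere of winding $m$ yields the three conclusions in a uniform way.

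First I would form the banded unlink diagram $(\K_1, L, v)$ of $K \# \CP^1 \subset \CP^2$ using the satellite recipe illustrated in Figure~\ref{fig:satellite}, combined with the $+1$-framed unknot representing $\CP^1$. Taking the connect sum point inside $\nu(K_C)$ makes $\CP^1$ live inside $\nu(K_C) \# \CP^2 \cong V \# \CP^2$, and via the diffeomorphism $\phi \colon V \to \nu(K_C)$ we can think of the relevant portion of the diagram as a diagram for the unit surface $K_P \# \CP^1$ inside $V \# \CP^2$, decorated by the companion bands of $K_C$ that traverse $V$. Any band move confined to $V \# \CP^2$ acts only on the pattern bands, so preserves the satellite structure of $K$.

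Next I would use the hypothesis. By Lemma~\ref{sunukjianlemma} applied to the $0$-concordance from $K_P$ to a band-sum $B$ of twist-spun knots, both $K_P$ and $B$ are ribbon-concordant to a common $S''$. Translating each ribbon-concordance into band attachments to unknotted spheres along arcs (as in the proof of Theorem~\ref{thm:0concordant}), and using the local $\CP^1$ via $M_3$-moves to re-route any attaching arc that tries to exit $V$, one realizes the composite ribbon-concordance from $K_P$ to $B$ by moves supported in $V \# \CP^2$. Applying Theorem~\ref{thm:bandsum} locally then converts the band-sum into a connected sum of twist-spun knots, and Theorem~\ref{thm:twistspun} unknots each summand; at the end, the pattern has been isotoped inside $V$ to an unknotted sphere $O_m \subset V$ of winding $m$. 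Since $V = S^2 \times D^2$, such a sphere is determined up to isotopy in $V$ by its class $m \in H_2(V;\Z)$, so the three cases now read off directly: for $m = 0$, $O_0$ lies in a ball in $V$, so $K = \phi(O_0)$ lies in a ball in $\nu(K_C)$ and $K \# \CP^1 \simeq \CP^1$; for $m = \pm 1$, $O_{\pm 1}$ is a signed copy of the core sphere of $V$, giving $K \# \CP^1 \simeq \pm K_C \# \CP^1$; and for $|m| > 1$, $K \# \CP^1 \simeq K' \# \CP^1$ with $K'$ the satellite of $K_C$ with unknotted pattern $(O_m, V)$.

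The main obstacle I expect is the localization step in the third paragraph: the hypothesized $0$-concordance lives in all of $S^4$, and its associated ribbon-concordance may a priori attach tubes to spheres lying outside $V$. Pushing these tubes back into $V$ using $M_3$-moves over the local $\CP^1$ is plausible by a light-bulb-style argument---the double slide provides exactly the homological flexibility needed to arrange that $\CP^1$ meets each $S^2 \times \pt$ slice of $V$ once, so crossings of a tube with the companion bands can be traded for crossings with the local $\CP^1$ and then erased---but verifying in detail that this really confines every move of the simplification to $V \# \CP^2$, without disrupting the $m$ parallel copies of the $K_C$ bands, will be the core technical step of the proof.
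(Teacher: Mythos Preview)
Your overall strategy matches the paper's: simplify the pattern inside $V\#\CP^2$ using the local $\CP^1$, so the companion structure is untouched and the three cases follow from the form of $O_m$. Where you differ is in how the localization obstruction is framed and resolved.

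You track the \emph{tubes and arcs} of the ribbon/$0$-concordance and worry about them exiting $V$, proposing to push them back with $M_3$ moves and the double slide. The paper instead applies Theorem~\ref{cp2theorem} directly to the banded unlink diagram of $K_P\#\CP^1$ sitting in the solid torus $W\subset S^3$, and observes that all the banded-link moves are local and hence stay in $W$; the only thing that must leave $W$ is the $+1$-framed attaching circle $\gamma$ of $\CP^1$, which must travel globally through $S^3$ between successive $M_3$ and twist-spun moves. Since $\gamma$ is null-homologous, it passes ``through the hole of $W$'' an even number of times with cancelling signs, and each such pair is exactly what the double slide accomplishes in $(S^2\times D^2)\#\CP^2$. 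So the paper's proof is a single sentence once the double slide is in hand: run Theorem~\ref{cp2theorem} in $W$ and replace each excursion of $\gamma$ by a double slide.

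Your version is not wrong, but it is less sharp: by focusing on the concordance data rather than on $\gamma$, you leave yourself the technical burden you flag in your last paragraph, whereas the paper's viewpoint dissolves that burden entirely---the banded link never has to leave $W$, and the homological parity argument for $\gamma$ is immediate. If you revise, reframe the obstruction as ``when does $\gamma$ cross $\partial W$'' rather than ``when do tubes cross $\partial V$''.
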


\begin{remark}
The pattern $O_m\subset V$ is well-defined. We have $V= S^4\setminus \nu(\gamma)$. where $\gamma$ is a curve in $S^4\setminus U_m$ with $[\gamma]=m\subset\Z\cong\pi_1(S^4\setminus U_m)$. In this dimension, $\gamma$ is unique up to isotopy, so $U_m\subset V$ is uniquely determined by $m$. See Figure~\ref{fig:mpattern} for a banded unlink diagram (with no bands) for $O_m\subset V$. Note that the satellite knot $K'$ is isotopic to $|m|$ parallel copies of $(m/|m|)K_C$ tubed together.
\end{remark}

\begin{figure}
\includegraphics[width=.75\textwidth]{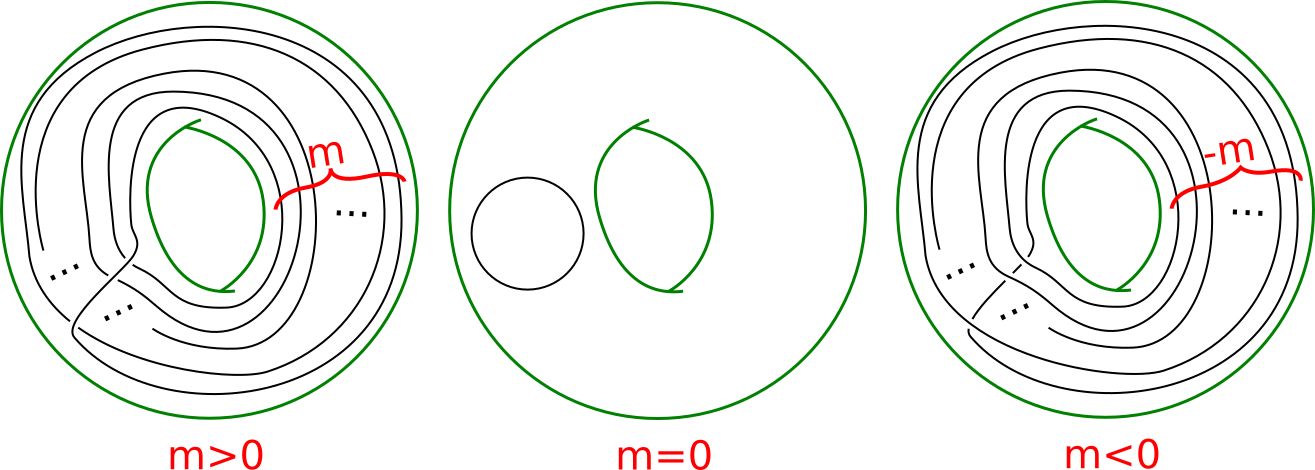}
\caption{A banded unlink diagram for $O_m\subset V$. In this diagram, $O_m$ has one local minima, one local maxima, and zero saddles (i.e. zero bands).}\label{fig:mpattern}. 
\end{figure}

%

\begin{proof}[Proof of Theorem~\ref{satellitetheorem}]
Let $F$ be the surface $K_P\#\CP^1\subset (S^2\times D^2)\#\CP^2$. The banded unlink diagram for $K_P\#\CP^1\subset\CP^2$ sits inside the solid torus $W\subset S^3$, with a $1$-framed $2$-handle attaching circle $\gamma$ at the site of the blowup. See Figure~\ref{fig:unknotpattern}. Apply the isotopy of Theorem~\ref{cp2theorem} to unknot the banded unlink diagram. So long as $\gamma$ stays in $W$, this isotopy induces isotopy on $F$ in $(S^2\times D^2)\#\CP^2$. But $\gamma$ passes outside of $W$ (``through the hole of $W$'') an even number of times -- that is, $\gamma$ appears to slide over a longitude of $W$ an even number of times; an equal number of each direction of slide. Achieve these slides through a sequence of double slide moves. See Figure~\ref{fig:unknotpattern}. This isotopy does not fix the standard $\CP^1$, but replaces $F$ with $U_m\#($standard $\CP^1)$.

Let $K'\subset S^4$ be the satellite knot with pattern $O_m$ and companion $K_C$. Then $K\#\CP^1$ is isotopic to $K'\#\CP^1$ in $\CP^2$. Note that if $m=0$, then $K'$ is the unknot. If $m=\pm 1$, then $K'=\pm K_C$.
\end{proof}

\begin{figure}
\includegraphics[width=.75\textwidth]{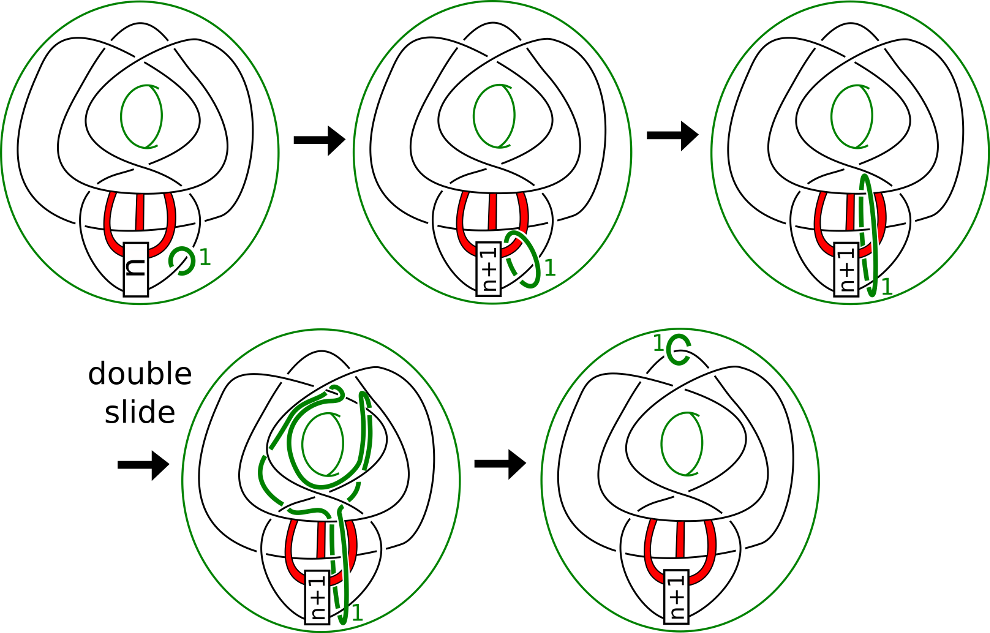}
\caption{We unknot $K_P\#\CP^1$ in $S^2\times D^2\#\CP^2$ when $K_P$ is $0$-concordant to a band-sum of twist-spun knots. We perform the isotopy of Theorem~\ref{cp2theorem}, performing the double slide move to slide $\CP^1\cap W$ over the longitude of $W$ and back.}\label{fig:unknotpattern}. 
\end{figure}

The following corollary follows immediately from Theorem~\ref{satellitetheorem} and the relation of unit surfaces to the Gluck twist from~\cite{melvin1977thesis}.
\begin{corollary}
Let $K$ be a satellite with companion $K_C$ and pattern $(K_P,V)$. Say $[K_p]=m[S^2\times \pt]$ in $H_2(V;\Z)$. View $V\subset S^4$ as a neighborhood of an unknotted $2$-sphere. Assume $K_P\subset V\subset S^4$ is $0$-concordant to a band-sum of twist-spun knots.

\begin{itemize}
\item If $m=0$, then the Gluck twist on $S^4$ about $K$ is diffeomorphic to $S^4$.
\item If $m=\pm 1$ and the Gluck twist on $S^4$ about $K_C$ yields $S^4$, then so does the Gluck twist on $S^4$ about $K$.
\end{itemize}
\end{corollary}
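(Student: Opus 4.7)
The plan is to deduce this corollary almost immediately from the combination of Theorem~\ref{satellitetheorem} and Melvin's equivalence, recalled in the introduction to Section~\ref{sec:cp2}: the Gluck twist of $S^4$ along an embedded $2$-sphere $S$ yields $S^4$ if and only if there is a diffeomorphism of pairs $(\CP^2, S\# \CP^1)\cong (\CP^2, \CP^1)$.

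For the first bullet ($m=0$), I would apply Theorem~\ref{satellitetheorem} directly to conclude that the unit surface $K\#\CP^1$ is smoothly isotopic to the standard $\CP^1$ inside $\CP^2$. Any such isotopy extends to an ambient diffeomorphism of $\CP^2$ carrying $K\#\CP^1$ to $\CP^1$, giving the pairwise diffeomorphism $(\CP^2, K\#\CP^1)\cong(\CP^2,\CP^1)$. Melvin's equivalence then translates this to the statement that the Gluck twist on $K$ produces $S^4$.

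For the second bullet ($m=\pm 1$), Theorem~\ref{satellitetheorem} provides an isotopy in $\CP^2$ from $K\#\CP^1$ to $\pm K_C\#\CP^1$, and in particular a pairwise diffeomorphism $(\CP^2, K\#\CP^1)\cong (\CP^2, \pm K_C\#\CP^1)$. Since the Gluck twist is an operation on unoriented $2$-spheres, reversing the orientation of $K_C$ does not affect the resulting manifold, so the hypothesis that the Gluck twist on $K_C$ yields $S^4$ is equivalent by Melvin to $(\CP^2, \pm K_C\#\CP^1)\cong (\CP^2, \CP^1)$. Composing with the diffeomorphism obtained from the ambient isotopy above yields $(\CP^2,K\#\CP^1)\cong(\CP^2,\CP^1)$, and applying Melvin's equivalence once more gives that the Gluck twist on $K$ also yields $S^4$.

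The only genuinely subtle point to handle is the orientation bookkeeping in the $m=\pm1$ case, namely justifying that the ``$\pm$'' ambiguity is harmless when one is only interested in the diffeomorphism type of the Gluck twist. I would address this explicitly in a short remark noting that the Gluck twist depends only on a tubular neighborhood of the unoriented sphere (two choices of framing exist, but reversing orientation just swaps these two surgeries, both of which are performed by the hypothesis). Beyond this, the proof is essentially a one-line deduction: apply Theorem~\ref{satellitetheorem} to produce isotopy of unit surfaces in $\CP^2$, then invoke Melvin to convert between unit-surface isotopy type and Gluck twist diffeomorphism type.
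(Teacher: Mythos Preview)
Your proposal is correct and matches the paper's approach exactly: the paper states that the corollary follows immediately from Theorem~\ref{satellitetheorem} together with Melvin's equivalence between Gluck twists on $S^4$ and unit surfaces in $\CP^2$, and that is precisely what you do. Your added remark on orientation in the $m=\pm1$ case is a reasonable clarification, though the paper does not bother to spell it out.
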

\label{endsec5}

\bibliographystyle{plain}
\bibliography{bibliography.bib}

\end{document}